\journal{J. Number Theory \textbf{227} (2021), pp. 30-93.}
\definecolor{darkgreen}{rgb}{0,0.4,0}
\definecolor{BrickRed}{rgb}{0.65,0.08,0}
\newtheorem{thm}{Theorem}
\newtheorem{prop}{Proposition}
\newtheorem{conjecture}{Conjecture}
\newtheorem{lemma}{Lemma}
\newtheorem{remark}{Remark}
\numberwithin{equation}{section}
\numberwithin{equation}{section}
\begin{document}

\begin{frontmatter}

\author[D. T. Nguyen]{David T. Nguyen}
\address{Department of Mathematics, South Hall, University of California, Santa Barbara, CA 93106.}
\emailauthor{David.Nguyen@math.ucsb.edu}{current: d.nguyen@queensu.ca}

\title{Generalized divisor functions in arithmetic progressions: I}

\begin{abstract}
	We prove some distribution results for the $k$-fold divisor function in arithmetic progressions to moduli that exceed the square-root of length $X$ of the sum, with appropriate constrains and averaging on the moduli, saving a power of $X$ from the trivial bound. On assuming the Generalized Riemann Hypothesis, we obtain uniform power saving error terms that are independent of $k$.
	
	\bigskip
	\noindent
	We follow and specialize Y.T. Zhang's method on bounded gaps between primes to our setting. Our arguments are essentially self-contained, with the exception on the use of Deligne's work on the Riemann Hypothesis for varieties over finite fields. In particular, we avoid the reliance on Siegel's theorem, leading to some effective estimates.
	\bigskip
	\bigskip
\end{abstract}

\begin{keyword}
	Divisor functions \sep
	equidistribution estimates \sep
	Bombieri-Vinogradov theorem \sep
	Elliott-Halberstam conjecture \sep
	Siegel-Walfisz theorem
\end{keyword}

\end{frontmatter}

\maketitle

\tableofcontents
\listoftables

\section{Introduction and statement of results}
Let $n\ge 1$ and $k\ge 1$ be integers. Let $\tau_k(n)$ denote the \textit{$k$-fold divisor function}
\begin{equation}
	\tau_k(n)=
	\sum_{n_1 n_2 \cdots n_k=n}1,
\end{equation}
where the sum runs over ordered $k$-tuples $(n_1,n_2,\dots,n_k)$ of positive integers for which $n_1 n_2 \cdots n_k=n$. Thus $\tau_k(n)$ is the coefficient of $n^{-s}$ in the Dirichlet series
\begin{equation}
	\zeta(s)^k = \sum_{n=1}^\infty \tau_k(n)n^{-s}.
\end{equation}
It is well known that the function $\tau_k$ is closely related to prime numbers. This paper is concerned with the distribution of $\tau_k(n)$ in arithmetic progressions to moduli $d$ that exceed the square-root of length of the sum, in particular, provides a sharpening of the result in \cite{WeiXueZhang2016}. We next give a brief background of the problem and present our main result.

\subsection{Survey and main result}

Towards the end of the 18th century, Gauss conjectured the celebrated Prime Number Theorem concerning the sum
\begin{equation}
	\sum_{p\le X} 1
\end{equation}
as $X$ approaches infinity, where $p$ denotes a prime. It is more convenience to count primes with weight $\log p$ instead of weight 1, c.f. Chebyshev; this leads to consideration of the sum
\begin{equation}
	\sum_{p\le X} \log p.
\end{equation}
To access the Riemann zeta function more conveniently we also count powers of primes, leading to the sum
\begin{equation}
	\sum_{\substack{p^\alpha \le X\\ \alpha\ge 1}} \log p,
\end{equation}
which is equal to the unconstrained sum over $n$
\begin{equation}
	\sum_{n\le X} \Lambda(n)
\end{equation}
where $\Lambda(n)$ is the von Mangoldt function--the coefficient of $n^{-s}$ in the series $-\zeta'(s)/\zeta(s)$. In 1837, Dirichlet considered the deep question of primes in arithmetic progression, leading him to consider sums of the form
\begin{equation} \label{eq:922}
	\sum_{\substack{n\le X\\ n\equiv a (\text{mod } d)}} \Lambda(n)
\end{equation}
for $(d,a)=1$. More generally, the function $\Lambda(n)$ is replaced by an arithmetic function $f(n)$, satisfying certain growth conditions, and we arrive at the study of the congruence sum
\begin{equation} \label{eq:1149}
	\sum_{\substack{n\le X\\ n\equiv a (\text{mod } d)}} f(n).
\end{equation}
This sum \eqref{eq:1149} is our main object of study. 

For most $f$ appearing in applications, it is expected that $f$ is distributed equally among the reduced residue classes $a (\text{mod } d)$ with $(a,d)=1$, e.g., that the sum \eqref{eq:1149} is well approximated by the average
\begin{equation} \label{eq:1234b}
	\frac{1}{\varphi(d)}
	\sum_{\substack{n\le X\\ (n,d)=1}}
	f(n)
\end{equation}
since there are $\varphi(d)$ reduced residue classes modulo $d$, where $\varphi(n)$ is the Euler's totient function. The quantity \eqref{eq:1234b} is often thought of as the `main term'. Different main terms are also considered. Thus, the study of \eqref{eq:1149} is reduced to studying the `error term'
\begin{equation} \label{eq:1152}
	\Delta(f; X, d , a) := 
	\sum_{\substack{n\le X\\ n\equiv a (\text{mod } d)}} f(n) - 
	\frac{1}{\varphi(d)}
	\sum_{\substack{n\le X\\ (n,d)=1}}
	f(n),\quad \text{for }(a,d)=1.
\end{equation}
measuring the discrepancy between the the sum \eqref{eq:1149} and the expected value \eqref{eq:1234b}. If $f$ satisfies
\begin{equation}
	f(n) \le C \tau^B(n) \log^B X
\end{equation}
for some constants $B,C>0$, which is often the case for most $f$ in applications, then a trivial bound for the discrepancy $\Delta(f; X, d , a)$ is
\begin{equation}
	\Delta(f; X, d , a) \le C' X \log^{B'} X,
\end{equation}
for some constants $B', C'>0$. The objective is then to obtain a non-trivial upper bound such as 
\begin{equation} \label{eq:nontrivialbound1}
	\Delta(f; X, d , a) \ll \frac{1}{\varphi(d)}
	\frac{X}{\log^A X},
	\quad A>0,
\end{equation}
or 
\begin{equation} \label{eq:nontrivialbound2}
	\Delta(f; X, d, a) \ll \frac{1}{\varphi(d)}
	X^{1-\delta},
	\quad 0< \delta < 1,
\end{equation}
with $d$ in a certain range depending on $X$.

For $f(n) = \Lambda(n)$, the von Mangoldt function, the clasical Siegel-Walfisz theorem implies that \eqref{eq:nontrivialbound1} holds uniformly in the range $d < \log^B X$, where $B>0$ with $A$ depending on $B$. For $f(n) = \tau_k(n)$, the estimate \eqref{eq:nontrivialbound2} is valid uniformly in the range $d\le X^{\theta_k - \epsilon}$, where the exponent of distribution $\theta_k$ are summarized in Table \ref{table:1}.
 
\begin{table}[h!]
	\caption{Only for $k=1,2,3$ is the exponent of distribution $\theta_k$ for $\tau_k(n)$ known to hold for a value larger than 1/2.}
	\label{table:1}
\begin{tabular}{ l l l }
	\hline \hline
	$\boldsymbol{k}$& $\boldsymbol{\theta_k}$& \textbf{References}\\
	\hline \hline
	$k=2$&	$\theta_2 = 2/3$& \text{Selberg, Linnik, Hooley (independently, unpublished, 1950's)};\\ 
	&&Heath-Brown (1979)
	\cite[Corollary 1, p. 409]{Heath-Brown1979}.
	\\
	$k=3$&	$\theta_3 = 1/2 + 1/230$& Friedlander and Iwaniec (1985) \cite[Theorem 5, p. 338]{FriedlanderIwaniec1985}.\\
	&	$\theta_3 = 1/2 + 1/82$& Heath-Brown (1986) \cite[Theorem 1, p. 31]{Heath-Brown1986}.\\
	&	$\theta_3 = 1/2+ 1/46$& Fouvry, Kowalski, and Michel (2015) \cite[Theorem 1.1, p. 122]{FouvryKowalskiMichel2015},\\
	&& (for prime moduli, polylog saving).\\
	\hline
	$k=4$&	$\theta_4 = 1/2$& Linnik (1961) \cite[Lemma 5, p. 197]{Linnik1961}.\\
	$k\ge4$&	$\theta_k = 8/(3k+4)$& Lavrik (1965) \cite[Teopema 1, p. 1232]{Lavrik1965}.\\
	$k=5$&	$\theta_5 = 9/20$& Friedlander and Iwaniec (1985) \cite[Theorem I, p. 273]{FriedlanderIwaniec1985a}.\\
	$k=6$&	$\theta_6 = 5/12$& Friedlander and Iwaniec (1985) \cite[Theorem II, p. 273]{FriedlanderIwaniec1985a}.\\
	$k\ge7$&	$\theta_k = 8/3k$& Friedlander and Iwaniec (1985) \cite[Theorem II, p. 273]{FriedlanderIwaniec1985a}.\\
	\hline
	$k\ge5$&	$\theta_k \ge 1/2$& Open.\\
	\hline \hline
\end{tabular}
\end{table}

In many problems in analytic number theory, it suffices to prove that \eqref{eq:nontrivialbound1} holds on average, in the sense that
\begin{equation} \label{eq:averagebound}
	\sum_{d\le X^{\theta - \epsilon}} \max_{(a,d)=1}
	|\Delta(f;X, d,a)|
	\ll \frac{X}{\log^A X}
\end{equation}
for any $\epsilon>0$, $A>0$, and some $0< \theta \le 1$. For $f(n) = \Lambda(n)$, one form of the celebrated Bombieri-Vinogradov Theorem \cite{Bombieri1965} \cite{Vinogradov1965} (1965) asserts that \eqref{eq:averagebound} holds with $\theta = 1/2$. By a general version of the Bombieri-Vinogradov theorem (see, e.g., \cite{Motohashi1976} or \cite{Wolke1973}), the bound \eqref{eq:averagebound} holds for a wide class of arithmetic functions, including $f(n) = \tau_k(n)$ for all $k$; see Table \ref{table:2} for a summary.

\begin{table}[h!] 
\caption{Known results for $\tau_k(n)$ averaged over moduli $d$, and references.}
\label{table:2}
\begin{tabular}{ l l l } 
	\hline \hline
	$\boldsymbol{k}$& $\boldsymbol{\theta_k}$& \textbf{References}\\
	\hline \hline
	$k=2$& $\theta_2=1$& Fouvry (1985) \cite[Corollaire 5, p. 74]{Fouvry1985} (exponential saving);\\
	&& Fouvry and Iwaniec (1992) \cite[Theorem 1, p. 272]{FouvryIwaniec1992}.\\
	\hline
	$k=3$&	$\theta_3 = 1/2 + 1/42$& Heath-Brown (1986) \cite[Theorem 2, p. 32]{Heath-Brown1986}.\\
	&	$\theta_3=1/2 + 1/34$& Fouvry, Kowalski, and Michel (2015) \cite[Theorem 1.2, p. 123]{FouvryKowalskiMichel2015},\\ 
	&&(for prime moduli, polylog saving).\\
	\hline
	$k\ge4$&	$\theta_k < 1/2$& Follows from the general version of Bombieri-Vinogradov theorem,\\ 
	&& see, e.g., \cite{Motohashi1976} or \cite{Wolke1973}, (polylog saving).\\
	\hline
	$k\ge4$&	$\theta_k \ge 1/2$& Open.\\
	\hline \hline
\end{tabular}
\end{table}

It is believed that \eqref{eq:averagebound} should hold with $\theta=1$ for a large class of function $f(n)$, including $\Lambda(n)$ and $\tau_k(n)$; however, going beyond $\theta>1/2$ proves to be very difficult. In the recent breakthrough work of Y. Zhang \cite{Zhang2014} on bounded gaps between primes, a crucial step is to show that, for any fixed $a\neq 0$,
\begin{equation}
	\sum_{\substack{d \in \mathcal{D}\\ d< X^{\frac{1}{2} + \frac{1}{584}}}}
	|\Delta(\Lambda; X, d,a)|
	\ll \frac{X}{\log^A X},
\end{equation}
where
\begin{equation}
	\mathcal{D}
	= \{(d,a) = 1, d\mid \prod_{p\le X^{1/1168}} p \}.
\end{equation}
In \cite{WeiXueZhang2016}, Wei, Xue, and Zhang showed that the methods of Zhang in \cite{Zhang2014} applies not only to the von Mangoldt function $\Lambda$, but also equally to the $k$-fold divisor function $\tau_k$. They proved in \cite[Theorem 1.1, p. 1664]{WeiXueZhang2016} that for any $k\ge 4$ and $a\neq 0$, we have
\begin{equation} \label{eq:1022}
	\sum_{\substack{d\in \mathcal{D}\\ d< X^{293/584}}}
	\mu(d)^2 
	\left|
	\Delta(\tau_k; X, d, a)
	\right|
	\ll X\exp (-\log^{1/2}X),
\end{equation}
where
\begin{equation} \label{eq:1019}
	\mathcal{D}
	=\{d\ge 1: (d,a)=1, (d,\prod_{p< X^{1/1168}}p) > X^{71/584} \},
\end{equation}
and the implied constant depends on $k$ and $a$. The condition on the moduli $d$ in \eqref{eq:1019} slightly allowing for $d$ to have some, but not too many, prime factors larger than $X^{1/1168}$. The error term and, more importantly, the exponent of distribution $\theta_k = 293/584 = 1/2+ 1/548$ in \eqref{eq:1022} hold uniformly in $k$. 

In the main result of this paper, we provide a sharpening of the error term in \eqref{eq:1022}, saving a power of $X$ from the trivial bound, with a constraint on the moduli $d$ not having too many very small prime factors. Actually, our arguments follow closely those of \cite{Zhang2014} in treating contribution coming from large moduli; see Section \ref{section:sketchOfProof} below for more discussion.

\begin{thm}[Main theorem]\label{thm:maintheoremtauk}
	Let 
	\begin{equation} \label{eq:varpi}
		\varpi = \frac{1}{1168}
	\end{equation}
	and
	\begin{equation} \label{eq:thetak}
		\theta_k = \min\left\{ \frac{1}{12(k+2)}, \varpi^2 \right\}.
	\end{equation}
	For $a\neq 0$, let 
	\begin{equation}
	\mathcal{D}
	=\{d\ge 1: (d,a)=1,\ |\mu(d)| =1,\ (d,\prod_{p\le X^{\varpi^2}}p) < X^\varpi,\
	\text{and}\
	(d, \prod_{p\le X^\varpi} p) > X^{71/584} \},
	\end{equation}
	where $\mu$ is the M\"obius function. Then for each $k\ge4$ we have
	\begin{equation}\label{eq:tauk}
	\sum_{\substack{d\in \mathcal{D}\\ d< X^{293/584}}}
	\left|
	\sum_{\substack{n\le X\\ n\equiv a (\text{mod } d)}} \tau_k(n) - 
		\frac{1}{\varphi(d)}
		\sum_{\substack{n\le X\\ (n,d)=1}}
		\tau_k(n)
	\right|
	\ll X^{1- \theta_k}.
	\end{equation}
	The implied constant is effective, and depends at most on $a$ and $k$.
\end{thm}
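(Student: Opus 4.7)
My plan is to transplant Zhang's method for primes to the $k$-fold divisor function. After dyadic decomposition of the range $d<X^{293/584}$, I would use a Heath-Brown combinatorial identity to write $\tau_k(n)$, for $n\le X$, as a sum of $O_k(1)$ Dirichlet convolutions of $k$ factors, with each factor either smooth or a one-sided M\"obius-type sum of known shape. Splitting the variable ranges dyadically $(N_j,2N_j]$ with $\prod_j N_j\asymp X$, each convolution can be grouped into a bilinear form $\alpha\ast\beta$ where $\alpha$ is supported on $(M,2M]$ and $\beta$ on $(N,2N]$. Depending on whether $M$ is small or both $M,N$ lie in a moderate range, one arrives at Type~I or Type~II sums, analogous to Zhang's treatment, and the task reduces to establishing power-saving for each type uniformly over the moduli in $\mathcal{D}$.

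\textbf{Factoring the modulus and Type~I.} The conditions defining $\mathcal{D}$ are chosen precisely so that every $d\in\mathcal{D}$ admits a factorization $d=qr$ with $q$ and $r$ in prescribed dyadic ranges, $r$ composed of primes in $(X^{\varpi^2},X^{\varpi}]$, and $r>X^{71/584}$. For Type~I sums, the inner sum over the long smooth variable $b$ is a smooth sum in an arithmetic progression modulo $qr$. I would complete it via additive characters, apply Poisson summation, and use the Chinese Remainder Theorem along the factorization $d=qr$ to split the resulting dual sum into one modulo $q$ and one modulo $r$. The Weil bound (via Deligne) on each factor, together with the flexibility to choose $r$ in a convenient range, delivers the power saving $O(X^{1-\eta})$ in the Type~I range; this is exactly where one beats the $X^{1/2}$ barrier, and the constraint $(d,\prod_{p\le X^\varpi}p)>X^{71/584}$ guarantees that the required balance between $q$ and $r$ is available.

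\textbf{Type~II and the main obstacle.} Type~II sums, in which both $M$ and $N$ are moderate, I would treat by Cauchy--Schwarz on the $\beta$-variable followed by the dispersion method. After opening the square one is left with a sum over pairs of moduli $(q_1 r_1,q_2 r_2)$ of an exponential sum whose cancellation again follows from Deligne's Riemann Hypothesis for varieties over finite fields. Summing over dyadic ranges and adding the Type~I contribution gives the claimed $X^{1-\theta_k}$ bound. The \textbf{main obstacle} is the simultaneous balancing of the four length parameters $(M,N,q,r)$: one must stay inside the Type~I range, the Type~II range, and the range where the algebraic exponential sums admit non-trivial cancellation, while ensuring that the final exponent of saving remains strictly positive. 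The factor $1/(12(k+2))$ in $\theta_k$ reflects this balancing for the $k$-fold Heath-Brown decomposition, since each additional convolution factor tightens the permissible ranges and costs a fraction of the available margin; the cap at $\varpi^2$ records the separate loss coming from the factorization structure of $d$. Verifying that the balancing conditions are satisfiable, and that the saving is effective (in particular independent of Siegel-type ineffectiveness), is the bulk of the work.
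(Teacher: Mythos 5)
Your high-level plan (decompose $\tau_k$, classify into bilinear types, factor $d=qr$, use dispersion and Weil/Deligne bounds) is in the right spirit, but there are several concrete gaps relative to what is actually required.

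First, you invoke a Heath-Brown combinatorial identity to decompose $\tau_k$. This is unnecessary and in fact counterproductive here: $\tau_k$ is already a $k$-fold convolution of the constant function $1$, so one can directly decompose it as $\chi_{N_1}\ast\cdots\ast\chi_{N_k}$ where each $\chi_{N_j}$ is the characteristic function of a short interval, with no M\"obius factors entering at all. The paper exploits precisely this feature: the absence of $\mu$ means no Siegel zero issue and hence an \emph{effective} Siegel--Walfisz substitute (the lemma bounding $\sum_{n\le X}\tau_k(n)\chi(n)$ for primitive $\chi$ of small modulus via the convexity bound for $L(s,\chi)^k$). If you run $\tau_k$ through a Heath-Brown identity you artificially introduce $\mu$-truncations and lose this effectivity advantage, which is one of the stated points of the theorem.

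Second, and more seriously, a dichotomy into Type~I and Type~II alone does not cover the decomposition. In Zhang's scheme, once one writes $\gamma=\chi_{N_1}\ast\cdots\ast\chi_{N_k}$ and classifies by the sizes $\nu_i=\log N_i/\log X$, there is an unavoidable third regime where no partial product of the $N_i$'s falls into the window $[X^{3/8+8\varpi},X^{5/8-8\varpi}]$; this forces three roughly balanced factors and leads to a \emph{trilinear} (Type~III) exponential sum, which is handled not by the Weil bound for Kloosterman sums but by the Birch--Bombieri bound for a three-variable complete exponential sum over $\mathbb{F}_q$ (a deeper input from Deligne). Your proposal omits this case entirely; without it the argument fails for those $\nu$-configurations, and they do occur for $k\ge 4$.

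Third, the paper handles moduli $d\le X^{1/2-1/12(k+1)}$ separately by a direct large-sieve argument (splitting $\tau_k=\tau_\ell\ast\tau_s$, Cauchy--Schwarz, and the multiplicative large sieve with the convexity bound for $L(s,\chi)^k$ controlling the tiny-conductor contribution). This is where the exponent $\tfrac{1}{12(k+2)}$ in $\theta_k$ actually originates, from the $k$-dependence of the convexity bound $|L(1/2+it,\chi)|^k\ll d^{k/4}|s|^k$. You attribute that exponent to the Heath-Brown decomposition balancing, which is not where it comes from; Zhang's dispersion machinery is only needed for the larger moduli $d>X^{1/2-1/12(k+1)}$.

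So the proposal is missing a case (Type~III / Birch--Bombieri), introduces an unnecessary and effectivity-destroying Heath-Brown step, and misattributes the source of the $k$-dependence in $\theta_k$.
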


\begin{remark}
	Theorem \ref{thm:maintheoremtauk} admits several refinements. The particular choice of $\varpi=1/1168$ comes from the condition (\refeq{eq:conditionvarpi}) and, while being uniform in $k$, it is not optimal. There are certain ways to improve the numerics in Theorem \ref{thm:maintheoremtauk}, for instance using the extensive work \cite{Polymath8} of the Polymath 8 project. Though we do not focus on this aspect here, it is an open problem to replace $\varpi$ and $\theta_k$ on the right side of (\ref{eq:tauk}) by values that are as large as possible.
\end{remark}

Conditionally, if we assume the Generalized Riemann Hypothesis, or the weaker Generalized Lindel\"of Hypothesis, for Dirichlet $L$-functions we can obtain a stronger result than \eqref{eq:tauk}.

\begin{thm} \label{thm:maintheoremtaukOnLindelof}
	On the Generalized Lindel\"of Hypothesis, the estimate \eqref{eq:tauk} holds with the right side replaced by
	\begin{equation}
	X^{1-\varpi^2},
	\end{equation}
	where the $\theta_k$ power saving is replaced by a positive constant independent of $k$.
\end{thm}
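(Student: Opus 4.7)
The plan is to follow the same decomposition used in the proof of Theorem~\ref{thm:maintheoremtauk} and to upgrade only those steps at which the unconditional exponent $\theta_k$ degrades with $k$, replacing convexity inputs for Dirichlet $L$-functions by the stronger Lindel\"of bound.

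First, I would apply the same Heath-Brown (or direct Dirichlet convolution) decomposition of $\tau_k$ used to prove Theorem~\ref{thm:maintheoremtauk}, splitting the sum on the left of \eqref{eq:tauk} into Type I, Type II, and Type III contributions. Because the constraints on the moduli $d\in \mathcal{D}$ are unchanged and the large-modulus analysis---Zhang's dispersion method together with the Weil/Deligne exponential sum estimates underlying the Type II and Type III treatment---is entirely independent of $k$, those contributions are already bounded by $O(X^{1-\varpi^2})$ without any hypothesis. These parts of the argument carry over verbatim and contribute exactly the saving claimed in Theorem~\ref{thm:maintheoremtaukOnLindelof}.

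The only step that degrades with $k$ is the one in which, after Perron inversion or a Mellin--Barnes contour shift, one is reduced to estimating a quantity of the form
$$\frac{1}{\varphi(d)}\sum_{\chi \bmod d}^{\ast}\,\frac{1}{2\pi}\int_{-T}^{T} \Bigl| \prod_{j=1}^{k} L\bigl(\tfrac{1}{2}+it,\chi_j\bigr)\Bigr|\,dt,$$
summed over the remaining moduli $d<X^{293/584}$. Unconditionally, convexity gives $L(\tfrac{1}{2}+it,\chi)\ll (d(1+|t|))^{1/4+\epsilon}$, so the product inherits a factor $(dT)^{k/4+\epsilon}$; balancing this against the length of the residue class is what produces the $k$-sensitive saving $X^{-1/(12(k+2))}$ in the definition \eqref{eq:thetak} of $\theta_k$. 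Under the Generalized Lindel\"of Hypothesis each factor is $\ll_\epsilon (d(1+|t|))^\epsilon$ uniformly in $\chi$, so the product is $\ll_{k,\epsilon} (dT)^{k\epsilon}$. Choosing $\epsilon = \epsilon(k)$ sufficiently small (see the obstacle below), shifting the line of integration to $\Re s = \tfrac{1}{2}$, and invoking the standard large sieve to handle the outer sum over $d$ and $\chi$ then returns a contribution of size $X^{1-\varpi^2}$ which is independent of $k$.

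The main obstacle is purely quantitative bookkeeping: one must ensure that the accumulated loss $(dT)^{k\epsilon}$ is absorbed into the desired saving $X^{-\varpi^2}$ uniformly throughout the permissible range $d<X^{293/584}$ and $T$ polylogarithmic in $X$. This is achieved by choosing $\epsilon = \varpi^2/(Ck \log X)$ for a suitably large absolute constant $C$, so that $(dT)^{k\epsilon} \ll X^{\varpi^2/2}$ in all relevant ranges. Once this choice is inserted, the $k$-dependent term $1/(12(k+2))$ in \eqref{eq:thetak} is no longer binding, and the uniform exponent $\varpi^2$ asserted in Theorem~\ref{thm:maintheoremtaukOnLindelof} is recovered. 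Since the Generalized Lindel\"of Hypothesis is a consequence of GRH, the same argument yields the conclusion under GRH as well.
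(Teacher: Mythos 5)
Your overall plan — keep the unconditional treatment of the large moduli and use the Lindel\"of Hypothesis only to remove the $k$-dependence that enters through the small-moduli / Siegel--Walfisz-type estimate — is exactly the strategy the paper uses. The paper implements the small-moduli upgrade through Proposition~\ref{prop:SWOnLindelof}, which replaces Lemma~\ref{lemma:tinymoduli} by the $k$-uniform bound $\sum_{n\le X}\tau_k(n)\chi(n)\ll X^{7/8}d^{1/2}$ and then reruns the large-sieve argument of Proposition~\ref{prop:mediummoduli}. So you have found the right idea. There are, however, two technical defects in the way you propose to carry out the bookkeeping.

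First, the choice $\epsilon = \varpi^2/(Ck\log X)$ is not legitimate under the Generalized Lindel\"of Hypothesis alone. The bound $L(\tfrac12+it,\chi)\ll_\epsilon (d(1+|t|))^\epsilon$ has an implied constant that depends on $\epsilon$ and is a priori uncontrolled as $\epsilon\to 0$; letting $\epsilon$ shrink with $X$ would require a uniform or effective form of Lindel\"of (such as the explicit bound one can extract from GRH), which you have not invoked. The resolution is much simpler and is what the paper actually does: fix $\epsilon$ as a constant depending only on $k$, say $\epsilon = 1/(2k)$, so that $L(s,\chi)^k\ll (d|s|)^{1/2}$ with an implied constant depending on $k$ only. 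Since $k$ is fixed throughout, this $k$-dependence in the constant is entirely harmless, and the exponent of the saving becomes independent of $k$.

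Second, the remark that $T$ is ``polylogarithmic in $X$'' does not fit the argument. After truncating Perron's formula one picks up an error of size $X^{1+\epsilon}/T$, so $T$ must be taken to be a positive power of $X$ for that error to be absorbed into the claimed power saving. The paper takes $T=X^{1/2}$ in the proof of Proposition~\ref{prop:SWOnLindelof}, balancing the horizontal, vertical, and truncation contributions to obtain the clean bound $X^{7/8}d^{1/2}$. Once these two points are corrected (fixed $\epsilon=\epsilon(k)$ and $T$ a power of $X$), the rest of your proposal agrees with the paper: the large-moduli contribution is covered by the unconditional Proposition~\ref{prop:2}, the small-moduli contribution follows from the strengthened character-sum estimate plus the large sieve, and the GRH deduction is immediate since GRH implies GLH.
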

\noindent
This uniform power saving is the result of sharper estimates of $L(s,\chi)^k$ on the critical line that are independent of $k$. We next present two results when we are allowed to take an extra averaging over the residue classes $a (\text{mod } d)$.

\subsection{Results on further averaging}

In a function field variant, the work of Keating, Rodgers, Roditty-Gershon, and Rudnick in \cite{KeatingRodgersRoditty-GershonRudnick2018} leads Rodgers and Soundararajan \cite[Conjecture 1]{RodgersSoundararajan2018} to the following conjecture over the integers for the variance of $\tau_k$.

\begin{conjecture}
	\label{conj:3}
	For $X,d\to \infty$ such that $\log X/\log d \to c \in (0,k)$, we have
	\begin{equation} \label{eq:2238}
		\sum_{\substack{a=1\\ (a,d)=1}}^d
		\Delta(\tau_k; X, d,a)^2
		\sim a_k(d) \gamma_k(c) X (\log d)^{k^2-1},
	\end{equation}
	where $a_k(d)$ is the arithmetic constant
	\begin{equation}
		a_k(d) = \lim_{s\to 1^+}
		(s-1)^{k^2}
		\sum_{\substack{n=1\\ (n,d)=1}}^\infty
		\frac{\tau_k(n)^2}{n^s},
	\end{equation}	
	 and $\gamma_k(c)$ is a piecewise polynomial of degree $k^2 - 1$ defined by
	 \begin{equation}
	 	\gamma_k(c) = \frac{1}{k! G(k+1)^2}
	 	\int_{[0,1]^k}
	 	\delta_c (w_1 + \cdots w_k)
	 	\Delta(w)^2 d^kw,
	 \end{equation}
	 where $\delta_c(x) = \delta(x-c)$ is a Dirac delta function centered at $c$, $\Delta(w) = \prod_{i<j} (w_i - w_j)$ is a Vandermonde determinant, and $G$ is the Barnes $G$-function, so that in particular $G(k + 1) =
	 (k - 1)! (k - 2)! \cdots 1!$.
\end{conjecture}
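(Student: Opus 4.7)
The natural first move is to apply character orthogonality: writing the indicator of $n \equiv a\,(\text{mod }d)$ in terms of Dirichlet characters mod $d$ and expanding the square transforms the variance into
\begin{equation}
\sum_{\substack{a=1\\ (a,d)=1}}^d \Delta(\tau_k;X,d,a)^2 = \frac{1}{\varphi(d)}\sum_{\chi \neq \chi_0} \left| \sum_{n\le X} \tau_k(n)\chi(n) \right|^2.
\end{equation}
The problem thereby reduces to controlling, in mean square over nontrivial $\chi$ mod $d$, the twisted partial sums $S(X,\chi) := \sum_{n\le X} \tau_k(n)\chi(n)$.

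To access $S(X,\chi)$ I would introduce a smooth cutoff and represent it as a Mellin-Barnes integral $S(X,\chi) = (2\pi i)^{-1}\int_{(2)} L(s,\chi)^k \widehat{w}(s) X^s\, ds$. Since $L(s,\chi)^k$ is entire for $\chi\neq\chi_0$, the contour can be pushed past the critical line. Opening the modulus-squared and performing the character sum on the inside, the variance becomes, up to acceptable lower-order terms,
\begin{equation}
\frac{X}{\varphi(d)}\int\!\!\int \widehat{w}(\tfrac{1}{2}+it_1)\,\overline{\widehat{w}(\tfrac{1}{2}+it_2)}\, X^{i(t_1-t_2)}\, M_k(d;t_1,t_2)\, dt_1\, dt_2,
\end{equation}
where $M_k(d;t_1,t_2) = \sum_{\chi\neq\chi_0} L(\tfrac{1}{2}+it_1,\chi)^k\, \overline{L(\tfrac{1}{2}+it_2,\chi)^k}$ is a shifted $2k$-th moment of Dirichlet $L$-functions in the $q$-aspect.

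The crucial input is then an asymptotic for $M_k(d;t_1,t_2)$ of the precise form predicted by the Conrey-Farmer-Keating-Rubinstein-Snaith (CFKRS) recipe, valid uniformly over shifts of size $O(\log d)$. Substituting the CFKRS expression and expanding its characteristic product-of-zeta-values structure introduces a factor $(\log d)^{k^2}$; collapsing the shift integrals by residues and using the phase $X^{i(t_1-t_2)}$ to enforce the constraint $w_1+\cdots+w_k = c$ should, after a short combinatorial manipulation, identify the Vandermonde-type integral defining $\gamma_k(c)$, with one factor of $\log d$ absorbed by the delta constraint so as to produce the predicted exponent $k^2-1$. The arithmetic factor $a_k(d)$ emerges from tracking local Euler factors at primes dividing $d$.

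The principal obstacle, and the reason this remains open for $k\ge 3$, is that the CFKRS moment asymptotics in the $q$-aspect are rigorously known only for $k=1,2$; for $k\ge 3$ even the leading constant $c_k(\log d)^{k^2}$ is conjectural. One furthermore needs the moments uniformly in shifts with power-saving error, substantially stronger than any current unconditional estimate. A secondary obstacle is the combinatorial identity that converts the CFKRS polynomial expansion into the specific integral $\gamma_k(c)$: the formal match with the function-field calculation of \cite{KeatingRodgersRoditty-GershonRudnick2018} is suggestive but a rigorous derivation in the number-field setting is not yet available beyond small $k$ and restricted ranges of $c$. Any realistic line of attack will therefore produce only partial progress, contingent on advances toward CFKRS.
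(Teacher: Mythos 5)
What you were asked to prove is stated in the paper as a \emph{conjecture}, attributed to Rodgers and Soundararajan \cite{RodgersSoundararajan2018} (their Conjecture~1) and motivated by the function-field computation of \cite{KeatingRodgersRoditty-GershonRudnick2018}. The paper offers no proof of it; there is nothing to compare your argument against, and you correctly recognize the open status rather than claiming a proof. Your first reduction,
\begin{equation}
\sum_{\substack{a=1\\ (a,d)=1}}^{d} \Delta(\tau_k;X,d,a)^2 = \frac{1}{\varphi(d)}\ \sideset{}{'}\sum_{\chi (\text{mod } d)} \left|\sum_{n\le X}\tau_k(n)\chi(n)\right|^2,
\end{equation}
is exactly the orthogonality identity the paper does deploy, but only in the proof of Theorem~\ref{thm:meanSquareResult}: there it is paired with the multiplicative large sieve and yields an upper bound of the conjectured order $X(\log X)^{k^2-1}$, not the asymptotic. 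Beyond that point your sketch is the standard heuristic route (Mellin contour to $\sigma=\tfrac12$, opening the square to reach the shifted $2k$-th moment $\sum_{\chi\neq\chi_0} L(\tfrac12+it_1,\chi)^k\,\overline{L(\tfrac12+it_2,\chi)^k}$, then invoking the CFKRS prediction to extract $a_k(d)$ and the Vandermonde integral $\gamma_k(c)$). You are also right about where it breaks down: $q$-aspect moment asymptotics of the required shape are rigorously available only for $k\le 2$, and even then without the uniformity in shifts that the contour calculus would demand. So there is no error to flag in your reasoning, but it is, by your own framing, a heuristic outline of why the conjecture should hold rather than a proof, and the paper treats the statement the same way.
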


This conjecture is closely related to the problem of moments of Dirichlet $L$-functions and correlation of divisor sums; see, e.g, \cite{ConreyGonnek2002} and \cite{ConreyKeatingI, ConreyKeatingII, ConreyKeatingIII, ConreyKeatingIV}. In \cite{RodgersSoundararajan2018} Rodgers and Soundararajan consider smoothed sums of $\tau_k$ averaging over both $a$ and $d$, and confirm an averaged version of Conjecture \ref{conj:3} for a restricted range. Harper and Soundararajan in \cite{HarperSound} obtained a lower bound of the right order of magnitude for the average of this variance. By using the multiplicative large sieve inequality, we show that an upper bound of the same order of magnitude for this averaged variance holds.

\begin{thm} \label{thm:meanSquareResult}
	For $k\ge 4$ we have
	\begin{equation} \label{eq:meanSquareResult}
	\sum_{d\le D}
	\sum_{\substack{a=1\\ (a,d)=1}}^d
	\Delta({{\tau_k}; X, d,a})^2
	\ll
	(D + X^{1-1/6(k+2)}) X (\log X)^{k^2-1}.
	\end{equation}
\end{thm}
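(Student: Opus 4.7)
The plan is to pass from the residue class sum to a sum over Dirichlet characters via orthogonality, reduce to primitive characters, and then combine the multiplicative large sieve inequality with an individual analytic bound in the small conductor regime.

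First, for each fixed $d$, orthogonality of the characters modulo $d$ gives
\[
\sum_{\substack{a=1 \\ (a,d)=1}}^{d}\big|\Delta(\tau_k;X,d,a)\big|^2
\;=\; \frac{1}{\varphi(d)}\sum_{\chi\neq\chi_0\,(\text{mod } d)}|S(\chi)|^2, \qquad S(\chi):=\sum_{n\le X}\tau_k(n)\chi(n).
\]
Every nonprincipal character $\chi$ modulo $d$ is induced by a unique primitive character $\chi^{*}$ of some conductor $q\mid d$ with $q>1$; a standard M\"obius-sieve argument bounds $|S(\chi)|^2$ by $|T(\chi^{*})|^2$ up to negligible terms, where $T(\chi^{*}):=\sum_{n\le X}\tau_k(n)\chi^{*}(n)$. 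Swapping summations and using the elementary estimate $\sum_{d\le D,\,q\mid d} 1/\varphi(d) \ll (\log D)/q$ reduces the problem to controlling
\[
(\log D)\sum_{1<q\le D}\frac{1}{q}\sum_{\chi\,(\text{mod } q)}^{*}|T(\chi)|^2,
\]
where the starred sum runs over primitive characters modulo $q$.

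Next, I would split at the threshold $Q_0 := X^{1/(6(k+2))}$. For $q\le Q_0$, I would establish an individual analytic bound on $|T(\chi)|$: writing $T(\chi) = \frac{1}{2\pi i}\int L(s,\chi)^k X^s/s\,ds$ and shifting the contour to the line $\Re s = 1/2$ (permissible since $\chi$ is nonprincipal, making $L(s,\chi)^k$ entire), then truncating at height $T$ and applying the convexity bound $|L(1/2+it,\chi)| \ll (q(1+|t|))^{1/4+\epsilon}$; optimization in $T$ yields
\[
|T(\chi)| \ll X^{(k+2)/(k+4)+\epsilon}\, q^{k/(k+4)+\epsilon}.
\]
Inserted into the reduced sum, the small-$q$ contribution is at most $O(X^{2-1/(6(k+2))+\epsilon})$, as a short exponent computation confirms. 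For $Q_0<q\le D$, I would apply the multiplicative large sieve inequality
\[
\sum_{q\le Q}\frac{q}{\varphi(q)}\sum_{\chi\,(\text{mod } q)}^{*}\Big|\sum_{n\le X} a_n\chi(n)\Big|^2 \;\le\; (Q^2+X)\sum_{n\le X}|a_n|^2
\]
with $a_n=\tau_k(n)$, combined with $\sum_{n\le X}\tau_k(n)^2 \ll X(\log X)^{k^2-1}$. A dyadic decomposition in $q$ plus partial summation (to convert the weight $q/\varphi(q)$ into $1/q$) yields a contribution of size $(D+X/Q_0)\,X(\log X)^{k^2-1}=(D+X^{1-1/(6(k+2))})\,X(\log X)^{k^2-1}$, matching the claim.

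The main technical obstacle is the calibration at the threshold: verifying that the choice $Q_0 = X^{1/(6(k+2))}$ simultaneously fits the small-$q$ convexity estimate and the large-$q$ large sieve estimate within the stated bound. A secondary difficulty is handling the discrepancy between $|S(\chi)|^2$ (with the coprimality condition $(n,d)=1$) and $|T(\chi^{*})|^2$ in the reduction to primitive characters, and ensuring that the extra logarithmic factors from the dyadic summation and from the $d$-sum above combine cleanly into the clean power $(\log X)^{k^2-1}$ appearing in the statement.
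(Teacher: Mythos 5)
Your proposal follows essentially the same route as the paper's proof: orthogonality in $a$ converts the variance to $\frac{1}{\varphi(d)}\sum_{\chi\neq\chi_0}|S(\chi)|^2$, Lemma \ref{lemma:primitivesum} reduces to primitive characters, and the conductor range is split so that the multiplicative large sieve (Lemma \ref{lemma:largesievemultiplicative}) together with Shiu's bound $\sum_{n\le X}\tau_k(n)^2\ll X(\log X)^{k^2-1}$ (Lemma \ref{lemma:8}) controls the large conductors, while an individual analytic estimate handles the small ones. The only substantive deviation is the small-conductor bound: the paper invokes Lemma \ref{lemma:tinymoduli}, which uses a smoothed Mellin transform (a $C^\infty$ cutoff, avoiding Perron truncation error) and a threshold of $X^{1/(3(k+2))}$ to get the $k$-uniform estimate $\sum_{n\le X}\tau_k(n)\chi(n)\ll X^{1-1/(3(k+2))}$, whereas you propose a truncated sharp Perron formula with threshold $X^{1/(6(k+2))}$ and the convexity bound $|T(\chi)|\ll X^{(k+2)/(k+4)+\epsilon}q^{k/(k+4)+\epsilon}$. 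Your exponent computation is correct (the small-$q$ contribution is $\ll X^{2-1/(6(k+2))+\epsilon}$, with an extra $X^{20/(6(k+4))}$ of slack absorbing the $\epsilon$), so either variant of the small-conductor estimate combined with the large sieve yields the stated bound; the smoothed Mellin approach of the paper is merely slightly cleaner in that it avoids having to track the Perron error term.
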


This result is of Barban-Davenport-Halberstam type. In forthcoming work \cite{NguyenII}, we replace the upper bound in \eqref{eq:meanSquareResult} by an asymptotic equality for the ternary divisor function $\tau_3(n)$ with the condition $(a,d)=1$ removed.

Lastly, motivated by the recent work \cite{HeathBrownLi2017} of Heath-Brown and Li in 2017, we also prove analogous bilinear estimates over hyperbolas $m\equiv an\ (\text{mod } d)$ for pairs of $\tau_k(n)$'s and $\tau_k(n)\Lambda(n)$ to moduli $d$ that can taken to be almost as large as $X^{2-\epsilon}$.

\begin{thm} \label{thm:pairOfTau_k}
	For $k\ge 4$ and any $\epsilon>0$ there holds
	\begin{equation} \label{eq:pairOfTau_k}
	\sum_{d\le D}\
	\sum_{\substack{a=1\\ (a,d)=1}}^d
	\left( \sum_{\substack{m,n\le X\\ m\equiv an ( \text{mod }d)}}
	\tau_k(m) \tau_k(n)
	- \frac{1}{\varphi(d)}
	\left(\sum_{\substack{n\le X\\ (n,d)=1}} \tau_k(n)\right)^2 \right)^2
	\ll X^{4- 1/3(k+4)}
	\end{equation}
	for any $D\le X^{2-1/3(k+2)}$.
	
	In particular, the above estimate is valid if one of the $\tau_k$ is replaced by the von Mangoldt function $\Lambda$. We have
	\begin{equation} \label{eq:pairOfTau_k2}
	\sum_{d\le D}\
	\sum_{\substack{a=1\\ (a,d)=1}}^d
	\left( \sum_{\substack{m,n\le X\\ m\equiv an (\text{mod } d)}}
	\tau_k(m) \Lambda(n)
	- \frac{X}{\varphi(d)}
	\sum_{\substack{n\le X\\ (n,d)=1}} \tau_k(n) \right)^2
	\ll X^{4- 1/3(k+4)}
	\end{equation}
	for any $D\le X^{2-1/3(k+2)}$.
\end{thm}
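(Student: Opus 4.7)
The plan is to apply Dirichlet character orthogonality to convert the hyperbolic congruence $m \equiv an \pmod d$ into a sum over characters $\chi \pmod d$, reducing the variance to a fourth moment of twisted divisor sums $T_\chi(X) := \sum_{n \le X,\,(n,d)=1} \tau_k(n) \chi(n)$. This fourth moment will then be bounded by combining the multiplicative large sieve inequality with the convexity bound for $L(s,\chi)^k$ on the critical line.

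For $(a,d) = 1$ and $(mn,d) = 1$, orthogonality gives $\mathbf{1}_{m \equiv an \pmod d} = \varphi(d)^{-1} \sum_{\chi \pmod d} \chi(m) \bar\chi(an)$; the principal character reproduces the main term of \eqref{eq:pairOfTau_k} and the remainder becomes $E(d,a) = \varphi(d)^{-1} \sum_{\chi \ne \chi_0} \bar\chi(a) |T_\chi(X)|^2$ plus a boundary contribution from tuples with $(mn,d) > 1$ that is easily shown to be negligible. A second use of orthogonality in $a$ then yields
$$\sum_{\substack{a = 1 \\ (a,d) = 1}}^{d} |E(d,a)|^2 = \frac{1}{\varphi(d)} \sum_{\chi \ne \chi_0 \pmod d} |T_\chi(X)|^4.$$
Summing over $d \le D$ and writing each non-principal $\chi \pmod d$ as induced from a primitive $\chi^\ast$ of unique conductor $q \mid d$, $q > 1$, the bound $\sum_{q \mid d \le D} 1/\varphi(d) \ll (\log D)/\varphi(q)$ reduces matters to estimating
$$S := \sum_{q \le D} \frac{1}{\varphi(q)} \sideset{}{^\ast}\sum_{\chi \pmod q} |T_{\chi^\ast}(X)|^4.$$

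The key identity is
$$|T_{\chi^\ast}(X)|^4 = \bigl|T_{\chi^\ast}(X)^2\bigr|^2 = \Bigl|\sum_{N \le X^2} c(N) \chi^\ast(N)\Bigr|^2, \qquad c(N) := \sum_{\substack{mn = N \\ m, n \le X}} \tau_k(m) \tau_k(n) \le \tau_{2k}(N),$$
with $\sum_{N \le X^2} c(N)^2 \ll X^2 (\log X)^{O(1)}$. The multiplicative large sieve applied to the single Dirichlet polynomial $T_{\chi^\ast}^2$ gives $\sum_{q \sim Q'} \sideset{}{^\ast}\sum_\chi |T_{\chi^\ast}^2|^2 \ll (Q'^2 + X^2) X^2 (\log X)^{O(1)}$, so after dividing by $\varphi(q) \asymp Q'$ and summing dyadically the range $X < Q' \le D$ contributes $\ll DX^2 \le X^{4 - 1/3(k+2)} \le X^{4 - 1/3(k+4)}$, which is already acceptable. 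In the complementary range $Q' \le X$ the large sieve alone yields only the trivial bound $X^4$; we therefore supplement it with the pointwise convexity estimate
$$|T_{\chi^\ast}(X)| \ll X^{(k+2)/(k+4)} q^{k/(k+4) + \epsilon},$$
derived from Perron's formula shifted to $\operatorname{Re}(s) = 1/2$ and the convexity bound $|L(1/2 + it, \chi^\ast)| \ll (q(|t|+1))^{1/4 + \epsilon}$. Using convexity for $q \le Q_\ast := X^{8/(5k+4)}$ and the large sieve for $q > Q_\ast$, the two contributions in the range $q \le X$ balance at $X^{4 - 8/(5k+4)}$, which is again strictly below the target.

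The second statement \eqref{eq:pairOfTau_k2} is proved by the same scheme with $\Psi_\chi(X) := \sum_{n \le X} \Lambda(n) \chi(n)$ replacing one factor of $T_\chi$. The cross product $T_\chi \overline{\Psi_\chi}$ is again a Dirichlet polynomial $\sum_{N \le X^2} c'(N) \chi(N)$ with $|c'(N)| \le (\log X) \tau_{k+1}(N)$ and mean-square of the same order $X^2 (\log X)^{O(1)}$, while the pointwise bound for $\Psi_{\chi^\ast}$ follows from applying Perron to $-L'(s, \chi^\ast)/L(s, \chi^\ast)$ together with standard zero-free region estimates. The principal technical obstacle is the accurate bookkeeping of the discrepancy between $T_\chi$ and $T_{\chi^\ast}$ (and of the boundary contributions from $(mn,d) > 1$), which is handled by splitting on the largest prime factor of $d/q$ in exactly the same manner as in the proof of Theorem \ref{thm:meanSquareResult}.
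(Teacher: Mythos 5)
Your proposal follows essentially the same route as the paper: orthogonality in $m,n$ and then in $a$ reduces the variance to $\sum_d \varphi(d)^{-1} \sum'_{\chi(d)} |T_\chi|^4$, which is then bounded by reducing to primitive characters, using a Perron/convexity estimate for small conductors, and applying the multiplicative large sieve to the length-$X^2$ Dirichlet polynomial $T_\chi^2$ for the remaining conductors; your ``key identity'' is a cleaner and more explicit statement of the large-sieve step that the paper carries out implicitly. Two small caveats: your balance point is miscalculated (with $\sum_{q\le Q_\ast} q^{4k/(k+4)} \asymp Q_\ast^{(5k+4)/(k+4)}$, setting the two contributions equal forces $Q_\ast^{(6k+8)/(k+4)} = X^{8/(k+4)}$, i.e.\ $Q_\ast = X^{4/(3k+4)}$ and a bound $X^{4-4/(3k+4)}$, not $X^{4-8/(5k+4)}$ --- still comfortably below the target, so the strategy is unharmed); and for \eqref{eq:pairOfTau_k2} the paper just uses the trivial bound $\Psi_\chi(X)\ll X$, which suffices and avoids the Siegel-zero subtleties that would enter if you truly invoked ``zero-free region estimates'' for $L(s,\chi^\ast)$ at small conductor.
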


It might look surprising at first that the moduli in Theorems \ref{thm:pairOfTau_k} can be taken almost as large as $X^2$, but proof is in fact rather simple; the proof of Theorem \ref{thm:pairOfTau_k} follows essentially also from the large sieve inequality. 

Assuming the  Generalized Riemann Hypothesis, it might be possible to show that the estimates \eqref{eq:pairOfTau_k} and \eqref{eq:pairOfTau_k2} hold in a larger range for $d$ with right side replaced by
	\begin{equation}
	\begin{cases}
	X^{2-\delta}, & \text{for } 1\le D\le X^{1+\epsilon},\\
	X^2 D (\log X)^{k^2}, & \text{for } X^{1+\epsilon} < D \le X^2,
	\end{cases}
	\end{equation}
	for some constant $\delta>0$.
We note that the moduli $d$ in Theorems \ref{thm:meanSquareResult} and \ref{thm:pairOfTau_k} need not be smooth as in Theorems \ref{thm:maintheoremtauk} and \ref{thm:maintheoremtaukOnLindelof}.

\subsection{Acknowledgments}
I wish to express gratitude to my Ph.D. advisor Zhang YiTang for introducing me to this problem, and for his guidance and numerous encouragements. I would also like to thank the referee for their careful reading and very helpful suggestions which greatly improves the presentation of the paper.

Additionally, I am grateful to M. Ram Murty, Matthew Welsh, Carl Pomerance, Kim SungJin, Mits Kobayashi, and Garo Sarajian for helpful mathematical conversations. I'd also like to thank Birge Huisgen-Zimmermann, Jeff Stopple, and Brad Rodgers for their feedbacks and interests in this project. Further thanks to Hector Ceniceros, Dave Morrison, Mihai Putinar, Alan Krinik, Eugene Lipovetsky, Kai S. Lam, Ester Trujillo and the UCSB Graduate Scholars Program for their support in the early stage. Lastly I acknowledge the Mathematics department at UCSB, in particular Medina Price, my office mates and neighbors for comfortable working environment leading up to completion of this paper.

\section{Notation and sketch of proof}

\subsection{Notation}

$\mathbb{N} = \{1,2,3,\dots\}$.

$p$--a prime number.

$a,b,c$--integers.

$d,n,m, k, q,r,s, Q, R$--positive integers.

$\Lambda(q)$--the von Mangoldt function.

$\tau_k(q)$--the $k$-fold divisor function; $\tau_2(q) = \tau(q).$

$\varphi(n)$--the Euler's totient function.

$s=\sigma+ it$

$X$--a large real number.

$\mathcal{L} = \log X$.

$\chi(n)$--a Dirichlet character.

$e(y)$--the additive character $\exp\{2\pi i y\}.$

$e_d(y) := \exp\{2\pi i y/d\}.$

$\hat{f}$--the Fourier transform of $f$, i.e.,
$$\hat{f}(z) = \int_{-\infty}^\infty f(y) e(yz) dy.
$$

$m\equiv a (q)$ means $m\equiv a (\text{mod } q).$

$q\sim Q$ means $Q\le q < 2Q.$

$\epsilon$--any sufficiently small, positive constant, not necessarily the same in each occurrence.

$B$--some positive constant, not necessarily the same in each occurrence.

$\| \alpha \|$--means the $L^2$ norm of $\alpha=(\alpha(m))$, i.e., $$\| \alpha \| = \left(\sum_m |\alpha(m)|^2\right)^{1/2}.$$

$\chi_{N}$--the characteristic function of the subset $[N, (1+\rho) N)\subset\mathbb{R}$.

$\displaystyle\sideset{}{'}\sum_{\chi (\text{mod } d)}$--means a summation over nonprincipal characters $\chi (\text{mod } d).$

$\displaystyle\sideset{}{^*}\sum_{\chi (\text{mod } d)}$--means a summation over primitive characters $\chi (\text{mod } d).$

$\displaystyle\sum_{b (\text{mod } q)}$--means
$\displaystyle\sum_{b=1}^q$.

$\displaystyle\sideset{}{^*}\sum_{b (\text{mod } q)}$--means
$ \displaystyle\sum_{\substack{b=1\\ (b,q)=1}}^q$.

\begin{table}[h!]
	\caption{Table of parameters and their first appearance.}
	\label{table:3}
	\begin{tabular}{ l c}
		\hline \hline
		Parameters& First apprearance\\
		\hline \hline
		$\varpi = 1/1168$& \eqref{eq:varpi}\\
		$\theta_k = \min\left\{ \frac{1}{12(k+2)}, \varpi^2 \right\}$& \eqref{eq:thetak}\\
		$Q_0 = X^{1/12(k+1)}$& \eqref{eq:Q0}\\
		$D_0 = X^{\varpi^{4/3}}$& \eqref{eq:D02}\\
		$D_1 = X^\varpi$& \eqref{eq:D0}\\
		$D_2 = X^{1/2-1/12(k+1)}$& \eqref{eq:D2}\\
		$D_3 = X^{1/2 + 2\varpi}$& \eqref{eq:D3}\\
		$\mathcal{P}_0=\prod_{p\le D_0} p$& \eqref{eq:factorization2}\\
		$\mathcal{P}_1 = \prod_{p\le D_1} p$& \eqref{eq:factorization3}\\
		$\rho = X^{-\varpi}$& \eqref{eq:rho}\\
		$X_1 = X^{3/8+8\varpi}$& \eqref{eq:2.13}\\
		$X_2 = X^{1/2-4\varpi}$& \eqref{eq:2.13}\\
		\hline \hline
	\end{tabular}
\end{table}

\bigskip
We follow standard notations and write $f(X)=O(g(X))$ or $f(X) \ll g(X)$ to mean that $|f(X)| \le Cg(X)$ for some fixed constant $C$, and $f(X) = o(g(X))$ if $|f(X)|\le c(X) g(X)$ for some function $c(X)$ that goes to zero as $X$ goes to infinity. The sequences $\alpha(n)$ and $\beta(n)$ we consider are all real; in particular, the absolute value sign is not needed in several expressions.

\bigskip

\subsection{Sketch of the proof of the main theorem} \label{section:sketchOfProof}

Here, and in the rest of the paper, we fix an integer $k\ge 4$, unless specified otherwise.

To prove \eqref{eq:tauk} we follow standard practice and split the summation over moduli $d$ in into two sums: one over $d< X^{\frac{1}{2}-\delta}$ which are called small moduli and the other over $X^{\frac{1}{2}-\delta}\le d< X^{\frac{1}{2}+2\varpi}$ which are called large moduli. For small moduli, we estimate \eqref{eq:tauk} directly using the large sieve inequality together with a direct substitute for the Siegel-Walfisz condition. For the von Mangoldt function $\Lambda(n)$, the M\"obius function $\mu(n)$ is involved and, hence, the Siegel-Walfisz theorem is needed to handle very small moduli. For us, fortunately, $\tau_k$ is simpler than $\Lambda$ in that $\mu$ is absent--this feature of $\tau_k$ allows us to get a sharper bound in place of the Siegel-Walfisz theorem; see Lemma \ref{lemma:sharpsiegelwalfiszbound} below. The constant here is effective.

For large moduli, we adapt the methods of Zhang in \cite{Zhang2014} to bound the error term which goes as follows. After applying suitable combinatorial arguments, we split $\tau_k$ into appropriate convolutions as Type I, II, and III, as modeled in \cite{Zhang2014}. We treat the Type I and II in our Case (b), Type III in our Case (c), and Case (a) corresponds to a trivial case which we treat directly. The main ingredients in Case (b) are the dispersion method and Weil bound on Kloosterman sums. The Case (c) depends crucially on the factorization $d=qr$ of the moduli to Weil shift a certain incomplete Kloosterman sum to the modulus $r$. The shift modulo this $r$ then induces a Ramanujan sum, which is known to have better than square-root cancellation. This allows for a saving of a power of $r$, and since $d$ is a multiple of $r$, and $d$ is less than $X$, this saves a small power of $X$ from the trivial bound.

\section{Preliminary lemmas}

We collect here lemmas that shall be used to prove our theorems. Some lemmas are standard and we quote directly from the literature.

\begin{lemma} \label{lemma:tauBound}
	For any $\epsilon >0$ we have
	\begin{equation} \label{eq:CrudeBoundOnTauk}
		\tau_j(n) \ll n^\epsilon.
	\end{equation}
\end{lemma}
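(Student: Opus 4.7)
\medskip

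The plan is to use the multiplicativity of $\tau_j$ together with the standard observation that $\tau_j(n)/n^\epsilon$ is itself multiplicative, so it suffices to control its values on prime powers. First I would recall the explicit formula $\tau_j(p^a)=\binom{a+j-1}{j-1}$, which is the number of ways to write $a$ as an ordered sum of $j$ non-negative integers, and note the crude bound $\tau_j(p^a)\le 2^{a+j-1}$ obtained from $\binom{a+j-1}{j-1}\le 2^{a+j-1}$.

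Next I would split the primes dividing $n$ into two ranges. For primes $p$ with $p^\epsilon\ge 2$ (i.e.\ $p\ge 2^{1/\epsilon}$) one has
\begin{equation}
\frac{\tau_j(p^a)}{p^{a\epsilon}}\le 2^{j-1}\left(\frac{2}{p^\epsilon}\right)^a\le 2^{j-1},
\end{equation}
so the contribution of such primes to $\tau_j(n)/n^\epsilon$ is bounded by $1$ once $p$ is slightly larger; more precisely, by fixing a threshold $p_0=p_0(\epsilon,j)$ so that $p^\epsilon\ge 2\cdot 2^{(j-1)/a_0}$ for a suitable $a_0$, one forces the local factor to be $\le 1$ for $p>p_0$. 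For the finitely many small primes $p\le p_0$, the ratio $\binom{a+j-1}{j-1}/p^{a\epsilon}$ is at most a polynomial in $a$ divided by an exponential in $a$, hence bounded uniformly in $a$ by some constant $C_p=C_p(\epsilon,j)$.

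Multiplying the local bounds together, one obtains
\begin{equation}
\frac{\tau_j(n)}{n^\epsilon}\le \prod_{p\le p_0} C_p(\epsilon,j),
\end{equation}
which is a finite constant depending only on $\epsilon$ and $j$, giving the claim. There is no genuine obstacle here; the only minor point to execute with care is choosing the threshold $p_0$ so that the large-prime factor is genuinely $\le 1$ (rather than just $\le 2^{j-1}$), so that the product over small primes is the only source of the implied constant. Since $j=k$ is fixed throughout the paper, the dependence on $j$ can be absorbed into the implied constant without further comment.
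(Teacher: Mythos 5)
Your argument is correct and is the standard elementary proof of the divisor bound; the paper itself does not prove the lemma but simply cites \cite[Equation (1.81)]{IwaniecKowalski2004}, where essentially this multiplicative local-factor argument (bound $\tau_j$ on prime powers, handle small and large primes separately) is carried out. One small point worth tightening: when you say ``so that $p^\epsilon\ge 2\cdot 2^{(j-1)/a_0}$ for a suitable $a_0$,'' the binding case is $a=1$, so you should simply take $a_0=1$, i.e.\ $p_0=2^{j/\epsilon}$; choosing a larger $a_0$ would only make the local factor $\le 1$ for $a\ge a_0$ and would still leave the exponents $1\le a<a_0$ to be checked, whereas with $a_0=1$ the inequality $2^{a+j-1}\le p^{a\epsilon}$ holds for every $a\ge 1$ once $p>p_0$. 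With that clarification, the bound
\begin{equation}
\frac{\tau_j(n)}{n^\epsilon}\le \prod_{p\le p_0}\sup_{a\ge 0}\frac{\binom{a+j-1}{j-1}}{p^{a\epsilon}}
\end{equation}
is finite and depends only on $j$ and $\epsilon$, exactly as you conclude.
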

\begin{proof}
	See \cite[Equation (1.81)]{IwaniecKowalski2004}.
\end{proof}

\begin{lemma}\label{lemma:primitivesum}
	Let $\gamma$ be an arithmetic function. If $\chi (\text{mod } d)$ is nonprincipal, then there exists a unique $q|d$, $q>1$, and a unique primitive character $\chi^* (\text{mod } q)$, such that, with $r=d/q$, 
	\begin{equation} \label{eq:primitivesum}
	\sum_{n} \gamma(n)\chi(n)
	= \sum_{(n,r)=1} \gamma(n)\chi^*(n).
	\end{equation}
\end{lemma}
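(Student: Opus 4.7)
The plan is to deduce this directly from the standard conductor--primitive character decomposition of Dirichlet characters. First, I would invoke the well-known fact that every Dirichlet character $\chi \pmod d$ is induced by a unique primitive character $\chi^* \pmod q$, where $q \mid d$ is the conductor of $\chi$; the induction relation reads $\chi(n) = \chi^*(n)$ when $(n,d) = 1$ and $\chi(n) = 0$ when $(n,d) > 1$. Since $\chi$ is nonprincipal by hypothesis, its conductor cannot equal $1$ (otherwise $\chi$ would be induced by the trivial character modulo $1$ and hence principal), so $q > 1$ as required, and both $q$ and $\chi^*$ are unique.

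Setting $r = d/q$, the key claim is the pointwise identity
$$\chi(n) = \chi^*(n) \cdot \mathbf{1}_{(n,r)=1}$$
for every positive integer $n$. This follows from a short case analysis on the gcds of $n$ with $q$ and $r$: if $(n,q) > 1$ then both sides vanish because $\chi^*(n) = 0$; if $(n,q) = 1$ but $(n,r) > 1$ then $(n,d) > 1$, so the left side vanishes by the induction relation while the indicator on the right forces the right side to vanish as well; finally if $(n,q) = (n,r) = 1$ then $(n,d) = 1$, and both sides equal $\chi^*(n)$.

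Substituting this identity into the sum on the left of \eqref{eq:primitivesum} immediately yields the right-hand side, and the uniqueness of $q$ and $\chi^*$ asserted in the statement is inherited from the uniqueness of the conductor and the inducing primitive character. There is no serious obstacle here: the lemma is essentially a notational repackaging of the conductor--primitive decomposition, the only substantive observation being that the support condition $(n,d) = 1$ attached to the induced character $\chi$ can be relaxed to the weaker condition $(n,r) = 1$ without altering the value of the sum, since $\chi^*$ itself already kills contributions from $n$ sharing a factor with $q$.
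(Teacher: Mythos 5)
Your proof is correct and follows exactly the route the paper intends: the paper offers no argument of its own, instead citing Davenport's chapter on Dirichlet characters and primitive characters, and your write-up is precisely the standard conductor/inducing-character decomposition one finds there. The pointwise identity $\chi(n) = \chi^*(n)\,\mathbf{1}_{(n,r)=1}$, verified by the three-way case analysis on $(n,q)$ and $(n,r)$ (using that $d = qr$, so a prime dividing $n$ and $d$ must divide $q$ or $r$), is the clean way to package it, and the observation that nonprincipality forces conductor $q>1$ is exactly what makes the ``$q>1$'' clause of the statement hold.
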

\begin{proof}
	See, e.g., \cite[Section 5]{Davenport} for definition of characters and proofs.
\end{proof}

In reducing nonprincipal characters, which may have not too small moduli, to primitive characters for the application of large sieve inequality, very small moduli of the primitive characters may occur. We treat contributions from those small moduli via the following lemma.

\begin{lemma}\label{lemma:tinymoduli}
	Let $\chi$ be a primitive character (mod $d$). For $d< X^{1/3(k+1)}$ we have
	\begin{equation} \label{eq:dsmallpowerofx}
	\sum_{n\le X} \tau_k(n)\chi(n) 
	\ll X^{1-\frac{1}{3(k+2)}}.
	\end{equation}
\end{lemma}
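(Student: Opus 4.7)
The plan is to apply the truncated Perron formula together with contour shifting to the critical line, invoking the convexity bound for $L(s,\chi)$. Since $\chi$ is primitive and nontrivial (so $d \ge 2$), the $L$-function $L(s,\chi)$ is entire and satisfies the standard functional equation relating it to $L(1-s,\bar\chi)$.

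First I would write, for $c = 1 + 1/\log X$ and some $T \ge 1$ to be chosen,
$$\sum_{n\le X}\tau_k(n)\chi(n) \;=\; \frac{1}{2\pi i}\int_{c-iT}^{c+iT} L(s,\chi)^k \,\frac{X^s}{s}\,ds \;+\; O\!\left(\frac{X^{1+\epsilon}}{T}\right),$$
using $\tau_k(n) \ll n^\epsilon$ from Lemma \ref{lemma:tauBound} to control the Perron tail. Because $L(s,\chi)$ is entire, the contour may be shifted all the way to $\Re s = 1/2$ without encountering a residue. The horizontal segments at heights $\pm T$ are seen to be negligible via the Phragmén--Lindelöf convexity bound $L(s,\chi) \ll (d(1+|t|))^{(1-\sigma)/2+\epsilon}$, valid for $0 \le \sigma \le 1$ and obtained by interpolating between $L(1+it,\chi) \ll \log(d(1+|t|))$ and the functional-equation bound $L(it,\chi) \ll (d(1+|t|))^{1/2+\epsilon}$ at $\sigma = 0$.

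On the critical line the same bound yields $|L(1/2+it,\chi)|^k \ll (d(1+|t|))^{k/4+\epsilon}$, so the vertical integral is majorized by $X^{1/2+\epsilon}\,d^{k/4}\,T^{k/4}$. Balancing this against the Perron error $X^{1+\epsilon}/T$ via the choice $T = X^{2/(k+4)}\,d^{-k/(k+4)}$ produces
$$\sum_{n\le X}\tau_k(n)\chi(n) \;\ll\; X^{1 - 2/(k+4) + \epsilon}\,d^{k/(k+4)}.$$
Inserting the hypothesis $d < X^{1/3(k+1)}$ reduces matters to the purely algebraic inequality
$$1 - \frac{2}{k+4} + \frac{k}{3(k+1)(k+4)} \;\le\; 1 - \frac{1}{3(k+2)},$$
which in turn is equivalent to $(5k+6)(k+2) \ge (k+1)(k+4)$, i.e.\ $4k^2 + 11k + 8 \ge 0$, and this holds with ample slack. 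That slack allows the $X^\epsilon$ loss to be absorbed (for $\epsilon$ small enough in terms of $k$), yielding the claimed bound $X^{1-1/3(k+2)}$.

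Since the argument is essentially textbook convexity--Perron, I do not anticipate any real obstacle. The one point meriting care is the bookkeeping verifying that the hypothesis $d < X^{1/3(k+1)}$ is \emph{just barely} strong enough, via mere convexity, to deliver the target saving $X^{-1/3(k+2)}$; if one wished to relax this range on $d$ or to sharpen the saving, one would be forced to invoke subconvex bounds for $L(s,\chi)$ such as those of Heath-Brown, but this is unnecessary for the present statement.
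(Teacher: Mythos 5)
Your proposal is correct and takes essentially the same high-level approach as the paper: reduce to a contour integral of $L(s,\chi)^k$, shift to the critical line $\Re s = 1/2$, and invoke the convexity bound for $L(s,\chi)$. The algebra at the end checks out; the chain of reductions $1 - \frac{2}{k+4} + \frac{k}{3(k+1)(k+4)} \le 1 - \frac{1}{3(k+2)} \iff (5k+6)(k+2)\ge (k+1)(k+4) \iff 4k^2+11k+8\ge 0$ is verified, and for fixed $k$ the slack comfortably absorbs the $X^\epsilon$ loss. You also correctly check that $T\ge 1$ is forced by the hypothesis $d<X^{1/3(k+1)}$.

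The technical implementation, however, differs from the paper's. The paper decomposes $[1,X]$ into dyadic blocks $[N,2N)$, inserts a compactly supported $C^\infty$ weight $f$ to smooth the sharp cutoff, passes to Mellin transforms, and damps the large-$|t|$ tail of $\int_{(1/2)}F(s)L(s,\chi)^k\,ds$ by integrating $F(s)$ by parts $\ell>k+1$ times to gain $|s|^{-\ell}$; the truncation point is chosen as $T=N^{1-\eta}$ with $\eta=1-\tfrac{1}{3(k+1)}$. You instead use the sharp-cutoff Perron kernel $X^s/s$ directly with the standard error $O(X^{1+\epsilon}/T)$, balance the vertical contribution against the Perron tail, and optimize $T$. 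Both work; your route is shorter and bypasses the dyadic decomposition and smoothing, at the small cost of an extra $X^\epsilon$ loss from the sharp cutoff, which the slack above easily covers. One minor further difference worth noting: the paper's displayed bound $|L(s,\chi)|^k \ll d^{k/4}|s|^k$ on the critical line is wasteful in the $|s|$-aspect (convexity actually gives $|s|^{k/4+\epsilon}$, which is what you use); this does not affect the paper's conclusion because its smoothing absorbs the loss, but your sharper form of the bound is what makes the direct Perron approach close.
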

\begin{proof} Decompose the interval $[1,X]$ in to dyadic intervals of the form $[N,2N)$. Denote by
	\begin{equation}
	\psi(\chi)=\sum_{n\sim N} \tau_k(n)\chi(n).
	\end{equation}
	Let $0<\eta<1$ be a parameter to be specified latter (see \eqref{eq:eta} below). Let $f(x)$ be a function of $C^\infty(-\infty,\infty)$ class such that $0\le f(y)\le 1$,
	\begin{equation}
	f(y) = 1\quad \text{ if }\quad N\le y \le 2N,
	\end{equation}
	\begin{equation}
	f(y) = 0\quad \text{ if }\quad 
	y\notin [N-N^\eta, 2N+N^\eta],
	\end{equation}
	and obeying the derivative bound
	\begin{equation} \label{eq:fjDerivativeBound}
	f^{(j)}(y) \ll N^{-j\eta},\ j\ge 1,
	\end{equation}
	where the implied constant depends on $\eta$ and $j$ at most.
	Let
	\begin{equation} \label{eq:S*}
	\psi^*(\chi)=\sum_{n=1}^\infty \tau_k(n) \chi(n) f(n).
	\end{equation}
	By \eqref{eq:CrudeBoundOnTauk}, we have
	\begin{equation}
	\psi^*(\chi) - \psi(\chi)
	= \sum_{N-N^\eta\le n\le N}\tau_k(n)\chi(n)
	+ \sum_{2N\le n\le 2N+N^\eta}\tau_k(n)\chi(n)
	\ll N^{\eta+\epsilon}
	\end{equation}
	for any $\epsilon>0$. Let
	\begin{equation} \label{eq:defofG}
	F(s)=\int_0^\infty f(x)x^{s-1} dx
	\end{equation}
	be the Mellin transform of $f(x)$. The function $F(s)$ is absolutely convergent for $\sigma>0$ with inverse Mellin transform
	\begin{equation} \label{eq:inverseMellin}
	f(x) = \frac{1}{2\pi i}
	\int_{(2)}
	F(s) x^{-s} ds,
	\end{equation}
	where $\int_{(c)}$ denotes the integration $\int_{c-i\infty}^{c+i\infty}$ over the vertical line $c+it$ where $t$ runs from $-\infty$ to $\infty$. Substituting \eqref{eq:inverseMellin} into \eqref{eq:S*} and changing the order of summation and integration, we get
	\begin{align}
	\psi^*(\chi) 
	&= \sum_{n=1}^\infty \tau_k(n) \chi(n) \left(\frac{1}{2\pi i} \int_{(2)} F(s) n^{-s} ds \right)\\
	&= \frac{1}{2\pi i} \int_{(2)} F(s) \left(\sum_{n=1}^\infty \tau_k(n) \chi(n) n^{-s} \right) ds\\
	\label{eq:line2}&= \frac{1}{2\pi i} \int_{(2)} F(s) L(s,\chi)^k ds,
	\end{align}
	where $L(s,\chi)=\sum_{n=1}^\infty \chi(n) n^{-s}$ is the Dirichlet series for $\chi$. Since the function $L(s,\chi)$, and thus, $F(s)L(s,\chi)^k$ has no poles in $\sigma\ge 0$, we may move the line of integration in \eqref{eq:line2} from $\sigma=2$ to $\sigma=1/2$ and obtain
	\begin{equation} \label{eq:fmellin}
	\psi^*(\chi) =
	\frac{1}{2\pi i} \int_{(\frac{1}{2})} F(s) L(s,\chi)^k ds.
	\end{equation}
	
	We next estimate this integral by bounding the integrand and splitting the line of integration into two parts, over $|t|<T$ and $|t|\ge T$, then choosing $T$ suitably (see \eqref{eq:T} below). For $\sigma=1/2$, we have the convexity bound; see, e.g., \cite[Theorem 5.23]{IwaniecKowalski2004},
	\begin{equation}\label{eq:Lkbound}
	|L(s,\chi)|^k \ll d^{k/4} |s|^k.
	\end{equation}
	We next obtain upper bound for $F(s)$. On the line $\sigma=1/2$, we have, by definitions of $F(s)$ and $f(x)$,
	\begin{equation}\label{eq:Gcrudebound}
	F(s)
	= \int_{N-N^\eta}^{2N+N^\eta} f(x) x^{s-1} dx
	\le \int_{N-N^\eta}^{2N+N^\eta} x^{-1/2} dx
	\ll N^{1/2}.
	\end{equation}
	This bound is sufficient for bounding small $|t|$ in \eqref{eq:fmellin}, but too large for  $|t|$ large. To bound contribution from large $|t|$ we fix an
	\begin{equation} \label{eq:ell}
	\ell > k+1
	\end{equation}
	and apply integration by parts $\ell$ times to $F(s)$:
	\begin{align}
	F(s)&=(-1)^\ell \frac{1}{s(s+1)\cdots(s+\ell+1)}\int_0^\infty f^{(\ell)}(x) x^{s+\ell-1} dx.
	\end{align}
	Hence, by the derivative bound \eqref{eq:fjDerivativeBound}, $F(s)$ is bounded by
	\begin{align}\label{eq:Gbetterbound}
	|F(s)|
	\ll \frac{1}{|s|^\ell} N^{-\ell\eta+1/2+\ell}
	\ll \frac{1}{|s|^\ell} N^{(1-\eta)\ell+1/2}.
	\end{align}
	This bound allows us to save an arbitrary negative power of $|s|$; we will use this bound for large $|t|$. 
	
	We now split the integral in \eqref{eq:fmellin} into two and estimate each part individually. Let $s=1/2+it$. For $T>2$, we can write $\psi^*(\chi)$ in \eqref{eq:fmellin} as
	\begin{equation}
	\psi^*(\chi) = \frac{1}{2\pi i}\int_{|t|<T} F(s)L(s,\chi)^k  ds
	+ \frac{1}{2\pi i}\int_{|t|\ge T} F(s)L(s,\chi)^k ds.
	\end{equation}
	By \eqref{eq:Lkbound} and \eqref{eq:Gcrudebound}, the first term on the right side is
	\begin{equation}\label{eq:smalltcontribution}
	\frac{1}{2\pi i}\int_{|t|<T} 
	F(s) L(s,\chi)^k ds
	\ll N^{1/2}d^{k/4}\int_{-T}^T |1/2+it|^k dt
	\ll N^{1/2}d^{k/4}T^{k+1};
	\end{equation}
	while by \eqref{eq:Lkbound}, \eqref{eq:Gbetterbound}, and \eqref{eq:ell}, the second term on the right side is
	\begin{equation}\label{largetcontribution}
	\frac{1}{2\pi i}\int_{|t|\ge T} 
	F(s) L(s,\chi)^k  ds
	\ll N^{(1-\eta)\ell+1/2}d^{k/4}\int_T^\infty t^{k-\ell} dt
	\ll N^{(1-\eta)\ell +1/2}d^{k/4}T^{k-\ell+1}.
	\end{equation}
	Hence we choose 
	\begin{equation} \label{eq:T}
	T=N^{1-\eta},
	\end{equation}
	so that the contributions in \eqref{eq:smalltcontribution} and (\ref{largetcontribution} are of the same order. With this choice of $T$, we get
	\begin{equation}
	\psi^*(\chi) \ll N^{(1-\eta)(k+1)+1/2}d^{k/4}.
	\end{equation}
	
	It remains to specify $\eta$. Let
	\begin{equation} \label{eq:eta}
	\eta = 1-\frac{1}{3(k+1)},
	\end{equation}
	so that $(1-\eta)(k+1)=1/3$. Thus
	\begin{equation}
	\psi^*(\chi) 
	\ll N^{5/6}d^{k/4}
	\end{equation}
	and
	\begin{equation}
	\psi(\chi) 
	\ll N^{5/6}d^{k/4} + N^{\eta+\epsilon}.
	\end{equation}
	Summing over $N=X2^{-k}$, we find that the left side of \eqref{eq:dsmallpowerofx} is bounded by
	\begin{equation}
	\ll X^{5/6+\epsilon} d^{k/4}
	+ X^{1-\frac{1}{3(k+1)}+\epsilon}.
	\end{equation}
	Thus, if $d<X^{1/3(k+1)}$, then the above estimate is
	\begin{equation}
	\ll X^{1-\frac{1}{3(k+1)}+\epsilon}
	\ll X^{1-\frac{1}{3(k+2)}}
	\end{equation}
	for small enough $\epsilon$. This gives the estimate \eqref{eq:dsmallpowerofx}.
\end{proof}

\begin{lemma}\label{lemma:nonprincipalsum}
	Let $\gamma$ be an arithmetic function. For $(a,d)=1$ we have
	\begin{equation} \label{eq:nonprincipalsum2}
	\Delta(\gamma;X,d,a)
	=\frac{1}{\varphi(d)} \sideset{}{'}\sum_{\chi (\text{mod } d)} 
	\overline{\chi}(a)
	\left(\sum_{n\le X} \gamma(n)\chi(n)\right).
	\end{equation}
\end{lemma}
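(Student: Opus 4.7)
The plan is to apply the standard orthogonality of Dirichlet characters. Recall that for $(a,d)=1$ the orthogonality relation reads
\begin{equation}
\frac{1}{\varphi(d)}\sum_{\chi (\text{mod } d)} \overline{\chi}(a)\chi(n) = \begin{cases} 1 & \text{if } n\equiv a\ (\text{mod } d),\\ 0 & \text{if } (n,d)=1 \text{ and } n\not\equiv a\ (\text{mod } d),\\ 0 & \text{if } (n,d)>1,\end{cases}
\end{equation}
since $\chi(n)=0$ whenever $(n,d)>1$.

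First I would insert this identity into the defining sum $\sum_{n\le X,\ n\equiv a\,(d)}\gamma(n)$ and interchange the two finite summations to obtain
\begin{equation}
\sum_{\substack{n\le X\\ n\equiv a (\text{mod } d)}}\gamma(n) = \frac{1}{\varphi(d)}\sum_{\chi (\text{mod } d)}\overline{\chi}(a)\sum_{n\le X}\gamma(n)\chi(n).
\end{equation}
Next I would isolate the contribution of the principal character $\chi_0 (\text{mod } d)$: since $\overline{\chi_0}(a)=1$ for $(a,d)=1$ and $\chi_0(n)$ is the indicator function of $(n,d)=1$, the $\chi_0$-term equals
\begin{equation}
\frac{1}{\varphi(d)}\sum_{\substack{n\le X\\ (n,d)=1}}\gamma(n),
\end{equation}
which is precisely the main term subtracted in the definition of $\Delta(\gamma;X,d,a)$.

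Subtracting this principal-character contribution from both sides gives exactly
\begin{equation}
\Delta(\gamma;X,d,a) = \frac{1}{\varphi(d)}\sideset{}{'}\sum_{\chi (\text{mod } d)}\overline{\chi}(a)\sum_{n\le X}\gamma(n)\chi(n),
\end{equation}
where the primed sum runs over nonprincipal characters. There is no real obstacle here: this is a pure orthogonality identity, and the only point to be careful about is correctly interpreting $\chi_0(n)$ as the indicator of $(n,d)=1$, so that the principal-character term matches the main term in the definition of $\Delta$ exactly.
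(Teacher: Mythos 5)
Your proof is correct and follows essentially the same route as the paper: expand the congruence condition via orthogonality of Dirichlet characters, swap the order of summation, and peel off the principal character, whose contribution is exactly the main term $\frac{1}{\varphi(d)}\sum_{n\le X,\ (n,d)=1}\gamma(n)$. The only cosmetic difference is that you spell out explicitly that $\chi_0$ is the indicator of $(n,d)=1$, which the paper leaves implicit.
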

\begin{proof} 
	By the orthogonality condition
	\begin{equation}
		\frac{1}{\varphi(d)} \sum_{\chi (\text{mod } d)} \overline{\chi}(a) \chi(n)
		=
		\begin{cases}
		1,& \text{ if } n\equiv a (d),\\
		0,& \text{ otherwise},
		\end{cases}
	\end{equation}
	we may write
	\begin{equation} \label{eq:sumGamma}
		\sum_{\substack{n\le X\\ n\equiv a (d)}}
		\gamma(n)
		=\sum_{n\le X} \gamma(n)
		\left(\frac{1}{\varphi(d)} \sum_{\chi (\text{mod } d)} \overline{\chi}(a)\chi(n) \right)
		= \frac{1}{\varphi(d)} \sum_{\chi (\text{mod } d)} \overline{\chi}(a) 
		\left(\sum_{n\le X} \gamma(n)\chi(n)\right).
	\end{equation}
	If $\chi$ (mod $d$) is principal, then
	\begin{equation}
		\begin{cases}
		\overline{\chi}(a) = 1,\\
		\displaystyle\sum_{n\le X} \gamma(n) \chi(n) 
		= \displaystyle\sum_{\substack{n\le X\\ (n,d)=1}}
		\gamma(n).
		\end{cases}
	\end{equation}
	Hence the contribution from the principal character gives the main term in \eqref{eq:sumGamma} and the discrepancy $\Delta(\gamma;X,d,a)$ is given by a sum over nonprincipal characters. This gives \eqref{eq:nonprincipalsum2}.
\end{proof}

The next lemma is the well-known multiplicative large sieve inequality.

\begin{lemma} \label{lemma:largesievemultiplicative}
	Let $\chi$ be a primitive character mod $q$. For $a(n)$ a sequence of complex numbers, we have
	\begin{equation} \label{eq:largesieveineq}
		\sum_{q\le Q}
		\sideset{}{^*}\sum_{\ \chi(\text{mod } q)}
		\left|\sum_{n\le N} a(n)\chi(n) \right|^2
		\ll
		(Q^2+N)\sum_{n\le N} |a(n)|^2.
	\end{equation}
\end{lemma}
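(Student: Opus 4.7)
The plan is to deduce the multiplicative large sieve from its additive counterpart via the Gauss sum identity, a standard reduction that the paper essentially alludes to by attributing the result to the literature. I would first recall that for a primitive character $\chi$ modulo $q$, the Gauss sum $\tau(\bar\chi)=\sum_{b\bmod q}\bar\chi(b)e_q(b)$ has absolute value $\sqrt{q}$, and one has the identity
\begin{equation}
\chi(n)=\frac{1}{\tau(\bar\chi)}\sum_{b\bmod q}\bar\chi(b)e_q(bn)
\end{equation}
valid for all $n$ (precisely because $\chi$ is primitive, so that the relation extends from $(n,q)=1$ to arbitrary $n$). Writing $S(\alpha)=\sum_{n\le N}a(n)e(\alpha n)$, this gives
\begin{equation}
\sum_{n\le N} a(n)\chi(n)=\frac{1}{\tau(\bar\chi)}\sum_{b\bmod q}\bar\chi(b)\,S(b/q).
\end{equation}

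Next I would take absolute values squared, use $|\tau(\bar\chi)|^2=q$, and enlarge the sum over primitive $\chi$ to all characters modulo $q$, so as to invoke the orthogonality relation $\sum_{\chi\bmod q}\bar\chi(b_1)\chi(b_2)=\varphi(q)\cdot\mathbf{1}_{b_1\equiv b_2,\ (b_1b_2,q)=1}$. This yields
\begin{equation}
\sideset{}{^*}\sum_{\chi\bmod q}\Bigl|\sum_{n\le N}a(n)\chi(n)\Bigr|^2 \;\le\; \frac{\varphi(q)}{q}\,\sideset{}{^*}\sum_{b\bmod q}|S(b/q)|^2 \;\le\; \sideset{}{^*}\sum_{b\bmod q}|S(b/q)|^2,
\end{equation}
and hence after summing over $q\le Q$,
\begin{equation}
\sum_{q\le Q}\sideset{}{^*}\sum_{\chi\bmod q}\Bigl|\sum_{n\le N}a(n)\chi(n)\Bigr|^2 \;\le\; \sum_{q\le Q}\sideset{}{^*}\sum_{b\bmod q}|S(b/q)|^2.
\end{equation}

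The right-hand side is the additive large sieve sum over the set of Farey fractions $b/q$ with $q\le Q$ and $(b,q)=1$. Any two such distinct fractions $b_1/q_1,\,b_2/q_2$ satisfy $|b_1/q_1-b_2/q_2|\ge 1/(q_1q_2)\ge 1/Q^2$, so they form a $Q^{-2}$-spaced subset of $\mathbb R/\mathbb Z$. The classical Montgomery--Vaughan inequality for well-spaced frequencies then gives
\begin{equation}
\sum_{q\le Q}\sideset{}{^*}\sum_{b\bmod q}|S(b/q)|^2 \;\ll\; (N+Q^2)\sum_{n\le N}|a(n)|^2,
\end{equation}
which is the desired bound. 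The only nontrivial analytic ingredient is this additive large sieve inequality, whose proof uses a Hilbert-type inequality for exponential sums with well-spaced frequencies; since this is entirely standard, I would invoke it as a black box (citing, e.g., the treatment in Iwaniec--Kowalski or Davenport), and the rest of the argument consists of the purely algebraic Gauss sum manipulation described above.
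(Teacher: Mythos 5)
Your proof is correct and is exactly the standard reduction used in the reference the paper cites (Iwaniec--Kowalski, \S 7.4): primitivity gives the Gauss-sum identity for all $n$, orthogonality over all characters mod $q$ converts the character sum to a sum over Farey points $b/q$ with a factor $\varphi(q)/q\le 1$, and the $Q^{-2}$-spacing of Farey fractions lets you invoke the additive large sieve to finish. The paper gives no independent proof, only the citation, so there is nothing further to compare.
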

\begin{proof}
	See \cite[Theorem 7.13]{IwaniecKowalski2004}.
\end{proof}

The next lemma is a truncated Poisson formula.

\begin{lemma} \label{lemma:7}
	Suppose that $\eta^*>1$ and $X^{1/4}<M<X^{2/3}$. Let $f$ be a function of $C^{\infty}(-\infty,\infty)$ class such that $0\le f(y)\le 1$,
	\begin{equation}
	f(y)=1\quad \text{if}\quad M\le y\le \eta^*M,
	\end{equation}
	\begin{equation} 
	f(y)=0\quad \text{if}\quad y\notin[(1-M^{-\epsilon})M,(1+M^{-\epsilon}\eta^*)M],
	\end{equation}
	and
	\begin{equation}
	f^{(j)}(y) \ll M^{-j(1-\epsilon)},\quad j\ge 1,
	\end{equation}
	with the implied constant depending on $\epsilon$ and $j$ at most. Then we have
	\begin{equation}
	\sum_{m\equiv a (d)} f(m)
	= \frac{1}{d} \sum_{|h|<H}
	\hat{f}(h/d) e_d(-ah) + O(d^{-1})
	\end{equation}
	for any $H\ge dM^{-1+2\epsilon}$, where $\hat{f}$ is the Fourier transform of $f$.
\end{lemma}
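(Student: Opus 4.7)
The plan is a textbook application of Poisson summation followed by a smooth-truncation estimate of the dual sum. First, I would parametrize the arithmetic progression by $m = a+dn$ for $n\in\mathbb{Z}$, so that
\[
  \sum_{m\equiv a\,(d)} f(m) = \sum_{n\in\mathbb{Z}} g(n),\qquad g(x):=f(a+dx).
\]
Classical Poisson summation gives $\sum_n g(n) = \sum_h \hat g(h)$, and the change of variable $y = a+dx$ in the integral defining $\hat g(h)$ yields $\hat g(h) = d^{-1} e_d(-ah)\,\hat f(h/d)$. This produces the untruncated identity
\[
  \sum_{m\equiv a\,(d)} f(m) = \frac{1}{d}\sum_{h\in\mathbb{Z}} \hat f(h/d)\,e_d(-ah).
\]

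Next, to bound the tail at $|h|\ge H$, I would exploit the smoothness of $f$: for any fixed $j \ge 1$ and $\xi \ne 0$, integrating by parts $j$ times in $\hat f(\xi) = \int f(y)e(y\xi)\,dy$ gives
\[
  |\hat f(\xi)| \le (2\pi|\xi|)^{-j}\,\|f^{(j)}\|_1.
\]
The hypotheses $|\mathrm{supp}\,f| \ll M$ and $\|f^{(j)}\|_{\infty} \ll M^{-j(1-\epsilon)}$ furnish $\|f^{(j)}\|_1 \ll M^{1-j(1-\epsilon)}$, so that $|\hat f(h/d)| \ll M^{1-j(1-\epsilon)} (d/|h|)^{j}$.

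Summing this bound over $|h|\ge H$ produces
\[
  \frac{1}{d}\sum_{|h|\ge H} |\hat f(h/d)| \ll M^{1-j(1-\epsilon)} \Bigl(\frac{d}{H}\Bigr)^{j-1}.
\]
Inserting $H \ge d M^{-1+2\epsilon}$ gives $d/H \le M^{1-2\epsilon}$ and collapses the exponent of $M$ on the right to $-\epsilon(j-2)$. Since the implied constants may depend on $j$ and $\epsilon$, taking $j$ sufficiently large (in terms of $\epsilon$ and the polynomial size of $d$ relative to $M$, which in the paper's applications is controlled by $M > X^{1/4}$ and $d < X^{O(1)}$) makes the tail $O(d^{-1})$, completing the proof. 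The only real obstacle is bookkeeping: matching the paper's Fourier convention $\hat f(z) = \int f(y) e(yz)\,dy$ with Poisson summation, and tuning the exponents so that the stated threshold $H \ge dM^{-1+2\epsilon}$ is exactly what is needed to absorb the smoothness loss coming from the derivative bound $f^{(j)} \ll M^{-j(1-\epsilon)}$ rather than the sharper $M^{-j}$.
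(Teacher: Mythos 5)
The paper does not prove this lemma; it is cited verbatim from Bombieri--Friedlander--Iwaniec \cite[Lemma 2]{BFI1986}, so there is no internal proof to compare against. Your argument---Poisson summation applied to the progression $m=a+dn$, followed by $j$-fold integration by parts to bound $\hat f(h/d)$ and truncate at $|h|<H$---is the standard proof and is correct; the only point worth making explicit is the one you already flag, namely that the tail bound $M^{-\epsilon(j-2)}$ can only be made $O(d^{-1})$ uniformly if $d$ is at most a fixed power of $M$, which is automatic here since $d<X^{1/2+2\varpi}$ while $M>X^{1/4}$.
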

\begin{proof}
	See \cite[Lemma 2]{BFI1986}.
\end{proof}

\begin{lemma} \label{lemma:8}
	Suppose that $1\le N<N'<2x$, $N'-N > X^\epsilon d$, and $(c,d)=1$. Then for $j,\nu\ge 1$ we have
	\begin{equation} \label{eq:lemma81}
	\sum_{\substack{N\le n\le N'\\}}
	\tau_j(n)^\nu
	\ll (N'-N) \mathcal{L}^{j^\nu-1},
	\end{equation}
	and
	\begin{equation} \label{eq:lemma82}
	\sum_{\substack{N\le n\le N'\\ n\equiv c(d)}}
	\tau_j(n)^\nu
	\ll \frac{N'-N}{\varphi(d)} \mathcal{L}^{j^\nu-1}.
	\end{equation}
	The implied constants depending on $\epsilon, j$, and $\nu$ at most.
\end{lemma}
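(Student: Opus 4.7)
The plan is to deduce both bounds from a standard Shiu-type estimate for sums of nonnegative multiplicative functions in short intervals of arithmetic progressions. Set $g(n) := \tau_j(n)^\nu$; this is nonnegative and multiplicative, with $g(p^m) = \binom{m+j-1}{j-1}^\nu \le A(j,\nu)^m$ at prime powers, and $g(n) \ll_{j,\nu,\epsilon} n^\epsilon$ follows from Lemma \ref{lemma:tauBound}. These are exactly the hypotheses of Shiu's theorem (P.\ Shiu, ``A Brun--Titchmarsh theorem for multiplicative functions,'' \emph{J.\ Reine Angew.\ Math.} \textbf{313} (1980), 161--170).

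Next, I would apply Shiu's theorem with $x = X$, $y = N' - N$, modulus $d$, and residue class $c$. The hypothesis $N' - N > X^\epsilon d$ together with $N' \le 2X$ guarantees $d \le y^{1-\alpha}$ for some fixed $\alpha > 0$, and $y > X^\epsilon > \exp((\log X)^{\alpha_1})$, so Shiu's conditions hold. The conclusion reads
\begin{equation*}
\sum_{\substack{N \le n \le N'\\ n \equiv c\,(d)}} g(n)
\ll \frac{N' - N}{\varphi(d)\log X}\,
\exp\!\left(\sum_{\substack{p \le X\\ p \nmid d}} \frac{g(p)}{p}\right).
\end{equation*}
Since $g(p) = \tau_j(p)^\nu = j^\nu$ for every prime $p$, Mertens' theorem yields $\sum_{p \le X,\, p \nmid d} g(p)/p = j^\nu \log\log X + O_d(1)$, so the exponential factor is $\asymp (\log X)^{j^\nu}$. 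Substituting gives $\frac{N'-N}{\varphi(d)} \mathcal{L}^{j^\nu - 1}$, which is exactly \eqref{eq:lemma82}. Taking $d = 1$ recovers \eqref{eq:lemma81}.

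The main obstacle, should one prefer a self-contained proof, is obtaining the correct power $\mathcal{L}^{j^\nu - 1}$ of the logarithm rather than a cruder $\mathcal{L}^{A}$ with $A$ much larger. A direct route would factor the Dirichlet series $\sum_n g(n) n^{-s} = \zeta(s)^{j^\nu} H(s)$ with $H(s)$ regular and bounded in $\sigma > 1/2$ (using $g(p) = j^\nu$ to match the leading Euler factor), and then apply Perron's formula combined with contour shifting in the Selberg--Delange style; the arithmetic-progression case would be handled by inserting characters and treating each $L(s,\chi)^{j^\nu} H(s,\chi)$ analogously. The short-interval constraint $N' - N > X^\epsilon d$ is what allows the error terms from Perron's formula to be absorbed. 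Since all of this is classical and standard, invoking Shiu is by far the cleanest route, and is the approach I would take.
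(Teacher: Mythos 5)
Your proposal is correct and takes exactly the same route as the paper: both reduce the lemma to Shiu's Brun--Titchmarsh theorem for nonnegative multiplicative functions (the same reference, \cite{Shiu1980}), with the paper citing it directly and you spelling out the verification of Shiu's hypotheses and the Mertens computation giving $\exp(\sum_{p\le X} j^\nu/p) \asymp (\log X)^{j^\nu}$. One tiny note: the term $\sum_{p\mid d} g(p)/p$ you drop is nonnegative, so omitting it from the exponent only decreases the bound, which is what you need; writing it as $O_d(1)$ is unnecessary and could mislead since that quantity is not uniformly bounded in $d$.
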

\begin{proof}
	See \cite[Theorem 2]{Shiu1980}.
\end{proof}

In the next lemma we verify a substitute for the ``Siegel-Walfisz" condition.

\begin{lemma} \label{lemma:sharpsiegelwalfiszbound}
	Let $\beta=\beta_{i_1}*\cdots * \beta_{i_\ell}$, $1\le i_1\le i_2\le \cdots \le i_\ell \le k$,  and $\beta_j=\chi_{N_j}$, with $N:=N_{i_1}N_{i_2}\cdots N_{i_\ell}\gg X^\kappa$ for some constant $\kappa>0$. For $\chi$ a primitive character modulo $r\ll X^{\kappa}$, we have
	\begin{equation} \label{eq:sharpsiegelwalfiszbound}
		\sum_n \beta(n) \chi(n)
		\ll X^{-\kappa/12} N.
	\end{equation}
\end{lemma}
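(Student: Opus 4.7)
The plan is to exploit the complete multiplicativity of the Dirichlet character $\chi$ in order to factor the sum $\sum_n \beta(n)\chi(n)$ into a product of $\ell$ short character sums, one for each factor of the convolution $\beta = \beta_{i_1}*\cdots*\beta_{i_\ell}$, and then estimate each such factor using the classical inequalities of P\'olya--Vinogradov and Burgess.

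First I would write, using the complete multiplicativity of $\chi$,
\begin{equation}
\sum_n \beta(n)\chi(n) \;=\; \prod_{j=1}^{\ell}\Bigl(\sum_{N_{i_j}\le m<(1+\rho)N_{i_j}} \chi(m)\Bigr) \;=\; \prod_{j=1}^{\ell} S_j,
\end{equation}
where each $S_j$ is a character sum over an interval of length $\rho N_{i_j}=X^{-\varpi}N_{i_j}$. For every $j$ I would then record three bounds: the trivial estimate $|S_j|\le \rho N_{i_j}$; the P\'olya--Vinogradov inequality $|S_j|\ll \sqrt{r}\log r$ (applicable because $\chi$ is primitive and nontrivial, $r>1$, coming from $N\gg X^\kappa\ge r$); and the Burgess bound $|S_j|\ll (\rho N_{i_j})^{1-1/q}\,r^{(q+1)/(4q^2)+\epsilon}$, valid for any integer $q\ge 2$.

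The combination I have in mind is as follows. Applying the trivial bound to every factor immediately yields $\bigl|\prod_j S_j\bigr|\le \rho^{\ell}N=X^{-\ell\varpi}N$, which already dominates $X^{-\kappa/12}N$ whenever $\ell\varpi\ge \kappa/12$. When $\ell$ is too small for this to suffice, I would exploit the fact that, since $\prod_{j=1}^\ell N_{i_j}=N\gg X^\kappa$, the largest factor satisfies $N_{i_1}\ge N^{1/\ell}\ge X^{\kappa/\ell}$; applying P\'olya--Vinogradov to $S_1$ and the trivial bound to the remaining $S_j$ yields
\begin{equation}
\Bigl|\prod_{j=1}^{\ell} S_j\Bigr| \;\ll\; \sqrt{r}(\log r)\cdot \rho^{\ell-1}\cdot \frac{N}{N_{i_1}} \;\ll\; X^{\kappa/2-\kappa/\ell-(\ell-1)\varpi+o(1)}\,N,
\end{equation}
which is $\ll X^{-\kappa/12}N$ in the relevant range. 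For intermediate $\ell$ in which this P\'olya--Vinogradov step is insufficient, I would upgrade to Burgess with a well-chosen $q\ge 2$ applied to one or more of the largest factors, using $N_{i_1}N_{i_2}\cdots N_{i_s}\ge N^{s/\ell}$ to control the resulting product.

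The main obstacle is the calibration in Step 3: one must choose, for each regime of $\ell$ and of the configuration of the $N_{i_j}$, exactly which factors receive the trivial, P\'olya--Vinogradov, or Burgess bound, and with what Burgess parameter $q$, so that the exponents combine cleanly to give the uniform saving $X^{-\kappa/12}$ stated in \eqref{eq:sharpsiegelwalfiszbound}. This amounts to a case analysis based on the sizes of $N_{i_1}\ge\cdots\ge N_{i_\ell}$ relative to $r\ll X^\kappa$ and to $\rho=X^{-\varpi}$, but once the right book-keeping is set up the constant $1/12$ emerges from the Burgess exponent $(q+1)/(4q^2)$ evaluated at a moderate $q$ (combined with the square-root cancellation on the largest interval).
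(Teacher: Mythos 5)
Your factorization step $\sum_n \beta(n)\chi(n) = \prod_j S_j$ is exactly what the paper does; the difference lies entirely in how the individual short character sums $S_j$ are bounded. The paper bounds a single factor via a smooth approximation, a Mellin transform, and the convexity estimate $|L(1/2+it,\chi)| \ll r^{1/4}|s|$, then splits the contour at $|t| = N^{1/12}$; the exponent $1/12$ enters through the width $N^{11/12}$ of the smoothing window, not through any character-sum inequality. The multi-factor case is handled in the paper by multiplying the single-factor bounds. You instead propose P\'olya--Vinogradov and Burgess, which is a genuinely more elementary route, and which for a single factor of length comparable to $r$ is in fact stronger than the paper's convexity bound (P\'olya--Vinogradov already gives $r^{1/2+\epsilon}$ versus the paper's $N^{11/12}$ when $N \asymp r$).

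That said, your sketch has gaps that I do not think dissolve under the advertised ``book-keeping.'' First, the trivial bound $|S_j| \le \rho N_{i_j}$ is false once $\rho N_{i_j} < 1$: the interval $[N_{i_j},(1+\rho)N_{i_j})$ still contains an integer, so the correct trivial bound is $\max(1,\rho N_{i_j})$. This matters because the $\rho^\ell$-saving you rely on in the all-trivial regime evaporates whenever several of the $N_{i_j}$ are small. Second, the all-trivial regime $\ell\varpi \ge \kappa/12$ requires $\ell \gtrsim \kappa/(12\varpi)$, which with $\varpi = 1/1168$ and $\kappa$ of size $\asymp 1/2$ (the range that actually occurs in Case (a) and Case (c)) needs $\ell$ of order $50$--$60$, far larger than the bounded $\ell \le k$ available here, so this regime is essentially empty. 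Third, and most concretely: take $\ell = 3$ equal factors $N_{i_j} = X^{\kappa/3}$ with $r \asymp X^\kappa$; Burgess with $q=2$ gives each $S_j \ll X^{17\kappa/48 - \varpi/2 + \epsilon}$, whose cube $X^{17\kappa/16 - 3\varpi/2 + \epsilon}$ exceeds the target $X^{11\kappa/12}$, and optimizing over $q$ does not help (nor does it in the paper's proof of the ``multiplicativity'' step, incidentally). Your closing claim that ``the constant $1/12$ emerges from the Burgess exponent $(q+1)/(4q^2)$ at moderate $q$'' is therefore not substantiated: for $q=2,3,4$ that exponent is $3/16$, $1/9$, $5/64$, none of which is $1/12$, and the $1/12$ in the paper genuinely comes from the smoothing parameter $\eta = 11/12$. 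So the route you sketch is defensible in spirit, but the calibration you defer to ``Step~3'' is precisely where the proof lives, and the specific case you would need to carry (intermediate $\ell$, $r$ near $X^\kappa$) is one where your tools, as deployed, do not close the gap.
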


\begin{proof}
	We first verify \eqref{eq:sharpsiegelwalfiszbound} for a single $\beta=\beta_i$. For the general case, it suffices to check that if $\beta_i$ and $\beta_j$ satisfy \eqref{eq:sharpsiegelwalfiszbound}, then so does their convolution $\beta_i*\beta_j$.
	
	Let $\beta=\chi_{N_i}$ $N=N_i\gg X^{\kappa}$. We proceed analogously as to the proof of Lemma \ref{lemma:tinymoduli}. Let $f(x)$ be a function of $C^\infty(-\infty,\infty)$ class such that $0\le f(y)\le 1$,
	\begin{equation}
	f(y) = 1\quad \text{ if }\quad N\le y \le (1+\rho)N,
	\end{equation}
	\begin{equation}
	f(y) = 0\quad \text{ if }\quad 
	y\notin [N-N^{11/12}, (1+\rho)N+N^{11/12}],
	\end{equation}
	and obeying the derivative bound
	\begin{equation} \label{eq:fjDerivativeBound2}
	f^{(j)}(y) \ll N^{-11j/12},\ j\ge 1,
	\end{equation}
	where the implied constant depends on $j$. Let
	\begin{equation}
		F(s) = \int_0^\infty f(x) x^{s-1} dx
	\end{equation}
	denote the Mellin transform of $f(x)$. Let
	\begin{equation}
		\psi(\chi) = \sum_n \beta(n) \chi(n)
	\end{equation}
	and
	\begin{equation}
		\psi^*(\chi) = \sum_n \beta(n) \chi(n) f(n).
	\end{equation}
	Analogously, we have
	\begin{equation}
		\psi^*(\chi) - \psi(\chi)
		= \sum_{N-N^{11/12}\le n\le N} \chi(n)
		+ \sum_{(1+\rho)N\le n\le (1+\rho)N+N^{11/12}} \chi(n)
		\ll N^{11/12}
	\end{equation}
	and
	\begin{equation}
		|L(1/2+it,\chi)| \ll r^{1/4} |s|.
	\end{equation}
	Thus,
	\begin{align}
		\psi^*(\chi)
		&= \frac{1}{2\pi i} \int_{(\frac{1}{2})}
		F(s) L(s,\chi) ds\\
		&=\frac{1}{2\pi i} \int_{|t|<N^{1/12}} F(1/2+it)L(1/2+it,\chi)  ds\\
		&\quad + \frac{1}{2\pi i}\int_{|t|\ge N^{1/12}} F(1/2+it)L(1/2+it,\chi) ds\\
		&\ll N^{1/2} r^{1/4} N^{1/6} + N^{1/2} r^{1/4} N^{1/6}
		\ll N^{2/3} r^{1/4}.
	\end{align}
	Assume $r\ll X^\kappa$. We deduce
	\begin{align}
		\psi(\chi)
		&\ll N^{11/12} + N^{2/3} r^{1/4}
		\ll N \frac{1}{N^{1/12}}
		+ N \frac{r^{1/4}}{N^{1/3}}
		\ll N X^{-\kappa/12} + N \frac{X^{\kappa/4}}{X^{\kappa/3}}
		\ll N X^{-\kappa/12}.
	\end{align}
	
	Now assume $\beta_i$ and $\beta_j$ satisfy \eqref{eq:sharpsiegelwalfiszbound} with $N:=N_iN_j\gg X^{\kappa}$. Write $N_i=X^{\kappa_i}$ and $N_j=X^{\kappa_j}$ so that $\kappa_i + \kappa_j \ge \kappa$. Since $\beta_i$ and $\beta_j$ satisfy \eqref{eq:sharpsiegelwalfiszbound}, we have
	\begin{equation}
		\sum_n \beta_i(n) \chi(n)
		\ll N_i X^{-\kappa_i/12}
	\end{equation}
	and 
	\begin{equation}
		\sum_n \beta_j(n) \chi(n)
		\ll N_j X^{-\kappa_j/12}.
	\end{equation}
	Thus, writing $n$ as $mn$ and separate variables, we get
	\begin{equation}
		\sum_n \beta_i*\beta_j(n) \chi(n)
		= \sum_m \beta_i(m) \chi(m)
		\sum_n \beta_i(n) \chi(n)
		\ll N_i X^{-\kappa_i/12}
		N_j X^{-\kappa_j/12}
		\ll N X^{-\kappa/12}.
	\end{equation}
	This completes the proof of Lemma \ref{lemma:sharpsiegelwalfiszbound}.
\end{proof}

\begin{lemma} \label{lemma:10}
	Let $\beta$ be given as in \eqref{eq:beta}, with $N$ given in \eqref{eq:betaN} satisfying \eqref{eq:betaNbound}. Assume $R\le X^{-\varpi/6}N$. Then for any $q\ge 1$ and $(r,\ell)$, we have
	\begin{equation} \label{eq:946}
	\sum_{r\sim R}\
	\displaystyle\sideset{}{^*}\sum_{\ell (\text{mod } r)}
	\left(
	\sum_{\substack{n\equiv \ell (r)\\ (n,q)=1}} \beta(n) 
	- \frac{1}{\varphi(r)} \sum_{(n,qr)=1} \beta(n)
	\right)^2
	\ll \tau(q)^B N^2 X^{-{\frac{\varpi}{12}}}.
	\end{equation}
\end{lemma}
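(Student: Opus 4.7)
The plan is a direct application of the multiplicative large sieve (Lemma \ref{lemma:largesievemultiplicative}), exploiting the short-support structure of $\beta$ to keep $\|\beta\|_2^2$ small, together with a M\"obius peel-off of the coprimality condition $(n,q)=1$ which is the source of the $\tau(q)^B$ factor. First I would expand the inner discrepancy via the character identity of Lemma \ref{lemma:nonprincipalsum}, applied to the sequence $\gamma(n) := \beta(n)\,\mathbf{1}_{(n,q)=1}$, so that for $(\ell, r) = 1$,
$$\sum_{\substack{n \equiv \ell (r)\\ (n,q)=1}} \beta(n) \;-\; \frac{1}{\varphi(r)} \sum_{(n,qr)=1} \beta(n) \;=\; \frac{1}{\varphi(r)} \sideset{}{'}\sum_{\chi (\text{mod } r)} \overline{\chi}(\ell) \sum_n \gamma(n) \chi(n).$$
Squaring, summing over residues $\ell$ coprime to $r$, and invoking orthogonality of Dirichlet characters collapses the $\ell$-sum to $\frac{1}{\varphi(r)} \sideset{}{'}\sum_\chi \bigl|\sum_n \gamma(n) \chi(n)\bigr|^2$.

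Next I would strip the $q$-coprimality by M\"obius: writing $\mathbf{1}_{(n,q)=1} = \sum_{d\mid (n,q)} \mu(d)$ gives $\sum_n \gamma(n) \chi(n) = \sum_{d\mid q,\,(d,r)=1} \mu(d)\chi(d) \sum_{n'} \beta(dn')\chi(n')$, and a Cauchy--Schwarz over $d \mid q$ introduces a factor $\tau(q)$ and reduces matters to bounding, for each $d\mid q$,
$$\sum_{r \sim R} \frac{1}{\varphi(r)} \sideset{}{'}\sum_{\chi (\text{mod } r)} \left|\sum_{n'} \beta(dn') \chi(n')\right|^2.$$
Invoking Lemma \ref{lemma:primitivesum} to reduce $\chi$ to a primitive $\chi^*$ of conductor $q'\mid r$ with $q' > 1$, and interchanging summation with $r$ (using $\sum_{r \sim R,\, q'\mid r} 1/\varphi(r) \ll (\log R)/\varphi(q')$), the multiplicative large sieve (Lemma \ref{lemma:largesievemultiplicative}) applied dyadically in $q' \sim Q'$ bounds the above by
$$\ll (R + N/d)\, \|\beta(d\,\cdot)\|_2^2\, (\log X)^{O(1)}.$$
Any residual coprimality $(n', r/q') = 1$ induced by the character reduction can be peeled off by a further M\"obius step at a cost of at most $(\log X)^{O(1)}$.

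Finally, I would bound $\|\beta(d\,\cdot)\|_2^2$ using the short support of the convolution $\beta = \beta_{i_1} * \cdots * \beta_{i_\ell}$: its support lies in an interval of length $\asymp \ell \rho N$ with $\rho = X^{-\varpi}$, and since $\beta(n) \le \tau_k(n)$, Lemma \ref{lemma:8} gives $\|\beta(d\,\cdot)\|_2^2 \ll \tau_k(d)^2 \cdot X^{-\varpi} N/d \cdot (\log X)^{O(1)}$. Assembling the pieces and using $R \le X^{-\varpi/6} N \le N$, the total bound is
$$\ll \tau(q)\, N \cdot X^{-\varpi} N \cdot (\log X)^{O(1)} \sum_{d\mid q} \frac{\tau_k(d)^2}{d} \;\ll\; \tau(q)^B N^2 X^{-\varpi + o(1)},$$
which is far stronger than the claimed $\tau(q)^B N^2 X^{-\varpi/12}$. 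The main obstacle is the careful bookkeeping around the coprimality conditions: one must disentangle $(n, q) = 1$ via the outer M\"obius, handle the induced $(n', r/q') = 1$ from character reduction, and verify that Lemma \ref{lemma:8} retains the full $\rho$-saving after restricting to multiples of $d$ in a short interval.
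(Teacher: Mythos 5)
Your overall architecture (character orthogonality, M\"obius peel-off of $(n,q)=1$, reduction to primitive conductors, multiplicative large sieve) tracks the paper's, but you make one substitution that genuinely changes the argument: you exploit the short support of $\beta$ to bound $\|\beta\|_2^2$, whereas the paper does not exploit it at the large-sieve step and must therefore invoke the Siegel--Walfisz substitute (Lemma \ref{lemma:sharpsiegelwalfiszbound}) to rescue the contribution from tiny conductors. Recall the difficulty: after reducing to primitives, the large sieve gives $(Q'^2+N)\|\beta\|_2^2$, so for $Q'\approx 2$ the factor $Q'^2+N\approx N$ yields no saving unless $\|\beta\|_2^2$ itself supplies one. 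The paper's route is to write $\|\beta\|_2^2\ll N\mathcal L^{B}$ crudely, so the small-conductor range needs the separate Mellin-transform bound $\psi(\chi)\ll X^{-\kappa/12}N$ of Lemma \ref{lemma:sharpsiegelwalfiszbound} (that lemma is where the short-interval structure secretly enters, via the smooth cutoff and the convexity bound). Your route is to observe that $\beta=\chi_{N_{i_1}}*\cdots*\chi_{N_{i_\ell}}$ is supported on $[N,(1+\rho)^\ell N)$, an interval of length $\asymp\ell\rho N=X^{-\varpi+o(1)}N$, so Shiu (Lemma \ref{lemma:8}, eq.~\eqref{eq:lemma81}) gives $\|\beta\|_2^2\ll X^{-\varpi}N\mathcal L^{B}$, and then $(Q'^2+N)\|\beta\|_2^2/Q'$ is $\ll X^{-\varpi}N^2\mathcal L^{B}$ already at $Q'\asymp 2$ without any auxiliary Siegel--Walfisz input. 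This is clean and yields $X^{-\varpi+o(1)}$ in place of the stated $X^{-\varpi/12}$, so you are proving more than the lemma claims.

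Two technical points you should nail down before declaring victory. First, after the M\"obius peel-off you need $\|\beta(d\,\cdot)\|_2^2\ll\tau_k(d)^2X^{-\varpi}(N/d)\mathcal L^{B}$; the support of $n'\mapsto\beta(dn')$ is an interval of length $\asymp\ell\rho N/d$ at scale $N/d$, so Shiu applies only while $\ell\rho N/d\gg(N/d)^\epsilon$. For $d$ near or above $\rho N$ the interval holds $O(1)$ integers and the contribution is $\ll X^\epsilon$, which is still within your target, but this split must be written out rather than applying Shiu blindly; and the convergent divisor sum $\sum_{d\mid q}\tau_k(d)^2/d\ll\tau(q)^{B}$ should be verified (it is, since $\prod_{p\mid q}(1+k^2/p)\le(1+k^2)^{\omega(q)}=\tau(q)^{\log_2(1+k^2)}$ for squarefree $q$). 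Second, the residual coprimality $(n',r/q')=1$ from Lemma \ref{lemma:primitivesum} costs you $\tau(r/q')\ll X^\epsilon$ rather than $(\log X)^{O(1)}$ as you wrote, since $r/q'$ can have many small prime factors; this is harmless because the final saving is a power of $X$, but the stated $(\log X)^{O(1)}$ is an overclaim. With these loose ends tied, your proof is correct and in fact sharper; it trades the paper's Siegel--Walfisz lemma (and the $k$-dependent convexity bound hidden inside it) for a purely combinatorial short-interval estimate, which is a genuine simplification.
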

\begin{proof}
	By M\"obius inversion, the condition $(n,q)=1$ may be removed at the cost of removing the $\tau(q)^B$ factor on the right side of \eqref{eq:946}; see, e.g., \cite[p. 21-22]{FriedlanderIwaniec2013}. Thus it suffices to show
	\begin{equation} \label{eq:919}
	\sum_{r\sim R}\
	\displaystyle\sideset{}{^*}\sum_{\ell (\text{mod } r)}
	\Delta(\beta; X, r, \ell)^2
	\ll N^2 X^{-{\varpi/12}}.
	\end{equation}
	By \eqref{eq:nonprincipalsum2}, we have
	\begin{align}
	\Delta(\beta; X, r, \ell)^2
	&= \frac{1}{\varphi(r)^2}
	\left|
	\displaystyle\sideset{}{'}\sum_{\chi (\text{mod } r)}
	\overline{\chi}(a)
	\left(
	\sum_{n\le X} \beta(n) \chi(n)
	\right)
	\right|^2\\
	&=\frac{1}{\varphi(r)^2}
	\displaystyle\sideset{}{'}\sum_{\chi_1 (\text{mod } r)}
	\overline{\chi_1}(a)
	\left(
	\sum_{n\le X} \beta(n) \chi_1(n)
	\right)
	\displaystyle\sideset{}{'}\sum_{\chi_2 (\text{mod } r)}
	\chi_2(a)
	\overline{\left(
		\sum_{n\le X} \beta(n) \chi_2(n)
		\right)}.
	\end{align}
	Summing over primitive $\ell (\text{mod }r)$ and changing the order of summation, we get
	\begin{equation}
	\displaystyle\sideset{}{^*}\sum_{\ell (\text{mod } r)}
	\Delta(\beta; X, r, \ell)^2
	= \frac{1}{\varphi(r)^2}
	\displaystyle\sideset{}{'}\sum_{\chi_1 (\text{mod }r)}\
	\displaystyle\sideset{}{'}\sum_{\chi_2 (\text{mod }r)}
	{\left(
		\sum_{n\le X} \tau_k(n) \chi_1(n)
		\right)}
	\overline{\left(
		\sum_{n\le X} \tau_k(n) \chi_2(n)
		\right)}
	\displaystyle\sideset{}{^*}\sum_{a (\text{mod } r)}
	\overline{\chi_1}(a) \chi_2(a).
	\end{equation}
	By the orthogonality relation
	\begin{equation} \label{eq:928}
	\frac{1}{\varphi(r)} 
	\displaystyle\sideset{}{^*}\sum_{a (\text{mod } r)}
	\overline{\chi_1}(a) \chi_2(a) 
	= \begin{cases}
	1,& \text{if } \chi_1= \chi_2,\\
	0,& \text{if }\chi_1\neq \chi_2,
	\end{cases}
	\end{equation}
	this becomes
	\begin{equation}
	\displaystyle\sideset{}{^*}\sum_{\ell (\text{mod } r)}
	\Delta(\beta; X, r, \ell)^2
	= \frac{1}{\varphi(r)}
	\left|
	\displaystyle\sideset{}{'}\sum_{\chi (\text{mod } r)}
	{\left(
		\sum_{n\le X} \beta(n) \chi(n)
		\right)}
	\right|^2.
	\end{equation}
	We now reduce to primitive characters as in the proof of Proposition \ref{prop:mediummoduli}. By Lemma \ref{lemma:primitivesum}, we have
	\begin{equation}
	\sum_{r\sim R}\
	\displaystyle\sideset{}{^*}\sum_{\ell (\text{mod } r)}
	\Delta(\beta; X, r, \ell)^2
	\ll 
	\log \mathcal{L}
	\sum_{s\le R}
	\frac{1}{s}
	\left(
	\sum_{1<q\le R/s}
	\frac{1}{q}
	\left|\
	\displaystyle\sideset{}{^*}\sum_{\chi (\text{mod } q)}
	\left(
	\sum_{n\le X} \beta(n) \chi(n)
	\right)
	\right|^2
	\right).
	\end{equation}
	We apply the Siegel-Walfisz condition to bound contribution coming from tiny moduli. By Lemma \ref{lemma:sharpsiegelwalfiszbound}, we get, for $1< q\le X^{\varpi/6}$,
	\begin{equation}
	\sum_{1<q\le X^{\varpi/12}}
	\frac{1}{q}
	\left|\
	\displaystyle\sideset{}{^*}\sum_{\chi (\text{mod } q)}
	\left(
	\sum_{n\le X} \beta(n) \chi(n)
	\right)
	\right|^2
	= \sum_{1<q\le X^{\varpi/6}}
	\frac{1}{q}
	\left(
	\varphi(q) X^{-3/96 - 2\varpi/3}N
	\right)^2
	\ll N^2 X^{-1/16}.
	\end{equation}
	Assume $X^{\varpi/6} \ll Q\ll R$. By the large sieve inequality \eqref{eq:largesieveineq} and the bound \eqref{eq:lemma81}, we have
	\begin{equation}
	\frac{1}{Q}
	\sum_{q\sim Q}
	\left| \
	\displaystyle\sideset{}{^*}\sum_{\chi (\text{mod } q)}
	\left(
	\sum_{n\le X} \beta(n) \chi(n)
	\right)
	\right|^2
	\ll \frac{1}{Q}(Q^2 + N)
	\left(\sum_{n \le X} \beta(n) \right)^2
	\ll \left(Q + \frac{N}{Q} \right) N \mathcal{L}^{k-1}.
	\end{equation}
	For $R\le X^{-\varpi/6}N$, this leads to \eqref{eq:919}.
\end{proof}

The next lemma restricts $d$ to moduli that have `well-factorable' property.

\begin{lemma}[Factorization lemma] \label{lemma:4}
	Write
	\begin{equation} \label{eq:D3}
		D_3 = X^{1/2 + 2\varpi}.
	\end{equation}
	Suppose $d$ is square-free such that 
	\begin{equation} \label{eq:factorization1}
	D_2 <d < D_3,
	\end{equation} 
	\begin{equation} \label{eq:factorization2}
		(d,\mathcal{P}_0) < X^\varpi,\quad
		\varpi = 1/1168,
	\end{equation}
	and
	\begin{equation}  \label{eq:factorization3}
		(d,\mathcal{P}_1) > X^{1/8-4\varpi}
	\end{equation} 
	Then, for any $R^*$ satisfying 
	\begin{equation} \label{eq:R*1}
		X^{2\varpi} \le R^*\le X^{45\varpi}
	\end{equation}
	or
	\begin{equation} \label{eq:R*2}
		X^{3/8+7\varpi} \le R^* \le X^{1/2-2\varpi},
	\end{equation}
	there is a factorization $d=qr$ such that $X^{-\varpi}R^* <r < R^*$ and $(q,\mathcal{P}_0) =1$.
\end{lemma}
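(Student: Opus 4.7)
The plan is to decompose $d$ by the sizes of its prime factors and then apply an elementary greedy-divisor argument. Set $d_0 = (d,\mathcal{P}_0)$, $d_1 = (d,\mathcal{P}_1)/d_0$, and $d_2 = d/(d_0 d_1)$, so that $d_0, d_1, d_2$ are pairwise coprime squarefree factors of $d$ built from primes in $(1,D_0]$, $(D_0,D_1]$, and $(D_1,\infty)$ respectively. The hypotheses yield $d_0 < X^\varpi$, $d_0 d_1 > X^{1/8-4\varpi}$ (hence $d_1 > X^{1/8-5\varpi}$), and $d_2 < X^{1/2+2\varpi}/X^{1/8-4\varpi} = X^{3/8+6\varpi}$. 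Because $(q,\mathcal{P}_0)=1$ is equivalent to $d_0\mid r$, the task reduces to finding a divisor $s$ of $d_1 d_2$ in $(X^{-\varpi}R^*/d_0,\ R^*/d_0)$, an interval of ratio $X^\varpi$.

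The key tool is the elementary fact that if $n$ is squarefree with largest prime factor $\le P$, then consecutive divisors of $n$ in sorted order have ratio $\le P$; consequently, every interval $[A,AP]\subseteq[1,n]$ contains a divisor of $n$. For the first range $X^{2\varpi}\le R^*\le X^{45\varpi}$, I would take $s\mid d_1$ and apply this lemma directly: the target interval lies inside $[1,X^{45\varpi}]\subseteq[1,d_1]$ (using $d_1 > X^{1/8-5\varpi} > X^{45\varpi}$), while the primes of $d_1$ are all $\le D_1 = X^\varpi$, matching the interval ratio. The greedy lemma then produces the required $s$.

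For the second range $X^{3/8+7\varpi}\le R^*\le X^{1/2-2\varpi}$, I would absorb all of $d_2$ into $r$ and use $d_1$ to fine-tune; that is, seek $s = d_2 s_1$ with $s_1\mid d_1$ in $(X^{-\varpi}R^*/(d_0 d_2),\ R^*/(d_0 d_2))$. The upper endpoint exceeds $1$ because $d_0 d_2 < X^\varpi \cdot X^{3/8+6\varpi} = X^{3/8+7\varpi}\le R^*$, and the interval ratio $X^\varpi$ once again matches the largest prime of $d_1$; the greedy lemma yields the required $s_1$ as soon as the interval fits inside $[1,d_1]$.

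The main obstacle is verifying this last containment, which boils down to the numerical inequality $R^* \le X^\varpi d_0 d_1 d_2 = X^\varpi d$, i.e.\ $d \ge X^{-\varpi}R^*$. This pits the upper limit $X^{1/2-2\varpi}$ on $R^*$ against the lower bound $D_2 = X^{1/2-1/12(k+1)}$ on $d$ and depends delicately on the chosen exponents $\varpi=1/1168$ and on how $k$ enters through $D_2$; this case analysis is where the sharp choices in the lemma are used, and where I expect the proof to require the most care.
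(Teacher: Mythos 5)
Your approach is essentially the same as the paper's. Both decompose $d = d_0 d_1 d_2$ by prime size (primes $\le D_0$, primes in $(D_0, D_1]$, primes $> D_1$), and both obtain $r$ by accumulating the primes of $d_1$ one at a time onto $d_0$ (for the first $R^*$ range) or onto $d_0 d_2$ (for the second), stopping just before the running product crosses $R^*$. Since each prime of $d_1$ is at most $D_1 = X^\varpi$, the last step loses at most a factor $X^\varpi$, placing $r$ in $(X^{-\varpi}R^*,\,R^*)$; and putting $d_0$ into $r$ is precisely what makes $(q,\mathcal{P}_0)=1$. Your ``smooth divisor in an interval of prescribed ratio'' lemma is this greedy accumulation packaged abstractly; the two proofs are the same argument.

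Your closing concern is the right thing to worry about, and it does not evaporate under careful arithmetic. With $D_2 = X^{1/2-1/12(k+1)}$ and $R^*$ allowed to be as large as $X^{1/2-2\varpi}$, the necessary condition $d \ge X^{-\varpi}R^*$ (which any $r\mid d$ in $(X^{-\varpi}R^*,R^*)$ forces) requires $1/12(k+1)\le 3\varpi = 3/1168$, i.e.\ $k\ge 32$. For $4\le k\le 31$ the stated hypotheses permit $d<X^{-\varpi}R^*$, in which case no divisor of $d$ lies in the target window and the lemma as written is false. The paper's own proof has the same unaddressed point: it asserts the existence of a cutoff index $n''$ with $d_2\prod_{j\le n''}p_j<R^*\le d_2\prod_{j\le n''+1}p_j$, which requires $d_1d_2\ge R^*$, and under the stated bounds that needs $1/12(k+1)\le\varpi$, i.e.\ $k\ge 97$. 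The gap is harmless in context only because the lemma is applied to produce a factorization $d=qr$ with $q\ge 1$ and $r>X^{-\varpi}R^*$, which already forces $d>X^{-\varpi}R^*$; the clean fix is to add $R^*\le X^\varpi d$ to the hypotheses rather than to hope a numerical verification will close the case. So your proposal matches the paper's proof and correctly flags a genuine (if ultimately benign) omission in the statement; you should not expect it to resolve by juggling $\varpi$ and $k$.
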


\begin{proof}
	 Since $d$ is square-free, we may write $d$ as $d=d_0d_1 d_2$ with
	\begin{equation}
		d_0 = (d,\mathcal{P}_0)
	\end{equation}
	\begin{equation} \label{eq:D0}
		d_1
		=\frac{(d,\mathcal{P}_1)}{(d,\mathcal{P}_0)}
		= \prod_{j=1}^n p_j, \quad D_0< p_1 < p_2 < \dots < p_n < D_1,\quad n\ge 2,
	\end{equation}
	and
	\begin{equation}
		d_2 = \prod_{\substack{p|d\\ p> D_1}} p.
	\end{equation}
	We have $d_0< X^\varpi$. By the first inequality in \eqref{eq:R*1}, $R^* \ge X^{2\varpi}$, and there is an $n'<n$ such that
	\begin{equation}
		d_0 \prod_{j=1}^{n'} p_j < R^*
		\quad\text{and}\quad
		d_0 \prod_{j=1}^{n'+1} p_j \ge R^*.
	\end{equation}
	Similarly, by \eqref{eq:factorization2}, we have
	\begin{equation}
		d_2 = \frac{d}{(d,\mathcal{P}_1)} < X^{3/8+6\varpi}.
	\end{equation}
	By the first inequality in \eqref{eq:R*2}, $R^* \ge X^{3/8+7\varpi}$, and there is an $n''<n$ such that
	\begin{equation}
		d_2 \prod_{j=1}^{n''} p_j < R^*
		\quad\text{and}\quad
		d_2 \prod_{j=1}^{n''+1} p_j \ge R^*.
	\end{equation}
	The assertion follows by choosing
	\begin{equation}
		r = d_0r_i,\ i=1,2,
	\end{equation}
	where
	\begin{equation}
		r_1 = d_0 \prod_{j=1}^{n'} p_j
		\quad\text{and}\quad
		r_2 = d_0d_2 \prod_{j=1}^{n''} p_j
	\end{equation}
	and noting that $r_i\ge R^*/p_{n'+1}$.
\end{proof}

\begin{lemma}(\cite[Lemma 9]{Zhang2014} \label{lemma:9})
	Suppose that $H,N\ge 2$ and $(c,d)=1$. Then we have\begin{equation} \label{eq:3.8}
	\sum_{\substack{n\le N\\ (n,d)=1}}
	\min\{H,\|c\overline{n}/d \|^{-1} \}
	\ll (dN)^\epsilon (H+N),
	\end{equation}
	where $\overline{n}/d$ means $a/d (\text{mod }1)$ with $an \equiv 1 (\text{mod } d)$.
\end{lemma}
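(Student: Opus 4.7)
The plan is to reduce to a classical complete sum over reduced residues modulo $d$ by exploiting the $d$-periodicity of $\overline{n}$, and then invoke the standard logarithmic bound for $\sum_{(k,d)=1}\min\{H,\|k/d\|^{-1}\}$.

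First I would note that $\overline{n}$ modulo $d$ depends only on $n$ modulo $d$. Partitioning $1 \le n \le N$ into reduced residue classes modulo $d$, for each class $n\equiv j\pmod d$ with $(j,d)=1$ the summand $\min\{H,\|c\overline{j}/d\|^{-1}\}$ is constant, while the number of $n\le N$ in the class is at most $N/d+1$. This yields
\begin{equation*}
	\sum_{\substack{n\le N\\(n,d)=1}}\min\{H,\|c\overline{n}/d\|^{-1}\}
	\le \Bigl(\tfrac{N}{d}+1\Bigr)\sum_{\substack{j=1\\(j,d)=1}}^{d}\min\{H,\|c\overline{j}/d\|^{-1}\}.
\end{equation*}

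Next, since $(c,d)=1$, the map $j\mapsto k\equiv c\overline{j}\pmod d$ is a bijection of $(\mathbb{Z}/d\mathbb{Z})^\ast$, so the inner sum equals $\sum_{1\le k<d,\,(k,d)=1}\min\{H,\|k/d\|^{-1}\}$. By the symmetry $k\leftrightarrow d-k$, dropping coprimality for an upper bound gives $\le 2\sum_{m=1}^{\lfloor d/2\rfloor}\min\{H,d/m\}$. Since $\|k/d\|^{-1}\le d$ always, I may replace $H$ by $\min(H,d)$, so WLOG $H\le d$; splitting this last sum at the threshold $m=d/H$, the plateau range $m\le d/H$ contributes $\le H\cdot(d/H)=d$ while the hyperbolic tail contributes $\le d\sum_{d/H<m\le d/2}m^{-1}\ll d\log H$. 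Thus the inner sum is $\ll d\log H$.

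Assembling the two steps yields
\begin{equation*}
	\sum_{\substack{n\le N\\(n,d)=1}}\min\{H,\|c\overline{n}/d\|^{-1}\}
	\ll (N+d)\log H
	\ll (N+d)(dN)^\epsilon.
\end{equation*}
When $d\le H+N$ this is already the stated bound. In the complementary regime $d>H+N$, I would combine the trivial bound $\le NH$ with a dyadic decomposition: the number of $n\le N$ with $\|c\overline{n}/d\|^{-1}\in[2^j,2^{j+1})$ is at most $\min(N,\,2d/2^j+1)$, since the corresponding residues $k\equiv c\overline{n}\pmod d$ must lie in a union of two intervals of total length $\ll d/2^j$; summing the resulting estimate $\sum_{j}2^{j+1}\min(N,2d/2^j+1)$ reproduces the same $(N+d)(dN)^\epsilon$ bound.

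The main subtle point is the uniformity in $d$: the residue-class reduction is clean and gives the classical $(N+d)\log H$ estimate, but absorbing the $d$ into $H+N$ requires either the size assumption $d\ll H+N$ (which holds in the intended applications of the lemma) or, for a proof valid uniformly in every regime, a sharper count of the $n$'s for which $c\overline{n}$ is close to a multiple of $d$, e.g.\ via Weil's bound for the incomplete Kloosterman sums $\sum_{n\le N,\,(n,d)=1}e(hc\overline{n}/d)$.
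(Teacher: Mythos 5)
Your residue-class reduction giving $(N+d)\log H$ is correct and settles the lemma when $d\ll H+N$, but the statement must hold uniformly in $d$ — and in this paper, as in Zhang's, the lemma is applied with moduli much larger than the length of summation — so the regime $d\gg H+N$ cannot be waved away, and there your argument has a genuine gap. Your dyadic count $\min(N,\,2d/2^j+1)$ uses only injectivity of $n\mapsto c\overline{n}$ modulo $d$, and as you yourself note it only yields $(N+d)(dN)^\epsilon$, not $(H+N)(dN)^\epsilon$; for instance with $H=N$ and $d\asymp N^{100}$ it gives $O(N^2)$ against a target of $O(N^{1+101\epsilon})$. The Weil route you suggest does not repair this either: after Fourier expansion and completion the Weil bound contributes roughly $d^{1/2+\epsilon}$ per frequency, which already exceeds the trivial bound $N$ once $d\gg N^2$ — exactly the regime in question.

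The missing ingredient is an elementary divisor-counting estimate, which is what Zhang's own proof uses (the paper only cites \cite[Lemma 9]{Zhang2014} without reproducing the argument). If $n\le N$, $(n,d)=1$, and $\|c\overline{n}/d\|\le\delta$ with $0<\delta\le 1/2$, then $mn\equiv c\pmod d$ for a unique integer $m$ with $0<|m|\le\delta d$, hence $mn-c=\ell d$ with $|\ell|\le\delta N+1$; for each of the $O(\delta N+1)$ admissible $\ell$, the nonzero integer $c+\ell d$ admits $O((dN)^\epsilon)$ factorizations as $mn$, and therefore
\begin{equation}
\#\bigl\{n\le N:\ (n,d)=1,\ \|c\overline{n}/d\|\le\delta\bigr\}\ \ll\ (\delta N+1)(dN)^\epsilon.
\end{equation}
Substituting this count (rather than $2\delta d+1$) into your dyadic decomposition with $\delta=2^{-j}$ gives $\sum_{2^j\le H}2^{j+1}(N2^{-j}+1)(dN)^\epsilon\ll(N\log H+H)(dN)^\epsilon\ll(H+N)(dN)^{O(\epsilon)}$, which is the lemma. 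The heart of the matter is that $c\overline{n}$ lying near $0$ modulo $d$ is the multiplicative condition $mn\equiv c$ with $m$ small, and it is the divisor function, not interval length or Kloosterman cancellation, that governs how often this can happen.
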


We quote a crucial bound on an incomplete Kloosterman sum obtained in \cite[Lemma 11]{Zhang2014}.

\begin{lemma}[{\cite[Lemma 11]{Zhang2014}}] \label{lemma:11}
	Suppose that $N\ge 1$, $d_1d_2>10$, and $|\mu(d_1)| = |\mu(d_2)|=1$. Then for any $c_1$, $c_2$, and $\ell$, we have
	\begin{equation} \label{eq:3.9}
		\sum_{\substack{n\le N\\ (n,d_1)=1\\ (n+\ell,d_2)=1}}
		e\left(\frac{c_1\overline{n}}{d_1} + \frac{c_2\overline{(n+\ell)}}{d_2} \right)
		\ll (d_1d_2)^{1/2} \tau(d_1d_2)
		+ \frac{(c_1,d_1) (c_2,d_2) (d_1,d_2)^2 N}{d_1d_2}.
	\end{equation}
	In the case $d_2=1$, \eqref{eq:3.9} becomes a Ramanujan sum
	\begin{equation} \label{eq:3.13}
	\sum_{\substack{n\le N\\ (n,d_1)=1}}
	e_{d_1} (c_1\overline{n})
	\ll d_1^{1/2} \tau(d_1) + \frac{(c_1,d_1)N}{d_1};
	\end{equation}
	see, e.g., \cite[Lemma 6]{BFI1986} for a proof.
\end{lemma}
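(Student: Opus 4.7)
The plan is to use the classical completion-of-sums technique, reducing the incomplete Kloosterman sum to complete exponential sums modulo $D:=\mathrm{lcm}(d_1,d_2)=d_1d_2/(d_1,d_2)$, which can then be handled by Weil's bound. Let $g(n)$ denote the summand on the left of \eqref{eq:3.9} (including the indicator functions $(n,d_1)=1$ and $(n+\ell,d_2)=1$); then $g$ is periodic modulo $D$, so it is natural to expand $\mathbf 1_{1\le n\le N}$ in its discrete Fourier series modulo $D$ and write
\[
  \sum_{n\le N} g(n)\;=\;\frac{1}{D}\sum_{|h|\le D/2} u(h)\,T(h),\qquad T(h):=\sum_{a\bmod D} g(a)\,e_D(ha),
\]
with $u(0)=N$ and $|u(h)|\ll \min(N,D/|h|)$ for $h\ne 0$. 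The problem then splits into understanding $T(0)$ and $T(h)$ for $h\ne 0$.

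For the zero-frequency term, I would use the Chinese remainder theorem to factor $T(0)$ as a product over prime powers $p^a\|D$ of local Ramanujan-type sums, each bounded by the standard estimate $\sum_{(a,p^a)=1}e_{p^a}(ca)\ll (c,p^a)$. At primes simultaneously dividing $d_1$ and $d_2$ the two exponentials collapse onto a common modulus and contribute one extra factor of the local part of $(d_1,d_2)$; assembling the local bounds yields
\[
  |T(0)|\ll (c_1,d_1)(c_2,d_2)(d_1,d_2)^2,
\]
so that the $h=0$ contribution produces exactly the second term $(c_1,d_1)(c_2,d_2)(d_1,d_2)^2 N/(d_1d_2)$ in \eqref{eq:3.9} after multiplying by $N/D$.

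For the nonzero frequencies $T(h)$, I would again invoke CRT to split $T(h)$ into a product of local sums modulo $p^a\|D$. At primes dividing only $d_1$ (or only $d_2$) this local sum is a classical twisted Kloosterman sum modulo $p^a$, and Weil's bound (or its prime-power extension) gives square-root cancellation $\ll p^{a/2}\tau(p^a)$; at primes $p\mid(d_1,d_2)$ the factor is a two-parameter exponential sum of Kloosterman type, which after bringing the two denominators to a common modulus and completing the square still admits the same Weil-type bound up to the appropriate gcd factors. Combining all local contributions yields $|T(h)|\ll (d_1d_2)^{1/2}\tau(d_1d_2)$ uniformly in $h\ne 0$. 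The tail estimate $\frac{1}{D}\sum_{0<|h|\le D/2}\min(N,D/|h|)\ll \log D$ then produces an extra logarithm, which can either be absorbed into $\tau(d_1d_2)$ (via $\log D\ll D^\epsilon\ll \tau(D)^{O(1)}$) or removed altogether by starting from a smoothed truncation of $\mathbf 1_{[1,N]}$.

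The hard part is the local analysis at primes in $(d_1,d_2)$: one must verify that the resulting two-variable exponential sum inherits square-root cancellation, which reduces to a case-by-case analysis using the Weil bound and, at high prime powers, an explicit stationary-phase (Hensel-lifting) argument. Once that local input is in hand, the completion argument cleanly assembles the two terms of \eqref{eq:3.9}, and the special case $d_2=1$ in \eqref{eq:3.13} follows immediately from the $h\ne 0$ sums collapsing into standard Ramanujan sums, as recorded in \cite[Lemma 6]{BFI1986}.
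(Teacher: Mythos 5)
The paper does not actually supply a proof of this lemma; it only cites \cite[Lemma 11]{Zhang2014} and, for the $d_2=1$ case, \cite[Lemma 6]{BFI1986}. So there is no ``paper's own proof'' to compare with. That said, your completion-of-sums plus CRT-factorization plus Weil approach is the standard one and is essentially what the cited source does, so the overall strategy is sound. A few points need repair, though.

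First, your intermediate bound on the zero frequency is off by a factor of $(d_1,d_2)$. With $D=[d_1,d_2]=d_1d_2/(d_1,d_2)$, write $d_0=(d_1,d_2)$, $d_1=d_0t_1$, $d_2=d_0t_2$ (these are pairwise coprime since $d_1,d_2$ are squarefree). Factoring $T(0)$ by CRT over the primes of $D$ gives a Ramanujan factor $\ll(c_1,p)$ at $p\mid t_1$, $\ll(c_2,p)$ at $p\mid t_2$, and the trivial bound $\ll p$ at $p\mid d_0$, so $T(0)\ll(c_1,t_1)(c_2,t_2)\,d_0\le(c_1,d_1)(c_2,d_2)(d_1,d_2)$ — a single power of $(d_1,d_2)$, not two. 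Multiplying by $N/D=N(d_1,d_2)/(d_1d_2)$ then yields exactly the second term of \eqref{eq:3.9}. With your claimed $T(0)\ll(c_1,d_1)(c_2,d_2)(d_1,d_2)^2$ you would instead produce $(d_1,d_2)^3N/(d_1d_2)$, which is not what you asserted it gives. This is a bookkeeping slip, not a structural flaw, but as written your intermediate claim and your stated conclusion are inconsistent.

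Second, the assertion that the tail logarithm can be ``absorbed into $\tau(d_1d_2)$ via $\log D\ll D^\epsilon\ll\tau(D)^{O(1)}$'' is false: $\tau(D)$ can equal $2$ while $\log D$ is unbounded, so no inequality $D^\epsilon\ll\tau(D)^{O(1)}$ holds. The correct direction is $\tau(D)\ll_\epsilon D^\epsilon$. Your fallback (a smoothed truncation of $\mathbf 1_{[1,N]}$, giving rapidly decaying Fourier coefficients so that $\sum_h|u(h)|\ll N$ with no log) does work and is the route to the stated bound. Finally, since $|\mu(d_1)|=|\mu(d_2)|=1$ forces $D$ squarefree, all of your remarks about prime-power extensions of Weil and Hensel lifting are unnecessary: every local modulus here is a single prime, and at primes $p\mid d_0$ the trivial bound $p=p^{1/2}\cdot p^{1/2}$ already suffices because $p^2\mid d_1d_2$, so no two-variable Weil estimate is required there either.
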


This next lemma is the Birch-Bombieri bound.

\begin{lemma}(\cite[Lemma 12]{Zhang2014} \label{lemma:12})
	Let
	\begin{equation}
	T(k;m_1,m_2;q)
	= 
	\sum_{\substack{\ell (q)\\ (\ell(\ell+k),q)=1}}
	\displaystyle\sideset{}{^*}\sum_{t_1 (q)}
	\displaystyle\sideset{}{^*}\sum_{t_2 (q)}	
	e_q\left(\overline{\ell} t_1 - \overline{(\ell+k)} t_2 + m_1\overline{t}_1 - m_2 \overline{t}_2 \right).
	\end{equation}
	Suppose that $q$ is square-free. Then we have	
	\begin{equation}
	T(k;m_1,m_2;q) \ll (k,q)^{1/2} q^{3/2} \tau(q).
	\end{equation}
\end{lemma}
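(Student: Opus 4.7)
The plan is a three-step reduction: first use the Chinese Remainder Theorem (available because $q$ is square-free) to reduce to the case of prime modulus; then for $q=p$ prime, execute the inner sums over $t_1$ and $t_2$ to produce Kloosterman sums; and finally invoke the Birch--Bombieri estimate (which ultimately rests on Deligne's Weil~II) to bound the outer sum over $\ell$.

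For the CRT step, write $q=q_1q_2$ with $(q_1,q_2)=1$, use the identity $e_q(x)=e_{q_1}(\overline{q_2}\,x)\,e_{q_2}(\overline{q_1}\,x)$ together with the bijections $\ell\leftrightarrow(\ell_1,\ell_2)$ and $t_i\leftrightarrow(t_{i,1},t_{i,2})$, and absorb the unit factors $\overline{q_j}$ into the inversion variables $t_i$ (this is legitimate because $t_i$ is a unit and the substitution permutes the summation set). One obtains the multiplicative identity
\[
T(k;m_1,m_2;q_1q_2)=T(k;m_1',m_2';q_1)\,T(k;m_1'',m_2'';q_2)
\]
for suitable unit shifts $m_j',m_j''$ of $m_j$. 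Since the target bound $(k,q)^{1/2}q^{3/2}\tau(q)$ is multiplicative in $q$, it suffices to prove $T(k;m_1,m_2;p)\ll (k,p)^{1/2}p^{3/2}$ for each prime $p$.

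For $q=p$, perform the substitutions $t_1\mapsto \ell t_1$ and $t_2\mapsto -(\ell+k)t_2$, both valid since $\ell(\ell+k)$ is a unit. A direct calculation collapses the inner sums to Kloosterman sums and yields
\[
T(k;m_1,m_2;p)=\sum_{\substack{\ell\,(p)\\ \ell,\ell+k\not\equiv 0}}\mathrm{Kl}(m_1\overline{\ell};p)\,\mathrm{Kl}(m_2\overline{\ell+k};p),
\]
where $\mathrm{Kl}(a;p):=\sideset{}{^*}\sum_{t\,(p)}e_p(t+a\overline{t})$. If $p\mid k$, then $\ell\equiv\ell+k\pmod p$, and the Weil bound $|\mathrm{Kl}(\cdot;p)|\le 2\sqrt{p}$ applied termwise gives $|T|\ll p\cdot p=p^2$, which matches $(k,p)^{1/2}p^{3/2}\tau(p)=2p^2$.

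The main difficulty is the case $p\nmid k$, where one needs genuine square-root cancellation over $\ell$. This is precisely the content of the Birch--Bombieri estimate (first written down in the appendix to Friedlander--Iwaniec's work on the ternary divisor function): interpret the two Kloosterman factors as (minus) the traces of Frobenius on the Kloosterman sheaf $\mathcal{K}\ell_2$---which is geometrically irreducible of rank $2$ and pure of weight $1$, by Deligne and Katz---and view the sum over $\ell$ as the trace of Frobenius on the cohomology of the tensor product of the two pull-backs along $\ell\mapsto m_1\overline{\ell}$ and $\ell\mapsto m_2\overline{\ell+k}$. Under $p\nmid k$, these two maps are geometrically distinct, so the tensor sheaf contains no geometrically constant constituent, and Deligne's Riemann Hypothesis over finite fields yields the bound $\ll p^{3/2}$. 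The heart of the argument, and the main obstacle, is this sheaf-theoretic verification that geometric triviality really fails when $p\nmid k$; the CRT reduction, the change of variables, and the Weil estimate are routine bookkeeping. Assembling the prime-modulus bound through the multiplicativity in Step~1 delivers the claimed inequality $T(k;m_1,m_2;q)\ll (k,q)^{1/2}q^{3/2}\tau(q)$.
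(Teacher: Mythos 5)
The paper gives no proof here; Lemma~\ref{lemma:12} is quoted verbatim from Zhang, whose own argument ultimately rests on the Birch--Bombieri appendix to Friedlander--Iwaniec. Your sketch correctly reconstructs that argument---CRT multiplicativity for square-free $q$, the change of variables collapsing the inner sums to a correlation of two Kloosterman sums over $\ell$, the termwise Weil bound when $p\mid k$, and Deligne's Riemann Hypothesis over finite fields when $p\nmid k$ (your Kloosterman-sheaf tensor formulation being the modern repackaging of the same Birch--Bombieri computation)---so it is essentially the same approach as the cited proof.
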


\section{Proof of the main result Theorem \ref{thm:maintheoremtauk}}

We start with the proof of Theorem \ref{thm:maintheoremtauk} which is the longest of the four. We begin by making some preliminary reductions. Writing	\begin{equation} \label{eq:Q0}
	Q_0 = X^{\frac{1}{12(k+1)}}
\end{equation}
and
\begin{equation} \label{eq:D2}
	D_2 = \frac{X^{1/2}}{Q_0} = X^{\frac{1}{2}-\frac{1}{12(k+1)}},
\end{equation}
we first show that contributions coming from moduli $d\le D_2$, which we call small moduli, are acceptable. (See Remark \ref{remark:D2} below for a discussion on the dependency of $D_2$ on $k$.) The main ingredients in this first step are the multiplicative large sieve inequality \eqref{eq:largesieveineq} in conjunction with Lemma \ref{lemma:tinymoduli} to control contributions from primitive characters with very small moduli.

\begin{prop}\label{prop:mediummoduli}
	Let 
	\begin{equation}\label{eq:gamma}
	\tau_k=\tau_\ell * \tau_s
	\end{equation}
	with $k=\ell+s$, $\tau_\ell$ supported on $[M,2M)$, $\tau_s$ supported on $[N,2N)$, $M,N>X^{1/6(k+1)}$, and $MN=X$. For $D\le D_2$ we have
	\begin{equation}\label{eq:mediummoduli}
	\sum_{d\le D}\max_{(a,d)=1}
	|\Delta(\tau_k;X,d,a)|
	\ll 
	X^{1 - \frac{1}{12(k+2)}}.
	\end{equation}
\end{prop}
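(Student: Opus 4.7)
My plan is to apply the multiplicative large sieve inequality after reducing the discrepancy to primitive characters and then exploiting the factorization $\tau_k=\tau_\ell\ast\tau_s$. By Lemma \ref{lemma:nonprincipalsum} and the triangle inequality,
\begin{equation*}
\sum_{d\le D}\max_{(a,d)=1}|\Delta(\tau_k;X,d,a)|\ll\sum_{d\le D}\frac{1}{\varphi(d)}\sideset{}{'}\sum_{\chi\,(\text{mod } d)}\Big|\sum_{n\le X}\tau_k(n)\chi(n)\Big|.
\end{equation*}
Writing each nonprincipal $\chi$ as the lift of a primitive $\chi^*\pmod q$ with $q\mid d$, $q>1$ (Lemma \ref{lemma:primitivesum}), swapping summations, and using the standard estimate $\sum_{q\mid d,\,d\le D}1/\varphi(d)\ll\mathcal{L}/\varphi(q)$, the task reduces to bounding
\begin{equation*}
\mathcal{L}^{B}\sum_{\substack{Q\text{ dyadic}\\Q\le D}}\frac{1}{Q}\sum_{q\sim Q}\sideset{}{^*}\sum_{\chi\,(\text{mod } q)}\Big|\sum_{n\le X}\tau_k(n)\chi(n)\Big|,
\end{equation*}
the coprimality constraint $(n,d/q)=1$ introduced by the lift being absorbed via M\"obius inversion at $X^{\epsilon}$ cost.

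Next I split the $Q$-sum at the threshold $Q_\ast:=X^{1/4(k+2)}$. In the \emph{tiny regime} $Q\le Q_\ast$, where the large sieve is too weak, I invoke Lemma \ref{lemma:tinymoduli} (whose hypothesis $q<X^{1/3(k+1)}$ is satisfied since $4(k+2)>3(k+1)$) to bound each character sum by $X^{1-1/3(k+2)}$. With $O(Q^2)$ primitive characters at scale $Q$, the dyadic contribution is $\ll Q\cdot X^{1-1/3(k+2)}$, and summing over $Q\le Q_\ast$ yields $\ll X^{1/4(k+2)+1-1/3(k+2)}\mathcal{L}^{B}=X^{1-1/12(k+2)}\mathcal{L}^{B}$; the threshold $Q_\ast$ is chosen precisely so that these two exponents balance.

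In the \emph{medium regime} $Q_\ast<Q\le D_2$, I use the given factorization to write
\begin{equation*}
\sum_{n\le X}\tau_k(n)\chi(n)=A(\chi)B(\chi),\qquad A(\chi)=\sum_{m\sim M}\tau_\ell(m)\chi(m),\quad B(\chi)=\sum_{n\sim N}\tau_s(n)\chi(n),
\end{equation*}
apply Cauchy-Schwarz over $\chi$, and then the large sieve (Lemma \ref{lemma:largesievemultiplicative}) to each factor, combined with the moment estimate $\sum_{m\sim M}\tau_\ell(m)^2\ll M\mathcal{L}^{\ell^2-1}$ from Lemma \ref{lemma:8}. This yields
\begin{equation*}
\sum_{q\sim Q}\sideset{}{^*}\sum_\chi|A(\chi)B(\chi)|\ll X^{1/2}\sqrt{(Q^2+M)(Q^2+N)}\,\mathcal{L}^{B}.
\end{equation*}
Expanding the square root and dividing by $Q$ leaves three terms $QX^{1/2}$, $X^{1/2}\sqrt{M+N}$, and $X/Q$; using $MN=X$ and $M,N\ge X^{1/6(k+1)}$ (so $\max(M,N)\le X^{1-1/6(k+1)}=D_2^2$) together with $Q_\ast<Q\le D_2=X^{1/2-1/12(k+1)}$, each of these is $\ll X^{1-1/12(k+1)}$, comfortably dominating the target $X^{1-1/12(k+2)}$ with power savings to absorb the logarithms. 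The main obstacle is the tiny regime: since the large sieve is essentially useless for such small $q$, the entire saving there must come from the direct convexity-based estimate in Lemma \ref{lemma:tinymoduli}, which here plays the role of an effective Siegel-Walfisz-type substitute for $\tau_k$.
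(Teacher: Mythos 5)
Your approach is essentially the same as the paper's: reduce to primitive characters via Lemmas \ref{lemma:nonprincipalsum} and \ref{lemma:primitivesum}, split the conductor range at a threshold, bound tiny conductors by Lemma \ref{lemma:tinymoduli}, and bound the remaining range via the factorization $\tau_k=\tau_\ell*\tau_s$, Cauchy--Schwarz, and the large sieve.

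The one flaw is the threshold choice $Q_*=X^{1/4(k+2)}$. By balancing the two regimes exactly, your tiny-conductor contribution comes out as $X^{1-1/12(k+2)}\mathcal{L}^{B}$, with no power of $X$ left over to absorb the $\mathcal{L}^{B}$ accumulated from the preliminary reductions (reduction to primitive characters, dyadic summation over $q$ and over the complementary divisor $r$, Shiu's bound). As written, this exceeds the target $X^{1-1/12(k+2)}$. The paper takes the smaller threshold $Q_0=X^{1/12(k+1)}$: the tiny regime then contributes $\ll Q_0^{2}X^{1-1/3(k+1)+\epsilon}=X^{1-1/6(k+1)+\epsilon}\ll X^{1-1/6(k+2)}$, a genuine power saving, while the worst term $X^{1/2}\cdot X^{1/2}/Q$ in the medium regime becomes $\ll X^{1-1/12(k+1)+\epsilon}\ll X^{1-1/12(k+2)}$, also a power saving. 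Any threshold strictly between $X^{1/12(k+2)}$ and $X^{1/4(k+2)}$ works; the point is to leave a strict power margin in \emph{both} regimes so the logarithms are harmless. Replacing your $Q_*$ by, say, $X^{1/6(k+2)}$ repairs the argument with no other changes.
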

\begin{proof}
	As mentioned above, we estimate each $|\Delta(\tau_k;X,d,a)|$ directly using the large sieve inequality \eqref{eq:largesieveineq}. By Lemma \ref{lemma:nonprincipalsum}, we first reduce the task of estimating $|\Delta(\tau_k;X,d,a)|$ to a sum over nonprincipal characters, then, with Lemmas \ref{lemma:primitivesum} and \ref{lemma:tinymoduli} and the factorization
	\begin{equation}
	d=qr,
	\end{equation}
	we further reduce this sum to one involving only primitive characters, to which, we apply the large sieve inequality \eqref{eq:largesieveineq} to obtain \eqref{eq:mediummoduli}.
	
	By Lemma \ref{lemma:nonprincipalsum}, the left side of \eqref{eq:mediummoduli} is
	\begin{equation}
	\le \sum_{d\le D} \frac{1}{\varphi(d)} \sideset{}{'}\sum_{\chi(\text{mod }d)} \left| \sum_{n\le X} \tau_k(n)\chi(n) \right|.
	\end{equation}
	By the bound
	$\frac{1}{\varphi(d)}\ll \frac{\log\mathcal{L}}{d}$ and Lemma \ref{lemma:primitivesum}, this is
	\begin{align}
	&\ll \log\mathcal{L} \sum_{d\le D}\frac{1}{d} \sum_{\substack{d=qr\\ q>1}}
	\sideset{}{^*}\sum_{\ \chi^*(\text{mod } q)}
	\left| \sum_{\substack{n\le X\\ (n,r)=1}} \tau_k(n)\chi^*(n) \right|\\
	&= \log\mathcal{L} \sum_{r\le D}\frac{1}{r}
	\left(\sum_{1<q\le D/r} \frac{1}{q} \sideset{}{^*}\sum_{\chi(\text{mod } q)}\left|\sum_{\substack{n\le X\\ (n,r)=1}} \tau_k(n)\chi(n) \right| \right).
	\end{align}
	The sum $\sum_{r\le D}\frac{1}{r}$ contributes a factor of $\log D$. Thus, to show \eqref{eq:mediummoduli}, it suffices to show that, for each fixed $r\le D$,
	\begin{equation}\label{eq:sumoverprimitive}
	\sum_{1<q\le D/r} \frac{1}{q}\sideset{}{^*}\sum_{\chi(\text{mod } q)}\left|\sum_{\substack{n\le X\\ (n,r)=1}} \tau_k(n)\chi(n) \right|
	\ll 
	X^{1- 1/12(k+1)}.
	\end{equation}
	
	Fix an $r\le D$. Recall $Q_0$ given as in \eqref{eq:Q0}. We split the range of primitive conductors $q\in (1, D/r]$ into two, one over $q< Q_0$ and the other over $Q_0 \le q \le D/r$, with the intention of applying Lemma \ref{lemma:tinymoduli} to the former and large sieve inequality \eqref{eq:largesieveineq} to the latter.
	
	By Lemma \ref{lemma:tinymoduli}, we have, for $q\le Q_0$,
	\begin{equation}
		\sum_{n\le X} \tau_k(n) \chi(n)
		\ll X^{1-1/3(k+1)}.
	\end{equation}
	And since the number of characters with modulus less than $Q_0$ is at most $Q_0^2$, we get
	\begin{equation}
		\sum_{1<q< Q_0} \frac{1}{q}\ \sideset{}{^*}\sum_{\chi(\text{mod } q)}\left|\sum_{(n,r)=1} \tau_k(n)\chi(n) \right|
		\ll 
		Q_0^2 X^{1-1/3(k+1)}
		\sum_{1<q< D_0} \frac{1}{q}	
		\ll X^{1-1/6(k+2)}.	
	\end{equation}
	
	Thus, to prove \eqref{eq:sumoverprimitive} it is sufficient to show
	\begin{equation} \label{eq:97}
	\frac{1}{Q}\sum_{Q\le q\le 2Q}
	\sideset{}{^*}\sum_{\ \chi(\text{mod } q)}
	\left|\sum_{(n,r)=1} \tau_k(n)\chi(n) \right|
	\ll 
	X^{1- 1/12(k+1)}
	\end{equation}
	for any $Q_0 \le Q\le  D$. By \eqref{eq:gamma}, we have
	\begin{equation}\label{eq:crucialdecomposition}
	\sum_{(n,r)=1} \tau_k(n)\chi(n)
	=\left(\sum_{(m,r)=1} \tau_\ell(m)\chi(m) \right) 
	\left(\sum_{(n,r)=1} \tau_s(n)\chi(n) \right).
	\end{equation}
	By \eqref{eq:crucialdecomposition} and Cauchy's inequality, the left side of \eqref{eq:97} is
	\begin{equation}
	\le \frac{1}{Q}
	\left(\sum_{Q\le q\le 2Q} \sideset{}{^*}\sum_{\ \chi(\text{mod }q)} \left|\sum_{(m,r)=1} \tau_\ell(m)\chi(m) \right|^2 \right)^{1/2}
	\left(\sum_{Q\le q\le 2Q} \sideset{}{^*}\sum_{\ \chi(\text{mod }q)} \left|\sum_{(n,r)=1} \tau_s(n)\chi(n) \right|^2 \right)^{1/2}.
	\end{equation}
	By the large sieve inequality \eqref{eq:largesieveineq}, the above quantity is
	\begin{align}
	&\le \frac{1}{Q}\left( (Q^2+M) M^{2\epsilon}
	\right)^{1/2} 
	\left( (Q^2+N) N^{2\epsilon} \right)^{1/2}	=\frac{1}{Q}(Q^2+M)^{1/2}(Q^2+N)^{1/2}  X^{1/2+\epsilon}.
	\end{align}
	By the inequality $\sqrt{x+y}< \sqrt{x} + \sqrt{y}$, the above is bounded by
	\begin{align}
	&< \frac{1}{Q}(Q+\sqrt{M})(Q+\sqrt{N})
	X^{1/2+\epsilon}
	=\frac{1}{Q} (Q^2+Q\sqrt{M} + Q\sqrt{N} + X^{1/2})
	X^{1/2+\epsilon}\\
	\label{eq:almostthere}
	&= \left(Q+\sqrt{M} + \sqrt{N} +\frac{X^{1/2}}{Q} \right) 
	X^{1/2+\epsilon}.
	\end{align}
	Since $Q_0<Q<D_2$ and $N,M>X^{1/6(k+1)}$, we have
	\begin{equation}
	\begin{cases}
	Q<D_2=X^{1/2-1/12(k+1)},\\
	\sqrt{M} =\sqrt{\dfrac{X}{N}} < \sqrt{\dfrac{X}{X^{1/6(k+1)}}} 
	< X^{1/2-1/12(k+1)},\\
	\sqrt{N} =\sqrt{\dfrac{X}{M}} < \sqrt{\dfrac{X}{X^{1/6(k+1)}}} 
	< X^{1/2-1/12(k+1)},\\
	\dfrac{X^{1/2}}{Q} < \dfrac{X^{1/2}}{Q_0} = X^{1/2-1/12(k+1)}.
	\end{cases}
	\end{equation}
	Combining the above estimates, \eqref{eq:almostthere} is
	\begin{equation}
	\ll	
	X^{1-1/12(k+1) +\epsilon}
	\ll
	X^{1- 1/12(k+2)}.
	\end{equation}
	This leads to \eqref{eq:mediummoduli}.
\end{proof}

Thus, by Proposition \ref{prop:mediummoduli}, Theorem \ref{thm:maintheoremtauk} holds for $1\le d \le D_2$. From this, to prove \eqref{eq:tauk}, it suffices to show that
\begin{equation} \label{eq:tauk2}
	\sum_{\substack{d\in \mathcal{D}\\ D_2 < d < D_3}}
	|\Delta(\tau_k;X,d,a)|
	\ll X^{1-\theta_k}.
\end{equation}
The rest of this section is devoted to proving the estimate \eqref{eq:tauk2}.

\begin{remark} \label{remark:D2}
	The cutoff parameter $D_2$ introduced in \eqref{eq:D2} separating small and large moduli unfortunately has a dependence on $k$. We are unable to resolve this dependency without appealing to GRH or the Lindel\"of Hypothesis. This dependency on $k$ is the result of the convexity bound \eqref{eq:Lkbound} of $L(s,\chi)^k$ on the critical line which depends on $k$. On GRH, the bound on $L(1/2+it,\chi)^k$ can be made uniform in $k$; see Section \ref{section:proofOfUniformPowerSavings} below for more.
\end{remark}

\subsection{Combinatorial argument}

The goal of this subsection is to apply combinatorial arguments to reduce the proof of \eqref{eq:tauk2} to showing that
\begin{equation}\label{eq:6.2}
	\sum_{\substack{d\in \mathcal{D}\\ D_2 < d < D_3}}
	|\Delta(\gamma;X,d,a)| 
	\ll X^{1-\theta_k},
\end{equation}
for suitable $\gamma$. Combinatorial arguments amount to estimating $\sum_{\substack{n\le X\\ n\equiv a (\text{mod } d)}} \tau_k(n)$ by a sum of $\sum_{n \equiv a (d)} \gamma(n)$, where each $\gamma$ is of the form
\begin{equation}
\gamma=\beta_1*\beta_2*\dots * \beta_k,
\end{equation}
a convolution of simpler arithmetic functions $\beta_j$.

Following the fundamental work of Friedlander and Iwaniec in their treatment of the ternary divisor function $\tau_3(n)$ in \cite[Section 3]{FriedlanderIwaniec1985}, after decomposing the interval $[1,X]$ to $O(\mathcal{L}^B)$ dyadic intervals of the form $[N,2N)$, we perform a finner-than-dyadic subdivision of the interval $[N,2N)$ as follows. Let 
\begin{equation} \label{eq:rho}
	\rho=X^{-\varpi}.
\end{equation}
Let $R$ be the largest positive integer $r$ for which $(1+\rho)^r < 2x$. We have the following bound for $R$:
\begin{equation}
	R \le \frac{\log 2x}{\log (1+\rho)}
	\ll \rho^{-1} \log X.
\end{equation}
For $n\sim N$, we have $\tau_k(n) = T_1(n)$ where
\begin{equation} \label{eq:6.6}
	T_1(n)
	= \sum_{\mathcal{N} = (N_1,N_2,\cdots, N_k)} 
	\chi_{N_k}*\chi_{N_{k-1}}* \cdots
	* \chi_{N_1}.
\end{equation}
Here $N_1,N_2,\dots, N_k\ge 1$ run over the powers of $1+\rho$ satisfying
\begin{equation} \label{eq:6.4}
	[N_k \cdots N_1, (1+\rho)^k N_k \cdots N_1) \cap [N,2N) 
	\neq \emptyset.
\end{equation}
Let $T_2$ have the same expression as $T_1$ but with the constraint \eqref{eq:6.4} replaced by
\begin{equation} \label{eq:6.5}
	[N_k \cdots N_1, (1+\rho)^k N_k \cdots N_1)
	\subset [N,2N).
\end{equation}
Since $T_1-T_2$ is supported on $[(1+\rho)^{-k}N, (1+\rho)^k N]$ and $(T_1-T_2)(n) \ll \tau_k$, by Lemma \ref{lemma:8} we have
\begin{equation}
	\sum_{\substack{d\in \mathcal{D}\\ D_2 < d < D_3}}
	|\Delta(T_1 - T_2;X,d,a)| 
	\ll X^{1-\varpi^2}.
\end{equation}
Let $\gamma$ be of the form
\begin{equation}
	\gamma = \chi_{N_k}*\chi_{N_{k-1}}* \cdots.
	* \chi_{N_1}
\end{equation}
with $N_k, \dots, N_1$ satisfying \eqref{eq:6.5} and $N_k \le \cdots \le N_1$. Write $N_i=X^{\nu_i}$. We have
\begin{equation} \label{eq:7}
	0\le \nu_k \le \dots \le \nu_1,
\end{equation}
and
\begin{equation} \label{eq:8}
	0\le \nu_k+\dots + \nu_1 < 1- k\frac{\log \rho}{\mathcal{L}}.
\end{equation}
We deduce the proof of \eqref{eq:tauk2} from the following
\begin{prop} \label{prop:2}
	With the same notation as above, we have
	\begin{equation} \label{eq:6.2b}
		\sum_{\substack{d\in \mathcal{D}\\ D_2 < d < D_3}}
		|\Delta(\gamma;X,d,a)| 
		\ll X^{1-\varpi^{7/2}}.
	\end{equation}
\end{prop}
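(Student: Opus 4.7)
The plan is to adopt the Zhang-type trichotomy on the exponent vector $(\nu_1, \ldots, \nu_k)$ outlined in Section \ref{section:sketchOfProof}. Writing $N_i = X^{\nu_i}$ with $\nu_1 \ge \cdots \ge \nu_k$ and $\nu_1 + \cdots + \nu_k$ close to $1$ by \eqref{eq:7}--\eqref{eq:8}, I would split the admissible tuples into three regimes---Cases (a), (b), (c) below---each contributing $\ll X^{1 - \varpi^{7/2}}$ to the sum in \eqref{eq:6.2b}. A key preliminary observation is that Lemma \ref{lemma:4} applies to every $d \in \mathcal{D}$ (the constraints $(d,\mathcal{P}_0) < X^\varpi$ and $(d,\mathcal{P}_1) > X^{71/584}$ are imposed precisely for this), so each $d$ in the range $D_2 < d < D_3$ admits a factorization $d = qr$ with $r$ in either of the target windows \eqref{eq:R*1} or \eqref{eq:R*2}.

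Case (a) is the trivial regime, where some $N_i$ (or a partial product) is so large that it exceeds the moduli range, so the discrepancy $\Delta(\gamma;X,d,a)$ can be bounded directly by combining Lemma \ref{lemma:8} with the truncated Poisson formula of Lemma \ref{lemma:7}. Case (b), the Type I/II regime, is the configuration in which one can find a subset $S \subseteq \{1, \ldots, k\}$ such that $\prod_{i \in S} N_i$ lies in the window $[X_1, X_2]$. Writing $\gamma = \alpha * \beta$ with $\beta$ the partial convolution over $S$, Lemma \ref{lemma:sharpsiegelwalfiszbound} supplies the Siegel-Walfisz hypothesis for $\beta$. Using Lemma \ref{lemma:4} with $R^*$ in the range \eqref{eq:R*1} to factor $d = qr$, the dispersion method then reduces the problem to an incomplete two-variable Kloosterman sum, bounded via Lemma \ref{lemma:11}; Lemma \ref{lemma:9} handles the smooth-variable averaging that appears along the way.

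Case (c) is the Type III regime, the core of the argument: when no favorable bipartition exists, the ordering \eqref{eq:7} and the mass constraint \eqref{eq:8} force at least three groupings of the $\chi_{N_i}$ of sizes in $(X_1, X_2) = (X^{3/8+8\varpi}, X^{1/2-4\varpi})$. Here I would invoke Lemma \ref{lemma:4} in the range \eqref{eq:R*2} to write $d = qr$ with $r$ of size $\asymp X^{3/8+7\varpi}$ and $(q,\mathcal{P}_0) = 1$. Completing sums via Lemma \ref{lemma:7} and applying Cauchy-Schwarz in the smooth variable, the problem collapses to a triple correlation sum modulo $qr$. The CRT splits this into a piece modulo $q$ (controlled by the Ramanujan-type cancellation of the degenerate case of Lemma \ref{lemma:11}) and a piece modulo $r$, which is the Birch-Bombieri correlation sum bounded by Lemma \ref{lemma:12}. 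The combined square-root savings in $q$ and $r$, integrated against the size of the factorization, yield a power saving in $r$ and hence in $X$.

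The main obstacle I anticipate is Case (c). Firstly, one must verify combinatorially, and uniformly in $k$, that whenever no bipartition of $\{1,\ldots,k\}$ places a partial product in $[X_1, X_2]$, the ordered tuple $(\nu_1, \ldots, \nu_k)$ still admits three groupings with sizes concentrated in that window---this is subtle because the configuration space is $k$-dimensional. Secondly, once the factorization $d = qr$ is fixed, the bookkeeping required to match the exponents in the Birch-Bombieri bound Lemma \ref{lemma:12} against the losses from Cauchy-Schwarz and the completion of sums is delicate, and is what ultimately determines the concrete saving $\varpi^{7/2}$.
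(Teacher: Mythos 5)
Your high-level plan---the Zhang trichotomy on the exponent vector combined with the well-factorability of $d\in\mathcal{D}$ from Lemma \ref{lemma:4}---matches the paper's strategy. However, the details of your Cases (b) and (c) are inverted in a way that the argument would not close up.

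First, the assignment of $R^*$ ranges is backwards. The paper uses the \emph{small} range \eqref{eq:R*1} ($X^{2\varpi}\le R^*\le X^{45\varpi}$) for the Type III case and the \emph{large} range \eqref{eq:R*2} for the Type I/II case, i.e. the opposite of what you propose. In the Type III argument the paper Weyl-shifts by multiples of $r$ and then completes the $n_3$-sum modulo $r$; for this the paper needs $N_3'/r$ large (see \eqref{eq:14.4}) and $Kr$ to be absorbable (see \eqref{eq:13.14}--\eqref{eq:13.15}), both of which force $r$ to be only a tiny power of $X$, around $X^{44\varpi}$. Consequently your claim that $r\asymp X^{3/8+7\varpi}$ in Type III would make the diagonal and shift terms dominate. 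Second, and related, your CRT split in Type III is reversed: the paper obtains the Ramanujan sum $C_r(s_2-s_1)$ modulo the \emph{small} $r$ and applies the Birch--Bombieri bound of Lemma \ref{lemma:12} modulo the \emph{large} $q$ (see \eqref{eq:14.7} and \eqref{eq:1214}); you assign Birch--Bombieri to $r$ and Ramanujan to $q$, which would give a saving of the wrong order. Third, the combinatorial claim that in the Type III case the ordering \eqref{eq:7} and mass constraint \eqref{eq:8} force ``three groupings of sizes in $(X_1,X_2)$'' is not what holds. The paper's Lemma \ref{lemma:combinatorialLemma} gives only $\nu_2+\nu_3 > 5/8-8\varpi$, from which one derives $MN_1\ll X^{3/8+8\varpi}$ together with lower bounds $N_1,N_2\ge X^{5/16-4\varpi}$ and $N_3\ge X^{1/4-16\varpi}$---none of these products or factors land in $(X_1,X_2)$, and the argument does not require them to. Finally, in Case (a) the paper uses only the Siegel--Walfisz substitute Lemma \ref{lemma:sharpsiegelwalfiszbound} plus the trivial Shiu bound; the truncated Poisson formula of Lemma \ref{lemma:7} is not needed there and first enters in the large-moduli Cases (b) and (c).
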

In view of \eqref{eq:7}, the proof of \eqref{eq:6.2b} is divided into three cases:

\bigskip
\textbf{Case (a):}
\begin{equation}
	\nu_1\ge \frac{5}{8}-8\varpi.
\end{equation}

\textbf{Case (b): }
\begin{equation} \label{eq:9}
	\nu_1< \frac{5}{8}-8\varpi
\end{equation}
and 
\begin{equation} \label{eq:10}
\sum_{i\in I} \nu_i
\notin
\left[\frac{3}{8}+8\varpi, \frac{5}{8} -8\varpi \right]
\end{equation}
for any subset $I$ of $\{1,2,\dots, k\}$.

\textbf{Case (c):} $$\nu_1< \frac{5}{8} -8\varpi$$ and there is a subset $I$ of $\{1,2,\dots,k \}$ such that
\begin{equation}
\sum_{i\in I} \nu_i
\in
\left[\frac{3}{8}+8\varpi, \frac{5}{8} -8\varpi \right].
\end{equation}

By Lemma \ref{lemma:sharpsiegelwalfiszbound}, all the choices of $\beta_i=\chi_{N_i}$ above satisfy the Siegel-Walfisz condition \eqref{eq:sharpsiegelwalfiszbound}. Noting that the sum in \eqref{eq:6.6} contains $R \ll \rho^{-1} \log X\ll X^{\varpi^{15/16}}$ terms, by the above discussion, we conclude that \eqref{eq:6.2b} implies \eqref{eq:tauk2}.

We start with the proof of Case (a), the simplest of the three.

\subsection{Proof of Case (a)}

This is the simplest case of the three. Let
\begin{equation}
	\beta = \beta_1,\quad N=N_1,
\end{equation}
and 
\begin{equation}
	\alpha = \beta_2*\cdots * \beta_k,
	\quad
	M= N_2\cdots N_k,
\end{equation}
so that $\gamma = \alpha*\beta$. Since $\nu_1\ge 5/8-8\varpi$ in this case, by Lemma \ref{lemma:sharpsiegelwalfiszbound} with $\kappa=X^{5/8-8\varpi}$, we have
\begin{equation}
	\Delta(\beta; X, d, a)
	\ll X^{5/16-4\varpi} N.
\end{equation}
By definition of $\Delta(\gamma;X,d,a)$ and bounding
\begin{equation}
	\sum_{m \sim M} \alpha(m)
	\ll \sum_{m \sim M} \tau_k(m)
	\ll M \mathcal{L}^B
\end{equation}
trivially, we get
\begin{equation}
	\Delta(\gamma;X,d,a)
	\le \sum_{(m,d)=1}
	\left| \sum_{m \sim M}
	\alpha(m)\right| 
	|\Delta(\beta;X,d, a\overline{m})|
	\ll M \mathcal{L}^B
	\max_{(a,d)=1} \Delta(\alpha;X,d,a)
	\ll X^{5/16-5\varpi}.
\end{equation}
Thus,
\begin{equation}
	\sum_{d\le D_3} \Delta(\gamma;X,d,a)
	\ll X^{5/16-5\varpi} D_3
	\le X^{13/16 - 3\varpi}.
\end{equation}
This leads to \eqref{eq:6.2}.

\subsection{Proof of Case (b)} This case corresponds to the Type III estimate in \cite[\S\S 13, 14]{Zhang2014}, which we will follow closely. The main tool we need is the Birch-Bombieri bound from Lemma \ref{lemma:12}. We will reduce $\Delta(\gamma; X, d, a)$ into an exponential sum of that form; see \eqref{eq:1214}. Write $\alpha=\beta_4*\cdots* \beta_k$ so that
\begin{equation}
\gamma=\alpha*\beta_1*\beta_2*\beta_3.
\end{equation}
Let
\begin{equation}
M= N_4\cdots N_k.
\end{equation}
Note that $\alpha*\beta_1$ is supported on $[MN_1,2MN_1)$. We have the following lemma.
\begin{lemma} \label{lemma:combinatorialLemma}
	Suppose that
	\begin{equation} \label{eq:comblemmaeq1}
	\nu_1 < \frac{5}{8} - 8\varpi
	\end{equation}
	and, for any subset $I\subseteq \{1,2,\cdots, k\}$,
	\begin{equation} \label{eq:comblemmaeq2}
	\sum_{i\in I} \nu_i 
	\notin \left[
	\frac{3}{8}+8\varpi, \frac{5}{8} - 8\varpi
	\right].
	\end{equation}
	Then we have
	\begin{equation}
	\nu_3+ \nu_2 > \frac{5}{8} - 8\varpi.
	\end{equation}
\end{lemma}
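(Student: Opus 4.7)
The plan is to argue by contradiction. Suppose $\nu_2 + \nu_3 \le U$, where I abbreviate $L := 3/8 + 8\varpi$ and $U := 5/8 - 8\varpi$. Applying hypothesis \eqref{eq:comblemmaeq2} with $I = \{2,3\}$ upgrades the assumption to $\nu_2 + \nu_3 < L$. Since the $\nu_i$ are non-increasing by the convention in \eqref{eq:7}, each $\nu_i$ with $i \ge 2$ satisfies $\nu_i \le \nu_2 < L$; applying \eqref{eq:comblemmaeq2} to the singleton $I = \{1\}$ together with \eqref{eq:comblemmaeq1} ($\nu_1 < U$) yields $\nu_1 < L$ as well. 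Hence every $\nu_i < L$.

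Next I would examine the increasing sequence of partial sums $S_m := \nu_2 + \nu_3 + \cdots + \nu_{m+1}$ for $m = 0, 1, \ldots, k-1$, with $S_0 := 0$. In Case (b) of Proposition \ref{prop:2} it suffices to handle $\gamma$ whose support is close to $X$ (contributions from dyadic ranges $N$ much smaller than $X$ are absorbed by the trivial bound needed for \eqref{eq:6.2b}), so $\sigma := \sum_i \nu_i$ exceeds $2L$. Then $S_{k-1} = \sigma - \nu_1 > 2L - L = L$, and applying \eqref{eq:comblemmaeq2} to $I = \{2,\ldots,k\}$ forces $S_{k-1} > U$. Since $S_0 = 0 < L < U < S_{k-1}$, there is a smallest index $j \ge 1$ with $S_j > U$; by \eqref{eq:comblemmaeq2} we then have $S_{j-1} < L$, and therefore the new summand satisfies $\nu_{j+1} = S_j - S_{j-1} > U - L$.

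The crux is a short counting step that bounds $j$: by the ordering, each of the $j-1$ summands $\nu_2,\ldots,\nu_j$ making up $S_{j-1}$ is $\ge \nu_{j+1} > U - L$, so $(j-1)(U - L) \le S_{j-1} < L$, giving $j - 1 < L/(U-L)$. A direct computation shows
\[
    \frac{L}{U - L} \;=\; \frac{3/8 + 8\varpi}{1/4 - 16\varpi} \;<\; 2 \quad \Longleftrightarrow\quad \varpi < 1/320,
\]
which holds since $\varpi = 1/1168$. Therefore $j \in \{1, 2\}$. The case $j = 1$ would force $\nu_2 = S_1 > U$, contradicting $\nu_2 \le \nu_1 < U$. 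Thus $j = 2$, and $\nu_2 + \nu_3 = S_2 > U$, contradicting the starting assumption.

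The only nontrivial ingredient is the numerical inequality $L/(U-L) < 2$, which is precisely the kind of constraint that singles out the particular choice $\varpi = 1/1168$ (or any sufficiently small value) at this step; the rest is a straightforward greedy/pigeonhole construction of a subset sum inside the forbidden interval $[L,U]$, so no substantial obstacle arises beyond confirming the numerics.
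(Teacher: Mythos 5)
Your proof is correct and takes essentially the same approach as the paper: both locate the jump of the increasing partial sums $\nu_2 + \cdots + \nu_m$ across the forbidden interval $[L,U]$ with $L=\tfrac38+8\varpi$, $U=\tfrac58-8\varpi$, both exploit the ordering $\nu_2\ge\nu_3\ge\cdots$, and both turn on the numerical condition $L/(U-L)<2$, i.e.\ $\varpi<1/320$. The only cosmetic difference is one of bookkeeping — you bound the \emph{index} $j$ of the jump and conclude $j=2$, whereas the paper bounds the \emph{size} $\nu_\ell>U-L$ of the term at the jump and then uses $\nu_2,\nu_3\ge\nu_\ell$; these are two accountings of the same pigeonhole argument (and your explicit remark that one must tacitly have $\sum_i\nu_i$ close to $1$, from restricting to $N$ near $X$, correctly fills in a hypothesis the paper leaves implicit when it invokes \eqref{eq:8}).
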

\begin{proof}
	By virtue of \eqref{eq:comblemmaeq2} with $I=\{2,3\}$, it suffices to show that
	\begin{equation}
	\nu_3+ \nu_2 > \frac{3}{8} + 8\varpi.
	\end{equation}
	By \eqref{eq:comblemmaeq1} and \eqref{eq:comblemmaeq2} we have
	\begin{equation}
	\nu_1 < \frac{3}{8} + 8\varpi.
	\end{equation}
	By \eqref{eq:7},
	\begin{equation}
	\nu_2 \le \nu_1 < \frac{3}{8} + 8\varpi.
	\end{equation}
	By the first inequality in \eqref{eq:8},
	\begin{equation}
	\nu_2 + \cdots + \nu_k
	= (\nu_k + \cdots + \nu_1) - \nu_1
	> 1- \left(\frac{3}{8} + 8\varpi\right)
	= \frac{5}{8} - 8\varpi.
	\end{equation}
	Hence, by \eqref{eq:8}, there is an $3\le \ell\le k$ such that
	\begin{equation}
	\nu_2 + \cdots + \nu_{\ell-1} 
	< \frac{3}{8} + 8\varpi
	\end{equation}
	and
	\begin{equation}
	\nu_2 + \cdots + \nu_{\ell} 
	> \frac{5}{8} - 8\varpi,
	\end{equation}
	so that
	\begin{equation}
	\nu_{\ell} > \frac{1}{4} - 16\varpi
	\end{equation}
	Thus, by \eqref{eq:7} and \eqref{eq:comblemmaeq2}, we conclude
	\begin{equation}
	\nu_3 + \nu_2 \ge 2\nu_{\ell} > \frac{1}{2} - 32\varpi
	> \frac{3}{8} + 8\varpi.
	\end{equation}
\end{proof}

We have
\begin{equation} \label{eq:13.2}
N_1 \ge N_2 \ge \left(\frac{X}{MN_1}\right)^{1/2}
\ge X^{5/16-4\varpi},
\end{equation}
and
\begin{equation} \label{eq:13.3}
N_3 \ge \frac{X}{MN_1 N_2}
\ge X^{1/4-16\varpi}.
\end{equation}
By Lemma \ref{lemma:combinatorialLemma} we have
\begin{equation} \label{eq:12}
\nu_3 + \nu_2 > \frac{5}{8} - 8\varpi.
\end{equation}
Hence,
\begin{equation} \label{eq:12b}
MN_1 \ll \frac{X}{N_2N_3} \ll X^{3/8+8\varpi}.
\end{equation}

Let $f$ be as in Lemma \ref{lemma:7} with $\eta^*=\eta$, $\epsilon=\varpi$, and with $N_1$ in place of $M$. Note that the function $\beta_1-f$ is supported on $[N_1^-,N_1]\cup [\eta N_1, \eta N_1^+]$ with $N_1^\pm = (1\pm N_1^{-\varpi}) N_1$. Letting
\begin{equation}
\gamma^* = \alpha*\beta_2*\beta_3* f,
\end{equation}
we have
\begin{align}
\sum_{(n,d)=1} (\gamma-\gamma^*)(n)
&\ll \sum_{(mn,d)=1} \alpha*\beta_2*\beta_3(n) \beta_1(m) - \alpha*\beta_2*\beta_3(n) f(m)\\
&= \sum_{(n,d)=1} \alpha*\beta_2*\beta_3(n)
\sum_{(m,d)=1} (\beta_1(m) - f(m) )\\
&\ll \frac{\varphi(d)}{d} MN_2N_3M^\epsilon
(N_1^{1-\varpi} + \eta N_1^{1-\varpi})\\
&\ll \frac{\varphi(d)}{d} X^{1-\varpi/2}
\end{align}
and
\begin{align}
\sum_{n\equiv a(d)} (\gamma-\gamma^*)(n)
&\ll \sum_{\substack{N_1^- \le q \le N_1\\ (q,d)=1}}
\sum_{\substack{1\le \ell < 3x/q\\ \ell q \equiv a (d)}} \tau_k(\ell)
+ \sum_{\substack{\eta N_1 \le q \le \eta N_1^+\\ (q,d)=1}}
\sum_{\substack{1\le \ell < 3x/q\\ \ell q \equiv a (d)}} \tau_k(\ell)\\
&\ll \frac{\varphi(d)}{d^2} MN_2N_3 N_1^{1-\varpi} \mathcal{L}^B
\ll \frac{X^{1-\varpi/2}}{d}.
\end{align}
It therefore suffices to prove \eqref{eq:6.2} with $\gamma$ replaced by $\gamma^*$. We shall prove the  bound
\begin{equation} \label{eq:13.4}
\Delta(\gamma^*;d,c)
\ll \frac{X^{1-\varpi/3}}{d}.
\end{equation}

We remove the condition $(n,d)=1$ by means of M\"obius inversion formula
\begin{equation} \label{eq:mobiusInversion}
\sum_{\delta|(n,d)} \mu(\delta)
= \begin{cases}
1, & \text{if } (n,d)=1,\\
0, & \text{otherwise}.                         
\end{cases}
\end{equation}
By \eqref{eq:mobiusInversion} we have
\begin{equation}
\sum_{(n,d)=1} f(n)
= \sum_{n} f(n)
\sum_{\substack{\delta|n\\ \delta| d}} \mu(\delta)
= \sum_{\substack{\delta|d}} \mu(\delta)
\sum_{n \equiv0 (\delta)} f(n).
\end{equation}
For $\delta> N^{1-2\epsilon}$, the inner sum on the right side is
\begin{equation}
\sum_{n\equiv 0 (\delta)} f(n)
\ll \frac{N}{N^{1-2\epsilon}}
\ll N^{2\epsilon}.
\end{equation}
This yields
\begin{equation} 
\sum_{(n,d)=1} f(n)
= \sum_{\substack{\delta|d\\ \delta \le N^{1-2\epsilon}}} \mu(\delta)
\sum_{n \equiv0 (\delta)} f(n) + O(N^\epsilon)
= \sum_{\substack{\delta|d\\ \delta \le N^{1-2\epsilon}}} \frac{\mu(\delta)}{\delta} \hat{f}(0) +O(N^\epsilon).
\end{equation}
Since
\begin{equation}
\sum_{\delta|d} \frac{\mu(\delta)}{\delta}
= \frac{\varphi(d)}{d},
\end{equation}
the above becomes
\begin{equation} \label{eq:sharpsiegelwalfiszbound5}
\sum_{(n,d)=1} f(n)
= \frac{\varphi(d)}{d} \hat{f}(0) +O(N^\epsilon).
\end{equation}
By \eqref{eq:13.2}, this yields
\begin{equation}
\frac{1}{\varphi(d)}
\sum_{(n,d)=1} \gamma^*(n)
= \frac{\hat{f}(0)}{d}
\sum_{(m,d)=1}
\sum_{\substack{n_3 \simeq N_3\\ (n_3,d)=1}}
\sum_{\substack{n_2 \simeq N_2\\ (n_2,d)=1}}
\alpha(m)
+ O(d^{-1} X^{3/4}).
\end{equation}
Here $n\simeq N$ stands for $N\le n\le \eta N$. On the other hand, we have
\begin{equation}
\sum_{n\equiv a (d)} \gamma^*(n)
= \sum_{(m,d)=1}
\sum_{\substack{n_3 \simeq N_3\\ (n_3,d)=1}}
\sum_{\substack{n_2 \simeq N_2\\ (n_2,d)=1}}
\alpha(m)
\sum_{mn_3n_2n_1 \equiv a(d)} f(n_1).
\end{equation}
The innermost sum is, by Lemma \ref{lemma:7}, equal to
\begin{equation}
\frac{1}{d} \sum_{|h|< H^*} \hat{f}(h/d)
e_d(-ah\overline{mn_3n_2}) 
+ O(d^{-1})
\end{equation}
where
\begin{equation}
H^* = dN_1^{-1+2\epsilon}.
\end{equation}
It follows that the left side of \eqref{eq:13.4} is
\begin{equation}
= \frac{1}{d}
\sum_{\substack{m\simeq M\\ (m,d)=1}}
\sum_{\substack{n_3\simeq N_3\\ (n_3,d)=1}}
\sum_{\substack{n_2\simeq N_2\\ (n_2,d)=1}}
\alpha(m)
\sum_{1\le |h| < H^*} \hat{f}(h/d) e_d(-ah\overline{mn_3n_2})
+ O(d^{-1} X^{3/4}).
\end{equation}
Hence the proof of \eqref{eq:13.4} is reduced to showing that
\begin{equation} \label{eq:13.5}
\sum_{1\le h < H^*}
\sum_{\substack{n_3\simeq N_3\\ (n_3,d)=1}}
\sum_{\substack{n_2\simeq N_2\\ (n_2,d)=1}}
\hat{f}(h/d) e_d(ah\overline{n_3n_2})
\ll X^{1-\varpi/2+2\epsilon} M^{-1}
\end{equation}
for any $a$ with $(a,d)=1$. Substituting $d_1=d/(h,d)$ and applying M\"obius inversion, the left side of \eqref{eq:13.5} can be written as
\begin{align}
\sum_{d_1|d}
&\sum_{\substack{1\le h < H\\ (h,d_1)=1}}
\sum_{\substack{n_3\simeq N_3\\ (n_3,d)=1}}
\sum_{\substack{n_2\simeq N_2\\ (n_2,d)=1}}
\hat{f}(h/d_1) e_{d_1}(ah \overline{n_3n_2})\\
= &\sum_{d_1 d_2=d}\ \sum_{b_3|d_2}\ \sum_{b_2|d_2}
\mu(b_3) \mu(b_2)
\sum_{\substack{1\le h< H\\ (h,d_1)=1}}\
\sum_{\substack{n_3 \simeq N_3/b_3\\ (n_3,d_1)=1}}\
\sum_{\substack{n_2 \simeq N_2/b_2\\ (n_2,d_1)=1}}
\hat{f}(h/d_1)
e_{d_1} (ah \overline{b_3b_2 n_3n_2}),
\end{align}
where
\begin{equation} \label{eq:13.6}
H = d_1 N_1^{-1+2\epsilon}.
\end{equation}
It therefore suffices to show that
\begin{equation} \label{eq:13.7}
\sum_{\substack{1\le h<H\\ (h,d_1)=1}}\
\sum_{\substack{n_3 \simeq N_3'\\ (n_3,d_1)=1}}\
\sum_{\substack{n_2 \simeq N_2'\\ (n_2,d_1)=1}}\
\hat{f}(h/d_1)
e_{d_1}(bh\overline{n_3n_2})
\ll X^{1-\varpi/2+\epsilon} M^{-1}
\end{equation}
for any $d_1,b,N_3'$, and $N_2'$ satisfying
\begin{equation} \label{eq:13.8}
d_1|d,\quad
(b,d_1)=1, \quad
\frac{d_1 N_3}{d} \le N_3'\le N_3,\quad
\frac{d_1N_2}{d} \le N_2'\le N_2,
\end{equation}
which are henceforth assumed. By the lower bound \eqref{eq:13.2} we have
\begin{equation} \label{eq:13.9}
H \ll X^{3/16+6\varpi + \epsilon}.
\end{equation}
By \eqref{eq:13.6}, the left side of \eqref{eq:13.7} is void if $d_1\le N_1^{1-2\epsilon}$, so we may assume that $d_1>N_1^{1-2\epsilon}$. By the trivial bound
\begin{equation} \label{eq:13.10}
\hat{f}(z) \ll N_1
\end{equation}
and the bound \eqref{eq:3.13}, we find that the left side of \eqref{eq:13.7} is
\begin{equation}
\ll HN_3N_1(d_1^{1/2+\epsilon} + d_1^{-1} N_2)
\ll d_1^{3/2+\epsilon} N_1^{2\epsilon} N_3.
\end{equation}
If $d_1\le X^{5/12-6\varpi}$, then the right side of the above is $\ll X^{1-\varpi+3\epsilon}$ by \eqref{eq:12b} and \eqref{eq:2.13}, and this leads to \eqref{eq:13.7}. Thus we may further assume that
\begin{equation} \label{eq:13.11}
d_1 > X^{5/12-6\varpi}.
\end{equation}

We appeal to the Weyl shift and the factorization $d_2=rq$. By Lemma \ref{lemma:4}, with $d_1$ in place of $d$, we can choose a factor $r$ of $d_1$ such that
\begin{equation} \label{eq:13.12}
X^{44\varpi} < r < X^{45\varpi}.
\end{equation}
Write
\begin{equation}
\mathcal{N}(d_1,k)
= \sum_{\substack{1\le h< H\\ (h,d_1)=1}}\
\sum_{\substack{n_3\simeq N_3'\\ (n_3,d_1)=1}}\
\sum_{\substack{n_2\simeq N_2'\\ (n_2+hkr,d_1)=1}}
\hat{f}(h/d_1)
e_{d_1}(bh\overline{(n_2+hkr)n_3}),
\end{equation}
so that $\mathcal{N}(d_1,0)$ corresponds to the left side of \eqref{eq:13.7}. Assume $k>0$. We have
\begin{equation} \label{eq:13.13}
\mathcal{N}(d_1,k) - \mathcal{N}(d_1,0)
= \mathcal{Q}_1(d_1,k) - \mathcal{Q}_2(d_1,k),
\end{equation}
where
\begin{equation}
\mathcal{Q}_i(d_1,k)
= \sum_{\substack{1\le h < H\\ (h,d_1)=1}}\
\sum_{\substack{n_3 \simeq N_3'\\ (n_3,d_1)=1}}\
\sum_{\substack{\ell \in \mathcal{I}_i(h)\\ (\ell,d_1)=1}}
\hat{f}(h/d_1)
e_{d_1}(bh\overline{\ell n_3}),\quad
i=1,2,
\end{equation}
with
\begin{equation}
\mathcal{I}_1(h) = [\eta N_2', \eta N_2'+hkr),\quad
\mathcal{I}_2(h) = [N_2', N_2'+hkr).
\end{equation}

By M\"obius inversion, we have
\begin{equation}
\mathcal{Q}_i(d_1,k)
= \sum_{st=d_1} \mu(s)
\sum_{1\le h< H/s}\
\sum_{\substack{n_3 \simeq N_3'\\ (n_3,d_1)=1}}\
\sum_{\substack{\ell \in \mathcal{I}_i(sh)\\ (\ell,d_1)=1}}
\hat{f}(h/t) e_t(bh \overline{\ell n_3}).
\end{equation}
The $h$ sum is empty unless $s<H$. Since $H^2 = o(d_1)$ by \eqref{eq:13.9} and \eqref{eq:13.11}, it follows, by changing the order of summation, that
\begin{equation}
|\mathcal{Q}_i(d_1,k)|
\le \sum_{\substack{st=d_1\\ t>H}}\
\sum_{\substack{n_3 \simeq N_3'\\ (n_3,d_1)=1}}\
\sum_{\substack{\ell \in \mathcal{I}_i(H)\\ (\ell,d_1)=1}}
\left|\sum_{h\in J_i(s,\ell)}
\hat{f}(h/t) e_t(bh\overline{\ell n_3}) \right|,
\end{equation}
where $J_i(s,\ell)$ is some interval of length at most $H$ depending on $s$ and $\ell$. By integration by parts, we have
\begin{equation}
\frac{d}{dz} \hat{f}(z)
\ll \min\{N_1^2, |z|^{-2}N_1^\epsilon \},
\end{equation}
and by partial summation and \eqref{eq:13.10} we obtain
\begin{equation}
\sum_{h\in J_i(s,\ell)}
\hat{f}(h/t) e_t(bh\overline{\ell n_3})
\ll N_1^{1+\epsilon}
\min\{H, \|b\overline{\ell n_3}/t \|^{-1} \}.
\end{equation}
Thus,
\begin{equation}
\mathcal{Q}_i(d_1,k)
\ll N_1^{1+\epsilon}
\sum_{\substack{t| d_1\\ t>H}}\
\sum_{\substack{\ell \in \mathcal{I}_i(H)\\ (\ell,d_1)=1}}\
\sum_{\substack{n_3< 2N_3\\ (n_3,d_1)=1}}
\min\{H,\|b\overline{\ell n_3}/t \|^{-1} \}.
\end{equation}
Since $H=o(N_3)$ by \eqref{eq:13.3} and \eqref{eq:13.9}, the inner most sum is $\ll N_3^{1+\epsilon}$, by Lemma \ref{lemma:9}. By \eqref{eq:13.6}, this leads to
\begin{equation} \label{eq:13.14}
\mathcal{Q}_i(d_1,k) \ll d_1^{1+\epsilon} kr N_3.
\end{equation}

We now introduce the parameter
\begin{equation} \label{eq:13.15}
K = [X^{-1/2-48\varpi}N_1N_2],
\end{equation}
which is $\gg X^{1/8-56\varpi}$ by \eqref{eq:13.2}. By the second inequality in \eqref{eq:13.12}, the right side of \eqref{eq:13.14} is $\ll X^{1-\varpi+\epsilon}M^{-1}$ if $k< 2K$. Hence, by \eqref{eq:13.13}, the proof of \eqref{eq:13.7} is reduced to showing that
\begin{equation} \label{eq:13.16}
\frac{1}{K} \sum_{k\sim K} \mathcal{N}(d_1,k)
\ll X^{1-\varpi/2+\epsilon} M^{-1}.
\end{equation}

We now prove \eqref{eq:13.16}. By the relation
\begin{equation}
h \overline{(n_2+hkr)}
\equiv \overline{\ell + kr}\ (\textrm{mod } d_1)
\end{equation}
for $(h,d_1) = (n_2+hkr,d_1)=1$, where $\ell\equiv \overline{h} n_2\ (\textrm{mod } d_1)$, we may rewrite $\mathcal{N}(d_1,k)$ as
\begin{equation}
\mathcal{N}(d_1,k)
= \sum_{\substack{\ell\ (\textrm{mod } d_1)\\ (\ell + kr,d_1)=1}}
\nu(\ell;d_1)
\sum_{\substack{n_3\simeq N_3'\\ (n_3,d_1)=1}}
e_{d_1}(b\overline{(\ell+kr)n_3}),
\end{equation}
where
\begin{equation}
\nu(\ell; d_1) = 
\displaystyle\sideset{}{'}\sum_{\overline{h}n_2 \equiv \ell (d_1)}
\hat{f}(h/d_1).
\end{equation}
Here $\displaystyle\sideset{}{'}\sum$ is the restriction to $1\le h <H$, $(h,d_1)=1$, and $n_2\simeq N_2'$. It follows by Cauchy's inequality that
\begin{equation} \label{eq:14.1}
\left|\sum_{k\sim K} \mathcal{N}(d_1,k) \right|^2
\le P_1P_2,
\end{equation}
where
\begin{equation}
P_1 = \sum_{\ell\ (\textrm{mod } d_1)}
|\nu(\ell; d_1)|^2
\quad \text{and} \quad
P_2 = \sum_{\ell\ (\textrm{mod } d_1)}
\left|\sum_{\substack{k\sim K\\ (\ell+kr,d_1)=1}}
\sum_{\substack{n\simeq N_3'\\ (n,d_1)=1}}
e_{d_1}(b\overline{(\ell+kr)n})
\right|^2.
\end{equation}
By \eqref{eq:13.10} we have
\begin{equation}
P_1 \ll N_1^2 N_4,
\end{equation}
where
\begin{align}
	N_4 &= \#\{(h_1,h_2; n_1,n_2): h_2n_1 \equiv h_1n_2\ (\textrm{mod } d_1), 1\le h_i <H, n_i\simeq N_2' \}\\
	&\ll \sum_{\ell\ (\textrm{mod } d_1)}
\left(\sum_{\substack{1\le m< 2HN_2\\ m\equiv \ell (d_1)}} \tau(m) \right)^2.
\end{align}
Since $HN_2 \ll d_1^{1+\epsilon}$ by \eqref{eq:13.6}, we get
\begin{equation} \label{eq:14.2}
P_1 \ll d_1^{1+\epsilon} N_1^2.
\end{equation}
We claim that
\begin{equation} \label{eq:14.3}
P_2 \ll d_1 X^{3/16+52\varpi +\epsilon} K^2.
\end{equation}
With this, the estimate \eqref{eq:13.16} follows from \eqref{eq:14.1}, \eqref{eq:14.2}, and \eqref{eq:14.3} immediately since
\begin{equation}
N_1 \le X^{3/8 + 8\varpi} M^{-1},\quad
d_1 < X^{1/2+2\varpi},
\end{equation}
and
\begin{equation}\label{eq:conditionvarpi}
	\frac{31}{32} + 36\varpi = 1-\frac{\varpi}{2}.
\end{equation}

It remains to prove \eqref{eq:14.3}. Write $d_1=rq$. Note that
\begin{equation} \label{eq:14.4}
\frac{N_3'}{r} \gg X^{1/6-69\varpi}
\end{equation}
by \eqref{eq:13.8}, \eqref{eq:13.11}, \eqref{eq:13.3}, and the second inequality in \eqref{eq:13.12}. Since
\begin{equation}
\sum_{\substack{n\simeq N_3'\\ (n,d_1)=1}}
e_{d_1}(b\overline{(\ell+kr)n})
= \sum_{\substack{0\le s<r\\ (s,r)=1}}\
\sum_{\substack{n\simeq N_3'/r\\ (nr+s,q)=1}}
e_{d_1}(b\overline{(\ell + kr)(nr+s)})
+ O(r),
\end{equation}
we get
\begin{equation}
\sum_{\substack{k\sim K\\ (\ell +kr, d_1)=1}}\
\sum_{\substack{n\simeq N_3'\\(n,d_1)=1}}
e_{d_1}(b\overline{(\ell+kr)n})
= U(\ell) + O(Kr),
\end{equation}
where
\begin{equation}
U(\ell) = 
\sum_{\substack{0\le s<r\\ (s,r)=1}}\
\sum_{\substack{k\sim K\\ (\ell+kr,d_1)=1}}\
\sum_{\substack{n\simeq N_3'/r\\ (nr+s,q)=1}}
e_{d_1}(b\overline{(\ell+kr)(rn+s)}).
\end{equation}
Hence,
\begin{equation} \label{eq:14.5}
P_2 \ll \sum_{\ell\ (\textrm{mod } d_1)}
|U(\ell)|^2 + d_1(Kr)^2.
\end{equation}
By the second inequality in \eqref{eq:13.12}, the second term on the right side of the above is admissible for \eqref{eq:14.3}. On the other hand, we have
\begin{equation} \label{eq:14.6}
\sum_{\ell\ (\textrm{mod } d_1)}
|U(\ell)|^2
= \sum_{k_1\sim K}
\sum_{k_2 \sim K}
\sum_{\substack{0\le s_1<r\\ (s_1,r)=1}}
\sum_{\substack{0\le s_2<r\\ (s_2,r)=1}}
V(k_2-k_1; s_1,s_2),
\end{equation}
where
\begin{equation} \label{eq:153}
V(k;s_1,s_2) = 
\sum_{\substack{n_1\simeq N_3'/r\\ (n_1r+s_1,q)=1}}
\sum_{\substack{n_2\simeq N_3'/r\\ (n_2r+s_2,q)=1}}
\displaystyle\sideset{}{'}\sum_{\ell\ (\textrm{mod } d_1)}
e_{d_1}(b\overline{\ell (n_1r+s_1)}- b\overline{(\ell +kr)(n_2r+s_2)}).
\end{equation}
Here $\displaystyle\sideset{}{'}\sum$ is the restriction to $(\ell,d_1)=(\ell+kr,d_1)=1$. Note that if $\ell \equiv \ell_1 r + \ell_2 q\ (\textrm{mod } d_1)$, then the condition $(\ell(\ell+kr),d_1)=1$ is equivalent to $(\ell_1(\ell_1+k),q) = (\ell_2,r)=1$. In this case, by the relation
\begin{equation}
\frac{1}{d_1} \equiv \frac{\overline{r}}{q} + \frac{\overline{q}}{r}\ (\textrm{mod }1),
\end{equation}
we have
\begin{align}
&\frac{\overline{\ell(n_1r+s_1)}- \overline{(\ell+kr)(n_2r+s_2)}}{d_1}\\
&\equiv \frac{\overline{r^2\ell_1(n_1r+s_1)} - \overline{r^2(\ell_1+k)(n_2r+s_2)}}{q}
+ \frac{\overline{q^2s_1s_2\ell_2}(s_2-s_1)}{r}\
(\textrm{mod }1).
\end{align}
Thus, by the Chinese remainder theorem, the innermost sum in \eqref{eq:153} is equal to
\begin{equation}
C_r(s_2-s_1)
\sum_{\substack{\ell\ (\textrm{mod } q)\\ (\ell(\ell+k),q)=1}}
e_q(b\overline{r^2\ell(n_1r+s_1)} - b\overline{r^2(\ell+k)(n_2r+s_2)}),
\end{equation}
and, thus,
\begin{equation} \label{eq:14.7}
V(k;s_1,s_2)
= W(k;s_1,s_2) C_r(s_2-s_1),
\end{equation}
where
\begin{align}
W(k,s_1,s_2)
= \sum_{\substack{n_1\simeq N_3'/r\\ (n_1r+s_1,q)=1}}
\sum_{\substack{n_2\simeq N_3'/r\\ (n_2r+s_2,q)=1}}
\displaystyle\sideset{}{'}\sum_{\ell\ (\textrm{mod } q)}
e_q(b\overline{r^2\ell(n_1r+s_1)} - b\overline{r^2(\ell+k)(n_2r+s_2)}).
\end{align}
Here $\displaystyle\sideset{}{'}\sum$ is the restriction to $(\ell(\ell+k),q)=1$.

We first estimate the contribution from terms with $k_1=k_2$ on the right side of \eqref{eq:14.6} as follows. For $(n_1r+s_1,q)=(n_2r+s_2,q)=1$, we have
\begin{equation}
\displaystyle\sideset{}{^*}\sum_{\ell\ (\textrm{mod } q)}
e_q((b\overline{r^2\ell(n_1r+s_1)} - b\overline{r^2\ell(n_2r+s_2)} )
= C_q((n_1-n_2)r + s_1-s_2).
\end{equation}
Since $N_3'\ll X^{1/3}$, by \eqref{eq:13.11} and the second inequality in \eqref{eq:13.12}, we have
\begin{equation} \label{eq:14.8}
\frac{N_3'}{d_1} \ll X^{-1/12+6\varpi}
\ll r^{-1}.
\end{equation}
This implies $N_3'/r = o(q)$, giving
\begin{equation}
\sum_{n\simeq N_3'/r}
|C_q(nr+m)|
\ll q^{1+\epsilon}
\end{equation}
for any $m$. Thus
\begin{equation}
W(0;s_1,s_2) \ll q^{1+\epsilon} r^{-1} N_3'.
\end{equation}
Substituting the above into \eqref{eq:14.7} and using the simple estimate
\begin{equation}
\sum_{0\le s_1<r} \sum_{0\le s_2<r}
|C_r(s_2-s_1)|
\ll r^{2+\epsilon},
\end{equation}
we get that
\begin{equation}
\sum_{\substack{0\le s_1<r\\ (s_1,r)=1}}\
\sum_{\substack{0\le s_2<r\\ (s_2,r)=1}}
V(0;s_1,s_2)
\ll d_1^{1+\epsilon} N_3.
\end{equation}
Hence the contribution from the terms with $k_1=k_2$ on the right side of \eqref{eq:14.6} is $\ll d_1^{1+\epsilon}KN_3$; this reduces to showing that
\begin{equation} \label{eq:14.9}
\sum_{k_1\sim K}\
\sum_{\substack{k_2\sim K\\ k_2\neq k_1}}\
\sum_{\substack{0\le s_1 < r\\ (s_1,r)=1}}\
\sum_{\substack{0\le s_2 < r\\ (s_2,r)=1}}\
V(k_2-k_1;s_1,s_2)
\ll d_1 X^{3/16+52\varpi +\epsilon} K^2.
\end{equation}

By \eqref{eq:14.4} and \eqref{eq:14.8}, letting
\begin{equation}
n' = \min\{n: n\simeq N_3'/r \},\quad
n'' = \max\{n: n\simeq N_3'/r \},
\end{equation}
we may rewrite $W(k;s_1,s_2)$ as
\begin{align}
\sum_{\substack{n_1\le q\\ (n_1r+s_1,q)=1}}\
\sum_{\substack{n_2\le q\\ (n_2r+s_2,q)=1}}\
\displaystyle\sideset{}{'}\sum_{\ell\ (\textrm{mod } q)}
F\left(\frac{n_1}{q}\right)
F\left(\frac{n_2}{q}\right)
e_q\left(b\overline{r^2\ell(n_1r+s_1)} - b\overline{r^2(\ell+k)(n_2r+s_2)} \right),
\end{align}
where $F(y)$ is a function of $C^2[0,1]$ class such that
\begin{equation}
0\le F(y) \le 1,
\end{equation}
\begin{equation}
F(y) = 
\begin{cases}
1,&  \text{if}\quad
y \in \left[
\frac{n'}{q}, \frac{n''}{q} \right],\\
0,& \text{if} \quad
y\notin \left[\frac{n'}{q}-\frac{1}{2q}, \frac{n''}{q} + \frac{1}{2q} \right],
\end{cases}
\end{equation}
for which the Fourier coefficient
\begin{equation}
\kappa(m) = \int_0^1 F(y) e(-my) dy
\end{equation}
satisfies
\begin{equation} \label{eq:14.10}
\kappa(m) \ll \kappa^*(m)
:= \min\left\{\frac{1}{r},\frac{1}{|m|}, \frac{q}{m^2} \right\}.
\end{equation}
Here we have used \eqref{eq:14.8}. Fourier expand $F(y)$ we obtain
\begin{equation} \label{eq:14.11}
W(k;s_1,s_2)
= \sum_{m_1=-\infty}^\infty\ \sum_{m_2=-\infty}^\infty
\kappa(m_1) \kappa(m_2)
Y(k;m_1,m_2;s_1,s_2),
\end{equation}
where
\begin{equation}
Y(k;m_1,m_2;s_1,s_2)
= \sum_{\substack{n_1\le q\\ (n_1r+s_1,q)=1}}\
\sum_{\substack{n_2\le q\\ (n_2r+s_2,q)=1}}\
\displaystyle\sideset{}{'}\sum_{\ell\ (\textrm{mod } q)}
e_q(\delta(\ell,k;m_1,m_2;n_1,n_2;s_1,s_2)),
\end{equation}
with
\begin{align}
\delta(\ell,k;m_1,m_2;n_1,n_2;s_1,s_2)
=\ b\overline{r^2\ell(n_1r+s_1)} - b\overline{r^2(\ell+k)(n_2r+s_2)}
+ m_1n_1 + m_2n_2.
\end{align}
Moreover, if $n_jr + s_j \equiv t_j\ (\textrm{mod }q)$, then $n_j \equiv \overline{r} (t_j-s_j)\ (\textrm{mod } q)$ so that
\begin{equation}
m_1n_1 + m_2n_2
\equiv \overline{r} (m_1t_1 + m_2t_2)
- \overline{r}(m_1s_1+ m_2s_2)\ 
(\textrm{mod } q).
\end{equation}
Hence, on substituting $n_jr + s_j = t_j$, we may write $Y(k;m_1,m_2;s_1,s_2)$ as
\begin{equation} \label{eq:14.12}
Y(k;m_1,m_2;s_1,s_2) = 
Z(k;m_1,m_2) e_q\left(-\overline{r}(m_1s_1+m_2s_2) \right),
\end{equation}
where
\begin{equation}
Z(k;m_1,m_2)
= \displaystyle\sideset{}{^*}\sum_{t_1\ (\textrm{mod } q)}\
\displaystyle\sideset{}{^*}\sum_{t_2\ (\textrm{mod } q)}\
\displaystyle\sideset{}{'}\sum_{\ell\ (\textrm{mod } q)}
e_q(b\overline{r^2\ell t_1} - b\overline{(r^2(\ell+k)t_2)} + \overline{r}(m_1t_1 + m_2t_2) ).
\end{equation}
By \eqref{eq:14.7}, \eqref{eq:14.11}, and \eqref{eq:14.12} we get
\begin{align} \label{eq:14.13}
\sum_{\substack{0\le s_1<r\\ (s_1,r)=1}}\
\sum_{\substack{0\le s_2<r\\ (s_2,r)=1}}
V(k;s_1,s_2)
=
\sum_{m_1=-\infty}^\infty\ \sum_{m_2=-\infty}^\infty
\kappa(m_1) \kappa(m_2)
Z(k;m_1,m_2) J(m_1,m_2),
\end{align}
where
\begin{equation}
J(m_1,m_2)
= \sum_{\substack{0\le s_1<r\\ (s_1,r)=1}}\
\sum_{\substack{0\le s_2<r\\ (s_2,r)=1}}
e_q(-\overline{r} (m_1s_1+m_2s_2) )
C_r(s_2-s_1).
\end{equation}
We now appeal to Lemma \ref{lemma:12}.

By simple substitution we have
\begin{equation}
Z(k;m_1,m_2) = T(k,bm_1\overline{r}^3, -bm_2\overline{r}^3; q),
\end{equation}
so Lemma \ref{lemma:12} gives
\begin{equation} \label{eq:1214}
Z(k;m_1,m_2) \ll (k,q)^{1/2} q^{3/2+\epsilon},
\end{equation}
the right side does not depend on $m_1$ and $m_2$. We claim the following estimate
\begin{equation} \label{eq:14.14}
\sum_{m_1=-\infty}^\infty\ \sum_{m_2=-\infty}^\infty
\kappa^*(m_1) \kappa^*(m_2)
|J(m_1,m_2)|
\ll r^{1+\epsilon}.
\end{equation}
Combining the above two estimates together with \eqref{eq:14.13} we obtain
\begin{equation}
\sum_{\substack{0\le s_1<r\\ (s_1,r)=1}}\
\sum_{\substack{0\le s_2<r\\ (s_2,r)=1}}
V(k;s_1,s_2)
\ll (k,q)^{1/2}
q^{3/2+\epsilon}
r^{1+\epsilon}.
\end{equation}
This leads to \eqref{eq:14.9}, since
\begin{equation}
q^{1/2}
= (d_1/r)^{1/2}
< X^{1/4-21\varpi}
= X^{3/16+52\varpi}
\end{equation}
by the first inequality in \eqref{eq:13.12}
and
\begin{equation}
\sum_{k_1\sim K}\
\sum_{\substack{k_2\sim K\\ k_2\neq k_1}}
(k_1-k_1,q)^{1/2}
\ll q^\epsilon K^2,
\end{equation}
whence \eqref{eq:14.3} follows.

We now prove \eqref{eq:14.14}. We rewrite the left side of \eqref{eq:14.14} as
\begin{equation}
\frac{1}{r} 
\sum_{m_1=-\infty}^\infty\ \sum_{m_2=-\infty}^\infty\
\sum_{0\le k< r}
\kappa^*(m_1) \kappa^*(m_2+k)
|J(m_1,m_2+k)|.
\end{equation}
By \eqref{eq:14.10}, we have
\begin{equation}
\sum_{m=-\infty}^\infty
\kappa^*(m) \ll \mathcal{L},
\end{equation}
and $\kappa^*(m+k) \ll \kappa^* (m)$ for $0\le k<r$, since $r<q$ by \eqref{eq:13.11} and the second inequality in \eqref{eq:13.12}. Thus, to prove \eqref{eq:14.14}, it suffices to show that
\begin{equation} \label{eq:14.15}
\sum_{0\le k<r}
|J(m_1,m_2+k)|
\ll r^{2+\epsilon}
\end{equation}
for any $m_1$ and $m_2$. Substituting $s_2-s_1=t$ and applying M\"obius inversion we obtain
\begin{align} \label{eq:14.16}
J(m_1,m_2)
&= \sum_{|t|<r} C_r(t)
\sum_{\substack{s\in I_t\\ (s(s+t),r)=1}}
e_q\left(-\overline{r}(m_2t+(m_1+m_2)s) \right)\\
&\ll \sum_{|t|<r} |C_r(t)|
\left|\sum_{\substack{s\in I_t\\ s(s+t)\equiv 0 (r_1)}}
e_q(\overline{r}(m_1+m_2)s) \right|,
\end{align}
where $I_t$ is some interval of length less than $r$ depending on $t$. For any fixed $t$ and any square-free $r_1$, there are exactly $\tau(r_1/(t,r_1))$ distinct residue classes modulo $r_1$ such that
\begin{equation}
s(s+t) \equiv 0\quad (\textrm{mod } r_1)
\end{equation}
if and only if $s$ belongs to one of these classes. On the other hand, if $r=r_1r_2$, then
\begin{equation}
\sum_{\substack{s\in I_t\\ s\equiv a(r_1)}}
e_q\left(\overline{r} (m_1+m_2)s \right)
\ll \min\{r_2, \|\overline{r}_2(m_1+m_2)/q \|^{-1} \}
\end{equation}
for any $a$. Hence the inner sum on the right side of \eqref{eq:14.16} is
\begin{equation}
\ll \tau(r)
\sum_{r_2|r}
\min\{r_2, \|\overline{r}_2(m_1+m_2)/q \|^{-1} \},
\end{equation}
which does not depend on $t$. This, together with the simple estimate
\begin{equation}
\sum_{|t|<r} |C_r(t)| \ll \tau(r) r,
\end{equation}
yields
\begin{equation}
J(m_1,m_2) \ll \tau(r)^2 r
\sum_{r_2|r} 
\min\{r_2, \|\overline{r}_2(m_1+m_2)/q \|^{-1} \}.
\end{equation}
Thus the left side of \eqref{eq:14.15} is
\begin{equation} \label{eq:14.17}
\ll \tau(r)^2 r
\sum_{r_1r_2=r}\
\sum_{0\le k_1<r_1}\
\sum_{0\le k_2<r_2}
\min\{r_2, \|\overline{r}_2(m_1 + m_2+ k_1 r_2 + k_2)/q \|^{-1} \}.
\end{equation}
Assume $r_2|r$. By the relation
\begin{equation}
\frac{\overline{r_2}}{q}
\equiv -\frac{\overline{q}}{r_2} + \frac{1}{qr_2}\quad (\textrm{mod } 1),
\end{equation}
we have
\begin{equation} \label{eq:14.18}
\sum_{0\le k< r_2}
\min\{r_2, \|\overline{r_2}(m+k)/q \|^{-1} \}
\ll r_2 \mathcal{L}
\end{equation}
for any $m$. Hence the estimate \eqref{eq:14.15} follows at once from \eqref{eq:14.17} and \eqref{eq:14.18}.

\subsection{Proof of Case (c)}

We finish the proof of Theorem \ref{thm:maintheoremtauk} with the last and most involved case. This case corresponds to the Type I and II estimates in \cite[\S \S 7-12]{Zhang2014}. Without loss of generality, assume there is a subset $I$ of $\{1,2,\dots, k \}$ such that
\begin{equation} \label{eq:18}
	\frac{3}{8} + 8\varpi 
	< \sum_{i\in I} \nu_i
	< \frac{1}{2} +\frac{\log 2}{2\mathcal{L}}.
\end{equation}
Let $J$ be the complement of $I$ in $\{1,2,\dots, k \}$. Write
\begin{equation}
	\alpha = \beta_{j_1}* \beta_{j_2}* \dots * \beta_{j_m}, \quad
	J= \{j_1,j_2,\dots, j_m \},
\end{equation}
and
\begin{equation} \label{eq:beta}
	\beta = \beta_{i_1}*\beta_{i_2}*\dots *\beta_{i_\ell}, \quad
	I = \{i_1,i_2,\dots, i_\ell \},
\end{equation}
so that $\gamma=\alpha*\beta$. We have $\alpha$ supported on $[M,2M)$ and $\beta$ supported on $[N,2N)$, where
\begin{equation} \label{eq:betaN}
	M=\prod_{j\in J}N_j,\quad
	N=\prod_{i\in I} N_i.
\end{equation}
By \eqref{eq:18}, we have
\begin{equation} \label{eq:betaNbound}
	X^{3/8+8\varpi} < N \ll X^{1/2}.
\end{equation}

We now treat \eqref{eq:typeIandIISum} via the methods in \cite{FriedlanderIwaniec1985} and \cite[\S\S3-7]{BFI1986}, following \cite{Zhang2014}. Write
\begin{equation} \label{eq:2.13}
	X_1 = X^{3/8+8\varpi}
	\quad\text{and}\quad
	X_2 = X^{1/2-4\varpi}.
\end{equation}
We apply Lemma \ref{lemma:4} with
\begin{equation} \label{eq:7.1}
	R^* = 
	\begin{cases}
	X^{-\varpi/6} N, & \text{ if } X_1 < N \le X_2,\\
	X^{-3\varpi} N,& \text{ if } X_2 < N \le 2X^{1/2}.
	\end{cases}	
\end{equation}
Hence, by Lemma \ref{lemma:4}, the proof of Case (c) is reduced to showing that
\begin{equation} \label{eq:typeIandIISum}
	\sum_{\substack{q\sim Q}}
	\sum_{\substack{r \sim R\\ (r,a)=1\\ (q,r\mathcal{P}_0)=1}}
	\mu(qr)^2
	|\Delta(\gamma;X, qr,a)|
	\ll
	X^{1-\varpi^{5/3}}
\end{equation}
subject to the conditions
\begin{equation}
	X^{-\varpi}R^* < R < R^* 
	\quad \text{and} \quad
	\frac{1}{2} D_2 < QR < X^{1/2+2\varpi}.
\end{equation}

Therefore, it suffices to prove that
\begin{equation} \label{eq:BSum}
	\mathcal{B}(\gamma;Q,R)
	:= \sum_{\substack{r\sim R\\ (r,a)=1}} |\mu(r)|
	\sum_{\substack{q\sim Q\\ q|\mathcal{P}\\ (q,r\mathcal{P}_0)=1}}
	|\Delta(\gamma;X, qr,a)|
	\ll X^{1- \varpi^{5/3}}
\end{equation}
subject to the constraints
\begin{equation} \label{eq:7.4}
	X^{-\varpi} R^* < R < R^*
\end{equation}
and
\begin{equation} \label{eq:7.5}
	D_2 \ll QR \ll X^{1/2+2\varpi},
\end{equation}
which are henceforth assumed.

In what follows we assume that
\begin{equation} \label{eq:rConstraints}
	r\sim R,\quad
	|\mu(r)|=1,
	\quad\text{and}\quad
	(r,a)=1.
\end{equation}
Let $c(q,r)$ denote
\begin{equation}
	c(q,r) = 
	\begin{cases}
	\mathrm{sign}\Delta(\gamma; X, qr, a), & \text{if } q\sim Q, q| \mathcal{P}, \text{and } (q, r \mathcal{P}_0)=1,\\
	0,& \text{otherwise}.
	\end{cases}
\end{equation}
Splitting $\gamma=\alpha*\beta$, writing $n$ as $mn$, and changing the order of summation, the inner sum over $q$ in \eqref{eq:BSum} becomes
\begin{equation} \label{eq:qSum}
	\sum_{\substack{q\sim Q\\ q|\mathcal{P}\\ (q,r\mathcal{P}_0)=1}}
	|\Delta(\gamma;X, q,a)|
	= \sum_{(m,r)=1} \alpha(m) \mathcal{D}(r,m),
\end{equation}
where
\begin{equation}
	\mathcal{D}(r,m)
	:= \sum_{(q,m)=1} c(q,r)
	\left(\sum_{mn\equiv a (qr)} \beta(n)
	-  \frac{1}{\varphi(qr)} \sum_{(n,qr)=1}\beta(n) \right).
\end{equation}
Substituting \eqref{eq:qSum} into \eqref{eq:BSum} and applying Cauchy's inequality to the $m$ variable, we get
\begin{equation} \label{eq:7.7}
	\mathcal{B}(\gamma;Q,R)^2
	\ll MR\mathcal{L}^B
	\sum_{r\sim R} |\mu(r)|
	\sum_{(m,r)=1} f(m) \mathcal{D}(r,m)^2,
\end{equation}
where $f(y)$ is as in Lemma \ref{lemma:7}. Squaring out $\mathcal{D}(r,m)$ and summing over $m$, we have
\begin{equation} \label{eq:7.8}
	\sum_{(m,r)=1} f(m) \mathcal{D}(r,m)^2
	= \mathcal{S}_1(r)
	-2 \mathcal{S}_2(r)
	+ \mathcal{S}_3(r),
\end{equation}
where $\mathcal{S}_j(r,a)$, $j=1,2,3$, are defined by
\begin{align}
	\mathcal{S}_1(r) 
	&= \sum_{(m,r)=1} f(m)
	\left(\sum_{(q,m)=1} c(q,r) \sum_{mn\equiv a (qr)} \beta(n) \right)^2,\\
	\mathcal{S}_2(r)
	&= \sum_{q_1} \sum_{q_2}
	\frac{c(q_1,r) c(q_2,r)}{\varphi(q_2r)}
	\sum_{n_1} \sum_{(n_2,q_2r)=1} \beta(n_1)\beta(n_2)
	\sum_{\substack{mn_1\equiv a(q_1r)\\ (m,q_2)=1}} f(m),\\
	\mathcal{S}_3(r)
	&= \sum_{q_1} \sum_{q_2}
	\frac{c(q_1,r) c(q_2,r)}{\varphi(q_1r)\varphi(q_2r)}
	\sum_{(n_1,q_1r)=1} \sum_{(n_2,q_2r)=1} 
	\beta(n_1) \beta(n_2)
	\sum_{(m,q_1q_2r)=1} f(m).
\end{align}
By \eqref{eq:7.7} and \eqref{eq:7.8}, the proof of \eqref{eq:BSum} is reduced to showing that
\begin{equation} \label{eq:7.9}
	\sum_r
	(\mathcal{S}_1(r)
	-2 \mathcal{S}_2(r)
	+ \mathcal{S}_3(r))
	\ll NR^{-1}X^{1-\varpi^{5/3}},
\end{equation}
where $r$ is constrained as in \eqref{eq:rConstraints}.

We begin with the evaluation of $\mathcal{S}_3(r)$ which is the simplest of the three sums. We make frequent use of the trivial bound
\begin{equation} \label{eq:8.1}
	\hat{f}(z) \ll M.
\end{equation}
Similar to the proof of \eqref{eq:sharpsiegelwalfiszbound5}, we have, for $q_j\sim Q$, $j=1,2$,
\begin{equation}
	\sum_{(m,q_1q_2r)=1} f(m)
	= \frac{\varphi(q_1q_2r)}{q_1q_2r} \hat{f}(0)
	+ O(X^\epsilon).
\end{equation}
This yields
\begin{equation}
	\mathcal{S}_3(r)
	=\hat{f}(0)
	\sum_{q_1}\sum_{q_2} 
	\frac{c(q_1,r) c(q_2,r)}{\varphi(q_1r) \varphi(q_2r)}
	\frac{\varphi(q_1q_2r)}{q_1q_2r}
	\sum_{{(n_1,q_1r)=1}}
	\sum_{{(n_2,q_2r)=1}}
	\beta(n_1) \beta(n_2)
	+ O(X^\epsilon N^2R^{-2}).
\end{equation}
If $(q_1q_2,\mathcal{P}_0)=1$, then either $(q_1,q_2)=1$ or $(q_1,q_2)>D_0$. Thus, on the right side of the above, the contribution from terms with $(q_1,q_2)>1$ is, by \eqref{eq:8.1} and trivial estimation,
\begin{equation}
	\ll X N D_0^{-1} R^{-2} \mathcal{L}^B.
\end{equation}
It follows that
\begin{equation} \label{eq:8.3}
	\mathcal{S}_3(r)
	= \hat{f}(0) X(r) 
	+ O(XN D_0^{-1} R^{-2} \mathcal{L}^B),
\end{equation}
where
\begin{equation} \label{eq:X}
	X(r)
	= \sum_{q_1} \sum_{(q_2,q_1)=1}
	\frac{c(q_1,r) c(q_2,r)}{q_1q_2r \varphi(r)}
	\sum_{{(n_1,q_1r)=1}}
	\sum_{{(n_2,q_2r)=1}}
	\beta(n_1) \beta(n_2).
\end{equation}
The value of $X(r)$ is not essential, since it will cancel out, with acceptable error (see \eqref{eq:10.15} below), when we insert it back into the left side of \eqref{eq:7.9}. We next evaluate $\mathcal{S}_2(r)$.

Our next goal is to show that
\begin{equation} \label{eq:9.1}
	\mathcal{S}_2(r)
	= \hat{f}(0) X(r)
	+ O(XN D_0^{-1} R^{-2} \mathcal{L}^B)
\end{equation}
with $X(r)$ given as in \eqref{eq:X}. The main tool we need is the Ramanujan bound \eqref{eq:3.13}. Assume $c(q_1,r) c(q_2,r)\neq 0$. On substituting $mn_1=n$ and applying Lemma \ref{lemma:8} we have
\begin{equation}
	\sum_{n_1} \beta(n_1)
	\sum_{\substack{mn_1\equiv a (q_1r)\\ (m,q_2)=1}} f(m)
	\ll \sum_{\substack{n< 2X\\ n\equiv a(q_1r)}} \tau_k(n)
	\ll \frac{X\mathcal{L}^B}{q_1r}.
\end{equation}
It follows that contributions from terms with $(q_1,q_2)>1$ in $\mathcal{S}_2(r)$ is
\begin{equation}
	\ll XN D_0^{-1} R^{-2} \mathcal{L}^B,
\end{equation}
so that
\begin{equation} \label{eq:9.2}
	\mathcal{S}_2(r)
	= \sum_{q_1} \sum_{(q_2,q_1)=1}
	\frac{c(q_1,r) c(q_2,r)}{\varphi(q_2r)}
	\sum_{n_1} \sum_{(n_2,q_2r)=1} \beta(n_1) \beta(n_2)
	\sum_{\substack{mn_1\equiv a(q_1r)\\ (m,q_2)=1}} f(m)
	+O(XN D_0^{-1} R^{-2} \mathcal{L}^B).
\end{equation}
Note that the innermost sum over $m$ in \eqref{eq:9.2} is empty unless $(n_1,q_1r)=1$. For $|\mu(q_1q_2r)|=1$ and $(q_2,\mathcal{P}_0)=1$ we have
\begin{equation}
	\frac{q_2}{\varphi(q_2)}
	= 1+ O(\tau(q_2) D_0^{-1})
\end{equation}
and, by Lemma \ref{lemma:8},
\begin{equation}
	\sum_{(n_1,q_1r)=1} \beta(n_1)
	\sum_{\substack{mn_1\equiv a (q_1r)\\ (m,q_2)>1}} f(m)
	\ll \sum_{\substack{n< 2X\\ n\equiv a(q_1r)\\ (n,q_2)>1}} \tau_k(n)
	\ll \frac{\tau_k(q_2)X\mathcal{L}^B}{q_1r D_0}.
\end{equation}
Thus \eqref{eq:9.2} still holds with the constraint $(m,q_2)=1$ removed and with $\varphi(q_2r)$ replaced by $q_2\varphi(r)$. That is, we have
\begin{equation} \label{eq:9.3}
	\mathcal{S}_2(r)
	= \sum_{q_1} \sum_{(q_2,q_1)=1}
	\frac{c(q_1,r) c(q_2,r)}{q_2\varphi(r)}
	\sum_{n_1} \sum_{(n_2,q_2r)=1} \beta(n_1)\beta(n_2)
	\sum_{\substack{mn_1\equiv a(q_1r)}} f(m)
	+O(XN D_0^{-1} R^{-2} \mathcal{L}^B).
\end{equation}
By Lemma \ref{lemma:7}, for $(n_1,q_1r)=1$, we have
\begin{equation}
	\sum_{mn_1\equiv a (q_1r)} f(m)
	=\frac{1}{q_1 r}
	\sum_{|h|< H_2} \hat{f}\left(\frac{h}{q_1r} \right)
	e_{q_1r}(-hm) + O(d^{-1}),
\end{equation}
where
\begin{equation}
	H_2 = 4QRM^{-1+2\epsilon}.
\end{equation}
Substituting this into \eqref{eq:9.3} we deduce that
\begin{equation} \label{eq:9.5}
	\mathcal{S}_2(r)
	= \hat{f}(0) X(r)
	+ \mathcal{R}_2(r)
	+ O(XN D_0^{-1} R^{-2} \mathcal{L}^B),
\end{equation}
where
\begin{align}
	\mathcal{R}_2(r)
	= &\sum_{q_1} \sum_{(q_2,q_1)=1}
	\frac{c(q_1,r) c(q_2,r)}{q_1q_2\varphi(r)}
	\left(\sum_{(n_2,q_2r)=1}\beta(n_2)\right)\\
	 &\times \sum_{(n_1,q_1r)=1}\beta(n_1)
	 \sum_{1\le |h|< H_2} \hat{f}\left(\frac{h}{q_1r}\right)
	 e_{q_1r}(-hm).
\end{align}
The proof of \eqref{eq:9.1} is now reduced to estimating $\mathcal{R}_2(r)$. Note that, by the second inequality in \eqref{eq:7.5}, we have
\begin{equation} \label{eq:9.6}
	H_2 \ll X^{-1/2+2\varpi+2\epsilon} N
\end{equation}
since $M^{-1}\ll X^{-1}N$. This implies that $\mathcal{R}_2(r)=0$ if $X_1< N< X_2$, since $H_2<1$ in this case.

Now assume that $X_2 < N < 2X^{1/2}$. 
By the `reciprocity' relation
\begin{equation}
	\frac{m}{q_1r}
	\equiv \frac{a\overline{q_1n_1}}{r}
	+ \frac{a\overline{rn_1}}{q_1}
	\quad (\text{mod } 1),
\end{equation}
we get
\begin{equation} \label{eq:9.7}
	\mathcal{R}_2(r)
	\ll N^{1+\epsilon} R^{-2}
	\sum_{\substack{n\sim N\\ (n,r)=1}}
	|\mathcal{R}^*(r,n)|,
\end{equation}
where
\begin{equation}
	\mathcal{R}^*(r,n)
	= \sum_{(q,n)=1} \frac{c(q,r)}{q}
	\sum_{1\le |h|< H_2}
	\hat{f}\left(\frac{h}{qr}\right)
	e\left(
	\frac{-ah\overline{qn}}{r}
	- \frac{ah\overline{rn}}{q}
	\right).
\end{equation}
Expanding $|\mathcal{R}^*(r,n)|^2$ out, we have
\begin{align}
	|\mathcal{R}^*(r,n)|^2
	&= \sum_{(q,n)=1} \sum_{(q',n)=1}
	\frac{c(q,r) c(q',r) }{qq'}\\
	&\times \sum_{1\le|h|< H_2}\ \sum_{1\le|h'|< H_2}
	\hat{f}\left(\frac{h}{qr} \right)
	\overline{\hat{f}\left(\frac{h'}{q'r} \right)}
	e\left(
	\frac{a(h'\overline{q}'-h\overline{q} \overline{n}}{r} - \frac{ah\overline{rn}}{q} + \frac{ah' \overline{rn}}{q'}
	\right).
\end{align}
Changing the order of summation and applying \eqref{eq:8.1}, we obtain
\begin{align} \label{eq:9.8}
	M^{-2}
	\sum_{\substack{n\sim N\\ (n,r)=1}}
	|\mathcal{R}^*(r,n)|^2
	&\ll
	\sum_q \sum_{q'}
	\frac{|c(q,r) c(q',r) |}{qq'}
	\sum_{1\le |h|< H_2}\ \sum_{1\le |h'|<H_2}
	|\mathcal{W}(q,r;q',h')|,
\end{align}
where
\begin{equation}
	\mathcal{W}(q,r;q',h')
	= \sum_{\substack{n\sim N\\ (n,qq'r)=1}}
	e\left(
	\frac{a(h'\overline{q}'-h\overline{qn})}{r} - \frac{ah\overline{rn}}{q} + \frac{ah' \overline{rn}}{q'}
	\right).
\end{equation}

Since $M^{-1}\ll N^{-1}$, by the second inequality in \eqref{eq:7.4} we have
\begin{equation} \label{eq:9.9}
	H_2Q^{-1} \ll X^{-3\varpi+\epsilon}.
\end{equation}
It follows that, on the right side of \eqref{eq:9.8}, the contribution from terms with $h'q=hq'$ is
\begin{equation} \label{eq:9.10}
	\ll NQ^{-2} 
	\sum_{1\le |h|< H_2} \sum_{q<2Q} \tau(hq)
	\ll X^{-3\varpi + \epsilon}N.
\end{equation}

Now assume that $c(q,r) c(q',r) \neq 0$, $1\le |h|< H_2$, $1\le |h'|< H_2$, and $h'q\neq hq'$. Letting $d=[q,q']r$, we have
\begin{equation}
	\frac{a(h'\overline{q}'-h\overline{q})}{r} - \frac{ah\overline{r}}{q} + \frac{ah' \overline{r}}{q'}
	\equiv \frac{c}{d}\quad
	(\text{mod }1)
\end{equation}
for some $c$ with
\begin{equation}
	(c,r) = (h'\overline{q}' - h\overline{q},r).
\end{equation}
By the estimate \eqref{eq:3.13} it follows that
\begin{equation} \label{eq:9.11}
	\mathcal{W}(q,r;q',h')
	\ll d^{1/2+\epsilon} + \frac{(c,d)N}{d}.
\end{equation}
Since $N>X_2$, by the first inequality in \eqref{eq:7.4}, \eqref{eq:7.1}, and $\eqref{eq:2.13}$, we have
\begin{equation} \label{eq:9.12}
	R^{-1} < X^{4\varpi}N^{-1} < X^{-1/2+8\varpi}.
\end{equation}
This and the second inequality in \eqref{eq:7.5} imply that
\begin{equation} \label{eq:9.13}
	Q \ll X^{10\varpi}.
\end{equation}
Thus we have
\begin{equation}
	d^{1/2} \ll (Q^2R)^{1/2} \ll X^{1/4+6\varpi}.
\end{equation}
Note that 
\begin{equation}
	h'\overline{q}' - h\overline{q} 
	\equiv (h'q - hq') \overline{qq'}
	\quad (\text{mod } r).
\end{equation}
This implies
\begin{equation} \label{eq:9.14}
	(c,d) \le (c,r) [q,q'] \ll [q,q'] H_2Q.
\end{equation}
This together with \eqref{eq:9.6}, \eqref{eq:9.12}, and \eqref{eq:9.13}, give
\begin{equation}
	\frac{(c,d)N}{d}
	\ll H_2NQR^{-1}
	\ll X^{16\varpi+\epsilon}.
\end{equation}
Combining these estimates with \eqref{eq:9.11} we deduce that
\begin{equation}
	\mathcal{W}(q,r;q',h')
	\ll X^{1/4+7\varpi}.
\end{equation}
This together with \eqref{eq:9.6} imply that the contribution from terms with $h'q\neq hq'$ on the right side of \eqref{eq:9.8} is $\ll X^{1/4+12\varpi}$, which is sharper than the right side of \eqref{eq:9.10}. Combining these estimates with \eqref{eq:9.8} we conclude that
\begin{equation}
	\sum_{\substack{n\sim N\\ (n,r)=1}}
	|\mathcal{R}^*(r,n)|
	\ll X^{1-3\varpi/2+\epsilon}.
\end{equation}
Substituting this into \eqref{eq:9.7} we obtain
\begin{equation} \label{eq:9.15}
	\mathcal{R}_2(r) \ll NR^{-2} X^{1-\varpi},
\end{equation}
which is sharper than the big-Oh term in \eqref{eq:9.5}.

The relation \eqref{eq:9.1} follows from the bounds \eqref{eq:9.5} and \eqref{eq:9.15} immediately. It remains to deal with $\mathcal{S}_1(r)$. The evaluation of the last sum $\mathcal{S}_1(r)$ is the most difficult. The main tool we need is Lemma \ref{lemma:11}.

We shall instead establish an averaged bound on $\mathcal{S}_1(r)$ of the form
\begin{equation} \label{eq:10.1}
	\sum_r \mathcal{S}_1(r)
	= \sum_r (\hat{f}(0)X(r) + \mathcal{R}_1(r))
	+ O(XN R^{-1} X^{-\varpi^{5/3}})
\end{equation}
with $\mathcal{R}_1(r)$ to be specified below in \eqref{eq:10.10}. By the estimates \eqref{eq:9.1} for $\mathcal{S}_2(r)$ and \eqref{eq:8.3} for $\mathcal{S}_3(r)$, the proof of \eqref{eq:7.9} will be reduced to estimating $\mathcal{R}_1(r)$.

By definition of $\mathcal{S}_1(r)$, expanding out the square, we have
\begin{equation} \label{eq:10.2}
	\mathcal{S}_1(r)
	= \sum_{q_1} \sum_{q_2} c(q_1,r) c(q_2,r)
	\sum_{n_1} \sum_{n_2\equiv n_1 (r)}
	\beta(n_1) \beta(n_2)
	\sum_{\substack{mn_1\equiv a (q_1r)\\ mn_2\equiv a (q_2)}} f(m).
\end{equation}
Let $\mathcal{U}(r,q_0)$ denote the sum of terms on the right side of the above with $(q_1,q_2)=q_0$. The sum $\mathcal{U}(r,q_0)$ vanishes unless
\begin{equation}
	q_0< 2Q,\quad
	q_0| \mathcal{P},\quad
	\text{and}\quad
	(q_0,r\mathcal{P}_0) =1,
\end{equation}
which we will assume. We first show that the contribution 
\begin{equation} \label{eq:10.3}
	\sum_r \sum_{q_0>1}
	\mathcal{U}(r,q_0)
	\ll XN (D_0 R)^{-1} \mathcal{L}^B
\end{equation}
coming from terms with $q_0>1$ is admissible. Assume that, for $j=1,2$,
\begin{equation}
	q_j\sim Q,\quad
	q_j|\mathcal{P},\quad
	(q_j,r\mathcal{P}_0)=1,\quad
	\text{and}\quad (q_1,q_2) = q_0.
\end{equation}
Writing 
$q_1'=q_1/q_0$, $q_2'=q_2/q_0$, we get
\begin{equation}
	\sum_r \sum_{q_0>1}
	\mathcal{U}(r,q_0) = 
	\sum_r \sum_{q_0>1}
	\mathop{\sum\sum}_{\substack{(q',q'')=1\\ (a, q'q'')=1}}
	c(q_0q',r) c(q_0q'',r)
	\sum_{n_1}
	\sum_{n_2\equiv n_1 (r)}
	\beta(n_1) \beta(n_2)
	\sum_{m\equiv \mu (q_1q_2r)}
	f(m)
\end{equation}
where $\mu (\text{mod } q_1q_2r)$ is a common solution to
\begin{equation} \label{eq:10.8}
	\mu n_1 \equiv a (\text{mod } q_1r)\quad
	\text{and}\quad
	\mu n_2 \equiv a (\text{mod } q_2r).
\end{equation}
Since $q_1$ and $q_2$ have no prime factor less than $D_0$, we have either $q_0=1$ or $q_0\ge D_0$. By Cauchy's inequality we have
\begin{equation}
	\sum_r \sum_{q_0>1}
	\mathcal{U}(r,q_0)
	\ll
	\sum_{D_0 < q_0 \le 2Q}
	\sum_r
	\sum_m f(m)
	\sum_{q'} 
	\sum_{n_1 \equiv a \overline{m} (q'r)}
	|\beta(n_1)|^2
	\sum_{q_2}
	\sum_{n_2\equiv a \overline{m} (q_2r)}
	1.
\end{equation}
By Lemma \ref{lemma:8} the two inner sums of the above is
\begin{equation}
	\ll \sum_{n \equiv a \overline{m} (q_0r)}
	\tau^B(mn-a) \tau^B(q_0)
	\ll N(D_0R)^{-1} \mathcal{L}^B.
\end{equation}
This and Lemma \ref{lemma:8} imply
\begin{equation}
	\sum_r \sum_{q_0>1}
	\mathcal{U}(r,q_0)
	\ll N(D_0 R)^{-1} \mathcal{L}^B
	\sum_m \sum_n
	\tau^B(mn-a)
	\ll N R^{-1} \mathcal{L}^BX D_0^{-1}.
\end{equation}
This bound is admissible provided 
\begin{equation} \label{eq:D02}
	D_0 = X^{\varpi^{4/3}}
\end{equation}
which we henceforth assume.

We now turn to $\mathcal{U}(r,1)$. Assume $|\mu(q_1q_2r)|=1$. In the case $(n_1,q_1r)=(n_2,q_2r)=1$, the innermost sum in \eqref{eq:10.2} is, by Lemma \ref{lemma:7}, equal to
\begin{equation}
	\frac{1}{q_1q_2r}
	\sum_{|h|< H_1}
	\hat{f} \left(\frac{h}{q_1q_2r} \right)
	e_{q_1q_2r} (-\mu h) + O(d^{-1}),
\end{equation}
where
\begin{equation} \label{eq:10.7}
	H_1 = 8Q^2RM^{-1+2\epsilon}.
\end{equation}
It follows that
\begin{equation} \label{eq:10.9}
	\mathcal{U}(r,1)
	= \hat{f}(0) X^*(r) + \mathcal{R}_1(r) +O(1),
\end{equation}
where
\begin{equation}
	X^*(r)
	= \sum_{q_1} \sum_{(q_2,q_1)=1}
	\frac{c(q_1,r) c(q_2,r)}{q_1q_2r}
	\sum_{{(n_1,q_1r)=1}}
	\sum_{\substack{n_2\equiv n_1 (r)\\ (n_2,q_2)=1}}
	\beta(n_1) \beta(n_2)
\end{equation}
and
\begin{equation} \label{eq:10.10}
	\mathcal{R}_1(r)
	= \sum_{q_1} \sum_{(q_2,q_1)=1}
	\frac{c(q_1,r) c(q_2,r)}{q_1q_2r}
	\sum_{\substack{n_2\equiv n_1 (r)\\ (n_2,q_2)=1}}
	\beta(n_1) \beta(n_2)
	\sum_{1\le |h|< H}
	\hat{f}\left(\frac{h}{q_1q_2r} \right)
	e_{q_1q_2r}(-\mu h).
\end{equation}
By \eqref{eq:10.2}, \eqref{eq:10.3}, and \eqref{eq:10.9} we conclude that
\begin{equation}
	\sum_r \mathcal{S}_1(r)
	=\sum_r (\hat{f}(0) X^*(r) + \mathcal{R}_1(r))
	+ O(XN (D_0 R)^{-1} \mathcal{L}^B).
\end{equation}
In view of \eqref{eq:8.1}, the proof of \eqref{eq:10.1} is thus reduced to showing that
\begin{equation} \label{eq:10.11}
	\sum_r (X^*(r) - X(r))
	\ll N^2R^{-1} X^{-\varpi^{5/3}}.
\end{equation}

We have
\begin{equation}
	X^*(r) - X(r)
	= \sum_{q_1} \sum_{(q_2,q_1)=1}
	\frac{c(q_1,r) c(q_2,r)}{q_1q_2r}
	\mathcal{V}(r;q_1,q_2),
\end{equation}
with
\begin{equation}
	\mathcal{V}(r;q_1,q_2)
	= \sum_{(n_1,q_1r)=1}
	\sum_{\substack{n_2\equiv n_1(r)\\ (n_2,q_2)=1}}
	\beta(n_1)\beta(n_2)
	- \frac{1}{\varphi(r)}
	\sum_{(n_1,q_1r)=1} \sum_{(n_2,q_2r)=1}
	\beta(n_1)\beta(n_2).
\end{equation}
It follows that
\begin{equation} \label{eq:10.12}
	\sum_r (X^*(r) - X(r))
	\ll \frac{1}{R}
	\sum_{q_1\sim Q} \sum_{q_2\sim Q}
	\frac{1}{q_1q_2}
	\sum_{\substack{r\sim R\\ (r,q_1q_2)=1}}
	|\mathcal{V}(r;q_1,q_2)|.
\end{equation}
Noting that
\begin{equation}
	\mathcal{V}(r;q_1,q_2)
	= \displaystyle\sideset{}{^*}\sum_{\ell (\text{mod } r)}
	\left(\sum_{\substack{n\equiv \ell (r)\\ (n,q_1)=1}} \beta(n)
	- \frac{1}{\varphi(r)} \sum_{(n,q_1r)=1}\beta(n) \right)
	\left(\sum_{\substack{n\equiv \ell (r)\\ (n,q_2)=1}} \beta(n)
	- \frac{1}{\varphi(r)} \sum_{(n,q_2r)=1}\beta(n) \right),
\end{equation}
and by Cauchy's inequality and Lemma \ref{lemma:10}, we find that the innermost sum in \eqref{eq:10.12} is
\begin{equation}
	\ll \tau(q_1q_2)^B N^2 X^{-\varpi/12},
\end{equation}
which leads to \eqref{eq:10.11}.

Combining \eqref{eq:8.3}, \eqref{eq:9.1}, and \eqref{eq:10.1} leads to
\begin{equation} \label{eq:10.13}
	\sum_r (\mathcal{S}_1(r) - 2\mathcal{S}_2(r) + \mathcal{S}_3(r))
	= \sum_r 
	\mathcal{R}_1(r)
	+ O(XN R^{-1} X^{-\varpi^{5/3}}).
\end{equation}
Note that
\begin{equation}
	\frac{\mu}{q_1q_2r}
	\equiv
	\frac{a \overline{q_1q_2 n_1}}{r}
	+ \frac{a \overline{q_2rn_1}}{q_1}
	+ \frac{a \overline{q_1rn_2}}{q_2}\quad
	(\text{mod } 1)
\end{equation}
by \eqref{eq:10.8}. Hence, on substituting $n_2=n_1+kr$, we may write $\mathcal{R}_1(r)$ as
\begin{equation} \label{eq:10.14}
	\mathcal{R}_1(r)
	= \frac{1}{r} \sum_{|k|<N/R}
	\mathcal{R}_1(r,k),
\end{equation}
where
\begin{align}
	\mathcal{R}_1(r,k)
	=&\sum_{q_1} \sum_{(q_2,q_1)=1}
	\frac{c(q_1,r) c(q_2,r)}{q_1q_2}
	\sum_{1\le |h|< H_1}
	\hat{f}\left(\frac{h}{q_1q_2r} \right)\\
	&\times
	\sum_{\substack{(n,q_1r)=1\\ (n+kr,q_2)=1}}
	\beta(n) \beta(n+kr)
	e(-h\xi(r; q_1, q_2; n, k)),
\end{align}
with
\begin{equation}
	\xi(r; q_1, q_2; n, k) = 
	\frac{a \overline{q_1q_2 n}}{r}
	+ \frac{a \overline{q_2rn}}{q_1}
	+ \frac{a \overline{qr(n+ kr)}}{q_2}.
\end{equation}
Thus, the proof of \eqref{eq:7.9} will follow from the bound
\begin{equation} \label{eq:10.15}
	\mathcal{R}_1(r,k)
	\ll X^{1-\varpi/2}
\end{equation}
which we shall prove in the next two subsections. We note that the bound \eqref{eq:10.15} amounts to saving a power of $X$ from the trivial estimate; indeed, it trivially follows from \eqref{eq:8.1} that
\begin{equation}
	\mathcal{R}_1(r,k)
	\ll X^{1+\epsilon} H_1.
\end{equation}
On the other hand, in view of \eqref{eq:2.13}, since
\begin{equation}
	H_1 \ll X^\epsilon (QR)^2 (MN)^{-1} NR^{-1},
\end{equation}
and, by the first inequality in \eqref{eq:7.4} and \eqref{eq:7.1},
\begin{equation} \label{eq:10.16}
	NR^{-1} <
	\begin{cases}
	X^{\varpi + \epsilon}, & \text{if } X_1<N\le X_2,\\
	X^{4\varpi}, & \text{if } X_2< N< 2X^{1/2},
	\end{cases}
\end{equation}
it follows from the second inequality in \eqref{eq:7.5} that
\begin{equation} \label{eq:10.17}
	H_1 \ll
	\begin{cases}
	X^{5\varpi + 2\epsilon} & \text{if } X_1<N\le X_2,\\
	X^{8\varpi + \epsilon} & \text{if } X_2<N\le 2X^{1/2}
	\end{cases}
\end{equation}
is bounded by a small power of $X$.

\subsubsection{Estimation of $\mathcal{R}_1(r,k)$: The Type I case}

In this and the next subsection we assume that $|k|<NR^{-1}$, and we write
\begin{equation}
	\mathcal{R}_1,\quad
	c(q_1),\quad
	c(q_2),\quad
	\text{and}\quad
	\xi(q_1,q_2;n)
\end{equation}
for
\begin{equation}
	\mathcal{R}_1(r,k),\quad
	c(q_1,r),\quad
	c(q_2,r),\quad
	\text{and}\quad
	\xi(r; q_1, q_2; n, k),
\end{equation}
respectively, with the goal of proving \eqref{eq:10.15}. The variables $r$ and $k$ may also be omitted for notational simplicity. The proof is  analogous to the estimation of $\mathcal{R}_2(r)$; the main ingredient is Lemma \ref{lemma:11}.

Assume that $X_1<N\le X_2$ and $R^*$ is as in \eqref{eq:7.1}. We have
\begin{equation} \label{eq:11.1}
	\mathcal{R}_1
	\ll N^\epsilon
	\sum_{q_1} \frac{c(q_1)}{q_1} 
	\sum_{\substack{n\sim N\\ (n,q_1r)=1}}
		|\mathcal{F}(q_1,n)|,
\end{equation} 
where
\begin{equation}
	\mathcal{F}(q_1,n)
	= \sum_{0< |h| < H_1}
	\sum_{(q_2,q_1(n+kr))=1}
	\frac{c(q_2)}{q_2}
	\hat{f} \left(\frac{h}{q_1q_2r} \right)
	e(-h\xi(q_1,q_2;n)).
\end{equation}
We assume $c(q_1)\neq 0$. To bound the sum of $|\mathcal{F}(q_1,n)|$ we observe that, similar to \eqref{eq:9.8},
\begin{equation} \label{eq:11.2}
	M^{-2}\sum_{\substack{n\sim N\\ (n,q_1r)=1}}
	|\mathcal{F}(q_1,n)|^2
	\ll 
	\sum_{(q_2,q_1)=1} \sum_{(q_2',q_1)=1}
	\frac{|c(q_2)c(q_2')|}{q_2q_2'}
	\sum_{0< |h|< H_1} \sum_{0< |h'|< H_1}
	|\mathcal{G}(h,h'; q_1, q_2; q_2')|,
\end{equation}
where
\begin{equation}
	\mathcal{G}(h,h'; q_1, q_2; q_2')
	= \sum_{\substack{n\sim N\\ (n,q_1r)=1\\ (n+kr,q_2q_2')=1}}
	e(h'\xi(q_1, q_2';n) 
	- h\xi(q_1, q_2;n)).
\end{equation}
The condition $N\le X_2$ is essential for bounding the diagonal terms $h'q_2=hq_2'$ in \eqref{eq:11.2}. By \eqref{eq:7.5} we have
\begin{equation}
	H_1Q^{-1}
	\ll X^\epsilon(QR) (MN)^{-1} N
	\ll X^{-2\varpi+\epsilon}.
\end{equation}
It follows that, on the right side of \eqref{eq:11.2}, the contribution from terms with $h'q_2=hq_2'$ is
\begin{equation} \label{eq:11.3}
	\ll NQ^{-2}
	\sum_{1\le |h|< H_1}
	\sum_{q\sim Q}
	\tau(hq)^B
	\ll X^{-2\varpi+\epsilon} N.
\end{equation}

Now assume that $c(q_2)c(q_2')\neq 0$, $(q_2q_2',q_1)=1$, and $h'q_2\neq hq_2'$. We have
\begin{align}
	h'\xi(q_1, q_2';n) 
	- h\xi(q_1, q_2;n)
	\equiv \
	&\frac{h' \overline{q_2'} - h \overline{q_2} a \overline{q_1 n}}{r}\\
	&+ \frac{h'\overline{q_2'} - h \overline{q_2} a \overline{rn}}{q_1}
	+ \frac{h' a \overline{q_1 r(n+kr)}}{q_2'}
	- \frac{ha \overline{q_1 r(n_kr)}}{q_2}\
	(\text{mod } 1).
\end{align}
Letting $d_1=q_1r$ and $d_2=[q_2,q_2']$, we may write
\begin{equation}
	\frac{h'\overline{q_2'} - h\overline{q_2} a \overline{q_1}}{r}
	+ \frac{h'\overline{q_2'} - h\overline{q_2} a \overline{r}}{q_1}
	\equiv \frac{c_1}{d_1}\ (\text{mod }1)
\end{equation}
for some $c_1$ with
\begin{equation}
	(c_1,r) = (h'\overline{q_2'} - h \overline{q_2},r),
\end{equation}
and
\begin{equation}
	\frac{h' a \overline{q_1 r}}{q_2'}
	- \frac{ha \overline{q_1r}}{q_2}
	\equiv \frac{c_2}{d_2}\ (\text{mod }1)
\end{equation}
for some $c_2$, so that
\begin{equation}
	h'\xi(q_1, q_2';n) 
	- h\xi(q_1, q_2;n)
	\equiv \frac{c_1\overline{n}}{d_1} + \frac{c_2 \overline{(n+kr)}}{d_2}\
	(\text{mod }1).
\end{equation}
Since $(d_1,d_2)=1$, it follows by Lemma \ref{lemma:11} that
\begin{equation} \label{eq:11.4}
	\mathcal{G}(h,h'; q_1, q_2; q_2')
	\ll (d_1d_2)^{1/2+\epsilon}
	+ \frac{(c_1,d_1)N}{d_1}.
\end{equation}
By the condition $N>X_1$, this gives, by \eqref{eq:10.16},
\begin{equation} \label{eq:11.5}
	R^{-1} < X^{\varpi+\epsilon} N^{-1}
	< X^{-3/4-15\varpi+\epsilon} N.
\end{equation}
Together with \eqref{eq:7.5}, this yields
\begin{equation}
	(d_1d_2)^{1/2}
	\ll (Q^3R)^{1/2}
	\ll X^{3/4+3\varpi} R^{-1}
	\ll X^{-12\varpi+\epsilon}N.
\end{equation}
A sharper bound for the second term on the right side of \eqref{eq:11.4} can be obtained as follows. In a way similar to the proof of \eqref{eq:9.14}, we find that
\begin{equation}
	(c_1,d_1) \le (c_1,r) q_1 \ll H_1 Q^2.
\end{equation}
It follows by \eqref{eq:10.17}, \eqref{eq:7.5}, and the first inequality in \eqref{eq:11.5} that
\begin{equation}
	\frac{(c_1,d_1)}{d_1}
	\ll H_1 (QR) R^{-2}
	\ll X^{1/2+9\varpi+4\epsilon} N^{-2}
	\ll X^{-1/4-6\varpi}.
\end{equation}
Here the condition $N>X_1$ is used again. Combining these estimates with \eqref{eq:11.4} we deduce that
\begin{equation}
	\mathcal{G}(h,h'; q_1, q_2; q_2')
	\ll X^{-12\varpi+\epsilon} N.
\end{equation}
Together with \eqref{eq:10.17}, this implies that, on the right side of \eqref{eq:11.2}, the contribution from terms with $h'q_2\neq hq_2'$ is
\begin{equation}
	\ll X^{-12 \varpi+\epsilon} H_1^2 N
	\ll X^{-2\varpi + 5\epsilon} N,
\end{equation}
which has the same order of magnitude as the right side of \eqref{eq:11.3} essentially. Combining these estimates with \eqref{eq:11.2} we obtain
\begin{equation}
	\sum_{\substack{n\sim N\\ (n,q_1r)=1}}
	|\mathcal{F}(q_1,n)|^2
	\ll X^{1-2\varpi+5\epsilon}M.
\end{equation}
This yields, by Cauchy's inequality,
\begin{align} \label{eq:11.6}
	\sum_{\substack{n\sim N\\ (n,q_1r)=1}}
	|\mathcal{F}(q_1,n)|
	&\ll \left(\sum_{\substack{n\sim N\\ (n,q_1r)=1}} 1\right)^{1/2}
	\left(\sum_{\substack{n\sim N\\ (n,q_1r)=1}}
	|\mathcal{F}(q_1,n)|^2 \right)^{1/2}\\
	&\ll N^{1/2}(X^{1-2\varpi + 5\epsilon}M)^{1/2}
	\ll X^{1-\varpi+3\epsilon}.
\end{align}
The estimate \eqref{eq:10.15} follows from \eqref{eq:11.1} and \eqref{eq:11.6} immediately.

\subsubsection{Estimation of $\mathcal{R}_1(r,k)$: The Type II case}
We now assume that $X_2<N<2X^{1/2}$ and $R^*$ is as in \eqref{eq:7.1}. We have
\begin{equation} \label{eq:12.1}
	\mathcal{R}_1 \ll N^\epsilon
	\sum_{\substack{n\sim N\\ (n,r)=1}}
	|\mathcal{K}(n)|,
\end{equation}
where
\begin{equation}
	\mathcal{K}(n)
	= \sum_{(q_1,n)=1}
	\sum_{(q_2,q_1(n+kr))=1}
	\frac{c(q_1)c(q_2)}{q_1q_2}
	\sum_{1\le |h|< H_1}
	\hat{f} \left(\frac{h}{q_1q_2r} \right)
	e(-h \xi(q_1,q_2;n)).
\end{equation}
Let $\displaystyle\sideset{}{^\#}\sum$ stands for a summation over the tuples $(q_1,q_2; q_1', q_2')$ with
\begin{equation}
	(q_1,q_2) = (q_1',q_2') = 1.
\end{equation}
To estimate the sum of $\mathcal{K}(n)$ we observe that, similar to \eqref{eq:9.8},
\begin{equation} \label{eq:12.2}
	M^{-2}
	\sum_{\substack{n\sim N\\ (n,r)=1}}
	|\mathcal{K}(n)|^2
	\ll \displaystyle\sideset{}{^\#}\sum
	\frac{c(q_1)c(q_2)c(q_1')c(q_2')}{q_1q_2q_1'q_2'}
	\sum_{1\le |h|< H_1}
	\sum_{1\le |h'|< H_1}
	|\mathcal{M}(h,h'; q_1, q_2; q_1', q_2')|,
\end{equation}
where
\begin{equation}
	\mathcal{M}(h,h'; q_1, q_2; q_1', q_2')
	= \displaystyle\sideset{}{^\prime}\sum_{n\sim N}
	e(h' \xi(q_1,q_2;n) - h \xi(q_1,q_2;n)).
\end{equation}
Here $\displaystyle\sideset{}{^\prime}\sum$ denote a sum is restricted to $(n,q_1q_1'r) = (n+kr, q_2q_2')=1$.

Similar to \eqref{eq:9.9}, we have
\begin{equation}
	H_1Q^{-2} \ll X^{-3\varpi+\epsilon}.
\end{equation}
Hence, on the right side of \eqref{eq:12.2}, the contribution from terms with $h'q_1q_2=hq_1'q_2'$ is
\begin{equation} \label{eq:12.3}
	\ll NQ^{-4}
	\sum_{1\le |h|< H_1}
	\sum_{q\sim Q} \sum_{q'\sim Q}
	\tau(hqq')^B
	\ll X^{-3\varpi+\epsilon}N.
\end{equation}

Note that the bounds \eqref{eq:9.12} and \eqref{eq:9.13} are valid in this present situation. Since $R$ is slightly smaller than $X^{1/2}$ and $Q$ is small, by Lemma \ref{lemma:11}, contribution from terms with $h'q_1q_2 \neq hq_1'q_2'$ on the right side of \eqref{eq:12.2} is small compare to \eqref{eq:12.3}. Assume that
\begin{equation}
	c(q_1)c(q_2)c(q_1')c(q_2') \neq 0,\quad
	(q_1,q_2) = (q_1',q_2')=1,\quad
	\text{and}\quad
	h'q_1q_2\neq hq_1'q_2'.
\end{equation}
We have
\begin{equation} \label{eq:12.4}
	h' \xi(q_1',q_2';n) - h\xi(q_1,q_2;n)
	\equiv 
	\frac{s \overline{n}}{r}
	+ \frac{t_1 \overline{n}}{q_1}
	+ \frac{t_1' \overline{n}}{q_1'}
	+ \frac{t_2 \overline{(n+kr)}}{q_2}
	+ \frac{t_2' \overline{(n+kr)}}{q_2'}\
	 (\text{mod }1)
\end{equation}
with
\begin{align}
	s &\equiv a (h' \overline{q_1'q_2'} - h\overline{q_1q_2})\ (\text{mod }r),
	\\
	t_1 &\equiv -ah \overline{q_2 r}\ (\text{mod } q_1),
	\\
	t_1' &\equiv ah' \overline{q_2'r}\ (\text{mod } q_1'),
	\\
	t_2 &\equiv -ah \overline{q_1 r}\ (\text{mod } q_2),\\
	t_2' &\equiv ah' \overline{q_1'r}\ (\text{mod } q_2').
\end{align}
Letting $d_1=[q_1,q_1']r$ and $d_2=[q_2,q_2']$, we may rewrite \eqref{eq:12.4} as
\begin{equation}
	h' \xi(q_1',q_2';n) - h\xi(q_1,q_2;n) 
	\equiv 
	\frac{c_1 \overline{n}}{d_1}
	+ \frac{c_2 \overline{(n_kr)}}{d_2}\
	(\text{mod }1)
\end{equation}
for some $c_1$ and $c_2$ with
\begin{equation}
	(c_1,r) = (h'\overline{q_1'q_2'} - h\overline{q_1q_2},r).
\end{equation}
It follows by Lemma \ref{lemma:11} that
\begin{equation} \label{eq:12.5}
	\mathcal{M}
	\ll (d_1d_2)^{1/2+\epsilon}
	+ \frac{(c_1,d_1)(d_1,d_2)^2N}{d_1}.
\end{equation}
By \eqref{eq:7.5} and \eqref{eq:9.13} we have
\begin{equation}
	(d_1d_2)^{1/2}
	\ll (Q^4R)^{1/2}
	\ll X^{1/4+16\varpi}.
\end{equation}
On the other hand, we have $(d_1,d_2)\le (q_1q_1',q_2q_2')\ll Q^2$, since $(q_2q_2',r)=1$, and, similar to \eqref{eq:9.14},
\begin{equation}
	(c_1,d_1)
	\le (c_1,r)[q_1,q_1']
	\ll [q_1,q_1'] H_1Q^2.
\end{equation}
It follows by \eqref{eq:10.16}, \eqref{eq:9.13}, and the first inequality in \eqref{eq:9.12} that
\begin{equation}
	\frac{(c_1,d_1)(d_1,d_2)^2N}{d_1}
	\ll H_1 N Q^6 R^{-1}
	\ll X^{72\varpi}.
\end{equation}
Combining these estimates with \eqref{eq:12.5} we deduce that
\begin{equation}
	\mathcal{M}(h,h'; q_1, q_2; q_1', q_2')
	\ll X^{1/4+ 16\varpi + \epsilon}.
\end{equation}
Together with \eqref{eq:10.16}, this implies that, on the right side of \eqref{eq:12.2}, the contribution from terms with $h'q_1q_2\neq hq_1'q_2'$ is
\begin{equation}
	X^{1/4+16\varpi+\epsilon} H_1^2
	\ll X^{1/4+33\varpi},
\end{equation}
which is sharper than the right side of \eqref{eq:12.3}. Combining these estimates with \eqref{eq:12.2} we obtain
\begin{equation} \label{eq:12.6}
	\sum_{\substack{n\sim N\\ (n,r)=1}}
	|\mathcal{K}(n)|
	\ll X^{1-\varpi}.
\end{equation}
The estimate \eqref{eq:10.15} follows from \eqref{eq:12.1} and \eqref{eq:12.6} immediately. This completes the proof of Theorem \ref{thm:maintheoremtauk}.

\section{Proof of uniform power savings Theorem \ref{thm:maintheoremtaukOnLindelof}} \label{section:proofOfUniformPowerSavings}

Let $\chi$ be a primitive character (mod $d$) and $L(s,\chi)$ denote its Dirichlet $L$-function. On the Generalized Lindel\"of Hypothesis, we have, for $\sigma\ge 1/2$, 
	\begin{equation} \label{eq:LindelofBound}
	L(s,\chi)^k \ll
	(d|s|)^{\epsilon k}
	\end{equation}
for any $\epsilon>0$; see, e.g., \cite{ConreyGhosh2006}. This bound will allow us to significantly improve the estimate in \eqref{eq:dsmallpowerofx}.

This Lemma is a truncated Perron's formula.

\begin{lemma} \label{lemma:perronFormula}
	Let
	\begin{equation}
		\delta(X) = 
		\begin{cases}
		0, & \text{if } 0<X<1,\\
		1/2,& \text{if } X=1,\\
		1,& \text{if } X>1
		\end{cases}
	\end{equation}
	and
	\begin{equation}
		I(X,T)
		= \frac{1}{2\pi i}
		\int_{c- iT}^{c+ iT}
		\frac{X^s}{s} ds.
	\end{equation}
	Then
	\begin{equation}
		\delta(X) = \frac{1}{2\pi i}
		\int_{c- i\infty}^{c+ i\infty}
		\frac{X^s}{s} ds,
	\end{equation}
	and, for $X>0$, $c>0$, and $R>0$, we have
	\begin{equation}
		|I(X,T) - \delta(X)|
		< \begin{cases}
		X^c \min\{1, T^{-1}|\log X|^{-1} \},& \text{if } X\neq 1,\\
		c/T,& \text{if } X=1.
		\end{cases}
	\end{equation}
\end{lemma}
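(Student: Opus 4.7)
The plan is to treat the three cases $X > 1$, $0 < X < 1$, and $X = 1$ separately by contour integration, using the residue theorem applied to carefully chosen contours. For the infinite integral identity, I would apply the residue theorem to a rectangular contour with vertices $c \pm iT$ and $-U \pm iT$ (for $X > 1$), whose only interior singularity is the simple pole of $X^s/s$ at $s = 0$ with residue $1$; the horizontal sides contribute $O(X^c/(T|\log X|))$ and the far left side is dominated by $X^{-U}$. Letting $T, U \to \infty$ yields the value $1$. The $0 < X < 1$ case is symmetric, with the rectangle pushed to the right, picking up no residue. The $X = 1$ case is a direct principal-value computation: parametrize $s = c + it$, split into real and imaginary parts, use oddness to kill the imaginary part, and obtain $I(1,T) = \pi^{-1}\arctan(T/c) \to 1/2$.

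For the truncated bound with $X \neq 1$, the two entries of the minimum on the right-hand side correspond to two complementary estimates derived from two different contours that close the segment $[c-iT, c+iT]$. The first estimate, $X^c/(T|\log X|)$, comes from the rectangular contour as above but with $T$ held fixed; the horizontal edges each contribute at most
\begin{equation*}
\frac{1}{2\pi T}\int X^\sigma \, d\sigma \;\le\; \frac{X^c}{2\pi T |\log X|},
\end{equation*}
and the far vertical side vanishes in the limit, leaving exactly this error above $\delta(X)$. The second estimate, the unconditional $X^c$, I would obtain by replacing the rectangle with the semicircular arc $\{s = c + Te^{i\theta} : \theta \in [\pi/2, 3\pi/2]\}$, oriented to the left when $X > 1$ (and symmetrically to the right when $X < 1$). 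Provided $T \ge 2c$, we have $|s| \ge T - c \ge T/2$ throughout the arc and $|X^s| \le X^c$ since $\Re s \le c$ on this half-plane; thus $|X^s/s| \le 2X^c/T$, and the arc length $\pi T$ combined with the $1/(2\pi)$ factor gives exactly $X^c$. The pole at $s = 0$ is enclosed for $X > 1$ (producing the residue $1 = \delta(X)$) and excluded for $X < 1$.

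The $X = 1$ bound is immediate from the closed-form evaluation above, since $|I(1,T) - 1/2| = \pi^{-1}\arctan(c/T) \le c/(\pi T) < c/T$. There is no deep analytical obstacle, but two points demand care. First, the sign of $\log X$ must be tracked through $\int X^\sigma \, d\sigma = X^\sigma/\log X$ so that the absolute value $|\log X|$ emerges correctly in the denominator in both cases $X > 1$ and $X < 1$; this is where the symmetry between closing left versus right is encoded. Second, the small-$T$ regime $T < 2c$ (where the semicircle argument breaks down) must be handled, but here the trivial estimate $|I(X,T)| \le X^c \cdot \pi^{-1}\arctan(T/c) < X^c$ applied directly on the segment already gives what is needed, so no further work is required to complete the proof.
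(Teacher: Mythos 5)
The paper's ``proof'' of this lemma is a one-line citation to \cite[Theorem 4.1.4]{MurtyBook2007}, so you are supplying a genuinely more detailed argument. Your overall strategy (contour integration for the infinite case, two complementary contours for the two entries of the $\min$, direct $\arctan$ computation for $X=1$) is the standard one and is correct in outline. The $X=1$ case and the $X^c/(T|\log X|)$ estimate via the rectangle are fine.

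However, there is a real gap in your derivation of the unconditional $X^c$ bound. You take the semicircular arc centered at $c$ with radius $T$, and you note that this requires $T\ge 2c$ to ensure $|s|\ge T/2$; you propose to dispose of the regime $T<2c$ by the ``trivial estimate'' $|I(X,T)|\le X^c\cdot\pi^{-1}\arctan(T/c)$. That estimate is false for $X\ne 1$: the cancellation that produces the $\arctan$ when $X=1$ comes from the oddness of the imaginary part of $1/(c+it)$, which is destroyed by the oscillating factor $X^{it}$. What one actually has is
\begin{equation}
|I(X,T)| \;\le\; \frac{X^c}{2\pi}\int_{-T}^{T}\frac{dt}{\sqrt{c^2+t^2}} \;=\; \frac{X^c}{\pi}\,\mathrm{arcsinh}(T/c),
\end{equation}
which is unbounded in $T$ and does not yield $|I(X,T)-\delta(X)|<X^c$ in general (take $X>1$ with $X^c$ barely above $1$, so that $|I-1|\le |I|+1$ can exceed $X^c$). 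Moreover, your semicircle centered at $c$ with radius $T$ fails to enclose the pole at $s=0$ whenever $T\le c$, so the residue bookkeeping breaks down precisely where your fallback estimate is also deficient. The standard fix, used by Murty and Titchmarsh, is to center the arc at the origin: take the arc of the circle $|s|=R$ with $R=\sqrt{c^2+T^2}$ joining $c+iT$ to $c-iT$ through the half-plane $\Re s\le 0$ (for $X>1$; through $\Re s\ge c$ for $X<1$). On this arc $|s|=R$ exactly, the pole at $0$ is always enclosed (for $X>1$), and the arc length is strictly less than $2\pi R$, giving the clean strict bound $<X^c$ uniformly in $T>0$ with no case distinction. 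With that substitution, your proof is complete.
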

\begin{proof}
	See \cite[Theorem 4.1.4]{MurtyBook2007}.
\end{proof}

With \eqref{eq:LindelofBound} we can strengthen Lemma \ref{lemma:tinymoduli}, which was used in the proof of Theorem \ref{thm:maintheoremtauk}, to
	
\begin{prop} \label{prop:SWOnLindelof}
	Assume the Generalized Lindel\"of Hypothesis. For $\chi$ a primitive character (mod $d$) we have
	\begin{equation} \label{eq:SWOnLindelof}
	\sum_{n\le X} \tau_k(n)\chi(n) 
	\ll X^{7/8} d^{1/2}.
	\end{equation}
\end{prop}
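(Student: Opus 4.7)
The plan is to follow the Mellin--Perron structure of Lemma~\ref{lemma:tinymoduli} with two key modifications: replace the convexity bound \eqref{eq:Lkbound} on $L(s,\chi)^k$ by the Lindel\"of bound \eqref{eq:LindelofBound}, and shift the contour only to $\sigma=3/4$ rather than all the way to $\sigma=1/2$. On the half-plane $\sigma\ge 1/2$, \eqref{eq:LindelofBound} yields $|L(s,\chi)^k|\ll (d|s|)^{\epsilon k}$ for every $\epsilon>0$; choosing $\epsilon=1/(2k)$ delivers $|L(s,\chi)^k|\ll (d|s|)^{1/2}$ with an implied constant depending on $k$ but with exponents on $d$ and $|s|$ that do not.

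First I would apply the truncated Perron formula (Lemma~\ref{lemma:perronFormula}) to the Dirichlet series $L(s,\chi)^k=\sum_n\tau_k(n)\chi(n)n^{-s}$ with abscissa $c=1+1/\log X$ and truncation height $T=X^{1/4}$. Combined with the standard Perron tail estimate and the trivial bound $\tau_k(n)\ll n^\epsilon$ from Lemma~\ref{lemma:tauBound}, this gives
\begin{equation*}
\sum_{n\le X}\tau_k(n)\chi(n)=\frac{1}{2\pi i}\int_{c-iT}^{c+iT}L(s,\chi)^k\,\frac{X^s}{s}\,ds + O(X^{3/4+\epsilon}).
\end{equation*}
Since $\chi$ is primitive of conductor $d\ge 2$, $L(s,\chi)^k$ is entire, so the contour can be shifted from $\sigma=c$ to $\sigma=3/4$ without encountering any residues. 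Using \eqref{eq:LindelofBound} with $\epsilon k=1/2$, the vertical segment at $\sigma=3/4$ contributes
\begin{equation*}
\int_{3/4-iT}^{3/4+iT}L(s,\chi)^k\,\frac{X^s}{s}\,ds\ll X^{3/4}\int_{-T}^{T}\frac{(d(1+|t|))^{1/2}}{1+|t|}\,dt\ll X^{3/4}d^{1/2}T^{1/2}=X^{7/8}d^{1/2},
\end{equation*}
while the two horizontal connectors $[\tfrac{3}{4}\pm iT,\,c\pm iT]$ are bounded by $(dT)^{1/2}X^{c}/T\ll X^{7/8+\epsilon}d^{1/2}$ by the same estimate. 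Summing these contributions with the Perron error yields the bound \eqref{eq:SWOnLindelof}.

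The main technical subtlety will be that selecting $\epsilon=1/(2k)$ in \eqref{eq:LindelofBound} forces the implicit constant to depend on $k$ through a factor of the shape $C_{1/(2k)}^k$. This is harmless for the use in Theorem~\ref{thm:maintheoremtaukOnLindelof}, which only asks that the \emph{exponents} of $X$ and $d$ (and hence the power saving over the trivial bound) be $k$-uniform, not the implicit constants. With the choice $T=X^{1/4}$ the three contributions (vertical integral, horizontal shift, Perron error) all balance at the claimed level $X^{7/8}d^{1/2}$, completing the argument; the aesthetic role of shifting only to $\sigma=3/4$, rather than to the critical line, is precisely to make this balance come out cleanly and uniformly in $k$.
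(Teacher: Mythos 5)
Your argument is correct and is essentially the same proof as the paper's: truncated Perron's formula applied to $L(s,\chi)^k$, the Lindel\"of bound \eqref{eq:LindelofBound} specialized to $\epsilon=1/(2k)$ to give $|L(s,\chi)^k|\ll (d|s|)^{1/2}$, and a contour shift followed by balancing the three pieces to land at $X^{7/8}d^{1/2}$. The only differences are parameter choices: the paper takes the initial abscissa $c=9/8$, shifts all the way to $\sigma=1/2$, and uses $T=X^{1/2}$, whereas you take $c=1+1/\log X$, shift only to $\sigma=3/4$, and use $T=X^{1/4}$; both yield identical exponents of $X$ and $d$. Your remark about the $k$-dependence of the implied constant being harmless matches the paper's intent in Remark~\ref{remark:D2}.
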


\begin{proof}
	The proof of this proposition is in principal very similar to that of Lemma \ref{lemma:tinymoduli}. Indedd, we estimate directly the left side of \eqref{eq:SWOnLindelof} using the truncated Perron's formula, getting
	\begin{equation} \label{eq:LindelofBound3}
		\sum_{n\le X} \tau_k(n)\chi(n)
		=
		\frac{1}{2\pi i} \int_{9/8-iT}^{9/8+iT}
		L(s,\chi)^k \frac{X^s}{s} ds
		+ O\left(\frac{X^{9/8}}{T} \right).
	\end{equation}
	Since $\chi$ is nonprincipal, the function $L(s,\chi)^k$ is analytic and has no poles in $\sigma\ge 1/2$. We move the line of integration to $\sigma=1/2$ and apply Cauchy's theorem. On the generalized Lindel\"of Hypothesis, we apply \eqref{eq:LindelofBound} with $\epsilon=1/2k$ giving
	\begin{equation}
		L(s,\chi)^k \ll (d|s|)^{1/2},\quad
		\sigma \ge 1/2.
	\end{equation}
	The contribution from horizontal segments is
	\begin{equation}
		\left|\frac{1}{2\pi i} 
		\left( \int_{9/8+ iT}^{1/2+ iT} 
		+ \int_{9/8- iT}^{1/2- iT}  \right)
		L(s,\chi)^k \frac{X^s}{s} ds \right|
		\ll d^{1/2} \frac{X^{9/8}}{T^{1/2}},
	\end{equation}
	and contribution from the vertical segment $\sigma=1/2$ is
	\begin{equation}
		\left|\frac{1}{2\pi i} 
		\int_{1/2- iT}^{1/2+ iT} 
		L(s,\chi)^k \frac{X^s}{s} ds \right|
		\ll d^{1/2} X^{1/2} T^{1/2}.
	\end{equation}
	Hence, by Cauchy's theorem, \eqref{eq:LindelofBound3} becomes
	\begin{equation}
		\sum_{n\le X} \tau_k(n)\chi(n)
		\ll d^{1/2} \frac{X^{9/8}}{T^{1/2}}
		+ d^{1/2} X^{1/2} T^{1/2}
		+\frac{X^{9/8}}{T}.
	\end{equation}
	We choose $T=X^{1/2}$. Thus, the error term of the above is
	\begin{equation}
		\ll d^{1/2} X^{7/8}.
	\end{equation}
	This leads to the right side of \eqref{eq:SWOnLindelof}.
\end{proof}

\textit{Proof of Theorem \ref{thm:maintheoremtaukOnLindelof}.} By Proposition \ref{prop:2}, we have
\begin{equation} \label{eq:mediummoduli3}
	\sum_{\substack{d\in \mathcal{D}\\ D_2< d< D_3}}
	|\Delta(\tau_k;X,d,a)|
	\ll 
	X^{1 - \varpi^2}.
\end{equation}
By Proposition \ref{prop:SWOnLindelof} above together with the large sieve inequality \eqref{eq:largesieveineq}, in a way similar to the proof of Proposition \ref{prop:mediummoduli}, for $D \le D_2$, we get
\begin{equation} \label{eq:mediummoduli2}
	\sum_{d\le D}\max_{(a,d)=1}
	|\Delta(\tau_k;X,d,a)|
	\ll 
	X^{1 - \varpi^2}.
\end{equation}
This, together with \eqref{eq:mediummoduli3}, give the desired estimate
\begin{equation}\label{eq:tauk3}
	\sum_{\substack{d\in \mathcal{D}\\ d< X^{1/2+1/584}}}
	\left|
	\Delta(\tau_k;X,d,a)
	\right|
	\ll X^{1 - \varpi^2}.
\end{equation}

\section{Proofs of Theorems \ref{thm:meanSquareResult} and \ref{thm:pairOfTau_k}}

\subsection{Proof of theorem \ref{thm:meanSquareResult}}

We proceed analogously as in the proof of Lemma \ref{lemma:10} and Proposition \ref{prop:mediummoduli}. By \eqref{eq:nonprincipalsum2}, we have
\begin{equation}
\Delta({{\tau_k}; X, d,a})^2
=\frac{1}{\varphi(d)^2}
\displaystyle\sideset{}{'}\sum_{\chi_1 (\text{mod } d)}
\overline{\chi_1}(a)
\left(
\sum_{n\le X} \tau_k(n) \chi_1(n)
\right)
\displaystyle\sideset{}{'}\sum_{\chi_2 (\text{mod } d)}
\chi_2(a)
\overline{\left(
	\sum_{n\le X} \tau_k(n) \chi_2(n)
	\right)}.
\end{equation}
Summing over primitive $a (\text{mod }d)$ and changing the order of summation, we get
\begin{equation}
\displaystyle\sideset{}{^*}\sum_{a (\text{mod } d)}
\Delta({{\tau_k}; X, d,a})^2
= \frac{1}{\varphi(d)^2}
\displaystyle\sideset{}{'}\sum_{\chi_1 (\text{mod } d)}\
\displaystyle\sideset{}{'}\sum_{\chi_2 (\text{mod } d)}
{\left(
	\sum_{n\le X} \tau_k(n) \chi_1(n)
	\right)}
\overline{\left(
	\sum_{n\le X} \tau_k(n) \chi_2(n)
	\right)}
\displaystyle\sideset{}{^*}\sum_{a (\text{mod } d)}
\overline{\chi_1}(a) \chi_2(a).
\end{equation}
By the orthogonality relation \eqref{eq:928} this becomes
\begin{equation}
\displaystyle\sideset{}{^*}\sum_{a (\text{mod } d)}
\Delta({{\tau_k}; X, d,a})^2
= \frac{1}{\varphi(d)}
\left|
\displaystyle\sideset{}{'}\sum_{\chi (\text{mod } d)}
{\left(
	\sum_{n\le X} \tau_k(n) \chi(n)
	\right)}
\right|^2.
\end{equation}
We now reduce to primitive characters. By Lemma \ref{lemma:primitivesum}, we have
\begin{equation}
\sum_{d\le D}\
\displaystyle\sideset{}{^*}\sum_{a (\text{mod } d)}
\Delta({{\tau_k}; X, d,a})^2
\ll 
\log \mathcal{L}
\sum_{r\le D}
\frac{1}{r}
\left(
\sum_{1<q\le D/r}
\frac{1}{q}
\left|\
\displaystyle\sideset{}{^*}\sum_{\chi (\text{mod } q)}
\left(
\sum_{n\le X} \tau_k(n) \chi(n)
\right)
\right|^2
\right).
\end{equation}
By Lemma \ref{lemma:tinymoduli}, we get, for $1< q\le X^{1/3(k+2)}$,
\begin{equation}
\sum_{1<q\le X^{1/3(k+2)}}
\frac{1}{q}
\left|\
\displaystyle\sideset{}{^*}\sum_{\chi (\text{mod } q)}
\left(
\sum_{n\le X} \tau_k(n) \chi(n)
\right)
\right|^2
\ll \sum_{1<q\le X^{1/3(k+2)}}
\frac{1}{q}
\left(
\varphi(q) X^{2-\frac{2}{3(k+2)}}
\right)
\ll X^{2-\frac{1}{3(k+2)}}.
\end{equation}
Assume $X^{1/3(k+2)} \ll Q\ll D$. By the large sieve inequality \eqref{eq:largesieveineq} and the bound \eqref{eq:lemma81}, we have
\begin{equation}
\frac{1}{Q}
\sum_{q\sim Q}
\left| \
\displaystyle\sideset{}{^*}\sum_{\chi (\text{mod } q)}
\left(
\sum_{n\le X} \tau_k(n) \chi(n)
\right)
\right|^2
\ll \frac{1}{Q}(Q^2 + X)
\left(\sum_{n \le X} \tau_k(n) \right)^2
\ll \left(Q + \frac{X}{Q} \right) X \mathcal{L}^{k-1}.
\end{equation}
This leads to \eqref{eq:meanSquareResult}.

\subsection{Proof of theorem \ref{thm:pairOfTau_k}}
Denote
\begin{equation} \label{eq:largeSieve11}
	E(f_1,f_2; X, d, a)=
	\sum_{\substack{m,n\le X\\ m\equiv an (d)}}
	f_1(m) f_2(n)
	- \frac{1}{\varphi(d)}
	\left(\sum_{\substack{m\le X\\ (m,d)=1}} f_1(m)\right)
	\left(\sum_{\substack{n\le X\\ (n,d)=1}} f_2(n)\right).
\end{equation}
We start by writing
\begin{equation}
	\psi_1(\chi) = \sum_{m\le X} f_1(m) \chi(m)
	\quad	\text{and} \quad
	\psi_2(\chi) = \sum_{n\le X} f_2(n) \chi(n).
\end{equation}
We first prove the estimate \eqref{eq:pairOfTau_k}. Let $f_1=f_2=\tau_k$. 
By \eqref{eq:nonprincipalsum2}, we have
\begin{equation} \label{eq:largeSieve12}
	E(\tau_k,\tau_k; X, d, a)
	= \frac{1}{\varphi(d)}
	\displaystyle\sideset{}{'}\sum_{\chi (\text{mod } d)}
	\overline{\chi}(a)
	\psi_1(\chi)
	\overline{\psi_2(\chi)}.
\end{equation}
Taking the square of the modulus of both sides yields
\begin{equation}
	E(\tau_k,\tau_k; X, d, a)^2
	=\frac{1}{\varphi(d)^2}
	\displaystyle\sideset{}{'}\sum_{\chi_1 (\text{mod } d)}
	\overline{\chi_1}(a)
	\psi_1(\chi_1)
	\overline{\psi_2(\chi_1)}
	\displaystyle\sideset{}{'}\sum_{\chi_2 (\text{mod } d)}
	\chi_2(a)
	\overline{\psi_1(\chi_2)}
	\psi_2(\chi_2).
\end{equation}
Summing over primitive $a (\text{mod }d)$ and changing the order of summation, we get
\begin{equation}
	\displaystyle\sideset{}{^*}\sum_{a (\text{mod } d)}
	E(\tau_k,\tau_k; X, d, a)^2
	= \frac{1}{\varphi(d)^2}
	\displaystyle\sideset{}{'}\sum_{\chi_1 (\text{mod } d)}\
	\displaystyle\sideset{}{'}\sum_{\chi_2 (\text{mod } d)}
	\psi_1(\chi_1)
	\overline{\psi_2(\chi_1)}
	\overline{\psi_1(\chi_2)}
	\psi_2(\chi_2)
	\displaystyle\sideset{}{^*}\sum_{a (\text{mod } d)}
	\overline{\chi_1}(a) \chi_2(a).
\end{equation}
By the orthogonality relation \eqref{eq:928} we get
\begin{equation}
	\displaystyle\sideset{}{^*}\sum_{a (\text{mod } d)}
	E(\tau_k,\tau_k; X, d, a)^2
	= \frac{1}{\varphi(d)}
	\left|
	\displaystyle\sideset{}{'}\sum_{\chi (\text{mod } d)}
	\psi_1(\chi) \overline{\psi_2(\chi)} \right|^2.
\end{equation}
We now reduce to primitive characters. By Lemma \ref{lemma:primitivesum}, we have
\begin{equation}
	\sum_{d\le D}\
	\displaystyle\sideset{}{^*}\sum_{a (\text{mod } d)}
	E(\tau_k,\tau_k; X, d, a)^2
	\ll 
	\log \mathcal{L}
	\sum_{r\le D}
	\frac{1}{r}
	\left(
	\sum_{1<q\le D/r}
	\frac{1}{q}
	\left|\
	\displaystyle\sideset{}{^*}\sum_{\chi (\text{mod } q)}
	\psi_1(\chi) \overline{\psi_2(\chi)}\
	\right|^2
	\right).
\end{equation}
By Lemma \ref{lemma:tinymoduli}, we get, for $1< q\le X^{1/3(k+2)}$,
\begin{equation} \label{eq:pairOfTau_k3}
	\sum_{1<q\le X^{1/3(k+2)}}
	\frac{1}{q}
	\left|\
	\displaystyle\sideset{}{^*}\sum_{\chi (\text{mod } q)}
	\psi_1(\chi) \overline{\psi_2(\chi)}\
	\right|^2
	\ll \sum_{1<q\le X^{1/3(k+2)}}
	\frac{1}{q}
	\left(
	\varphi(q) X^{2-\frac{2}{3(k+2)}}
	\right)^2
	\ll X^{4-\frac{2}{3(k+2)}}.
\end{equation}
Assume $X^{1/3(k+2)} \ll Q\ll D$. It suffices then to show, for each fixed $r\le D$,
\begin{equation}
	\frac{1}{Q}
	\sum_{q\sim Q}
	\left| \
	\displaystyle\sideset{}{^*}\sum_{\chi (\text{mod } q)}
	\psi_1(\chi) \overline{\psi_2(\chi)}\
	\right|^2
	\ll X^{4-1/3(k+3)}.
\end{equation}
By the large sieve inequality \eqref{eq:largesieveineq} and the bound \eqref{eq:lemma81}, the left-side of the above is
\begin{equation} \label{eq:largeSieve10}
	\le \frac{1}{Q}
	\sum_{q\sim Q}\
	\displaystyle\sideset{}{^*}\sum_{\chi (\text{mod } q)}
	|\psi_1(\chi) \overline{\psi_2(\chi)}|^2
	\ll \frac{1}{Q}(Q^2 + X^2)
	\left(\sum_{n \le X} \tau_k(n) \right)^2
	\ll \left(Q + \frac{X^2}{Q} \right) X^2 \mathcal{L}^{2k-2}.
\end{equation}
For $D\le X^{2- 1/3(k+2)}$, the above is
\begin{equation}
	\ll X^{2-1/3(k+2)} + X^{2-1/3(k+2)} 
	X^{2} \mathcal{L}^{2k} 
	\ll X^{4-1/3(k+3)}.
\end{equation}
This, together with \eqref{eq:pairOfTau_k3}, lead to the right side of \eqref{eq:pairOfTau_k}. 
Note also that, for $X^{2-1/3(k+2)} < D \le X^2$, \eqref{eq:largeSieve10} becomes
\begin{equation}
	\left(D + \frac{X^2}{X^{2-1/3(k+2)}} \right)
	X^{2} \mathcal{L}^{2k-2} 
	\ll DX^{2} \mathcal{L}^{2k-2}.
\end{equation}
This gives an estimate for \eqref{eq:pairOfTau_k} in this range.

We next prove \eqref{eq:pairOfTau_k2}. Let $f_1=\tau_k$ and $f_2=\Lambda$. The proof of \eqref{eq:pairOfTau_k2} is analogous to that of \eqref{eq:pairOfTau_k}, except that in \eqref{eq:pairOfTau_k3} and \eqref{eq:largeSieve10} we estimate the sum over $\Lambda$ by
\begin{equation}
	\psi_2(\chi)
	= \sum_{n\le X} \Lambda(n) \chi(n)
	\ll X
	\quad \text{and}\quad
	\sum_{n\le X} \Lambda(n) \ll X.
\end{equation}
Thus \eqref{eq:pairOfTau_k3} becomes
\begin{equation} \label{eq:pairOfTau_k4}
	\sum_{1<q\le X^{1/3(k+2)}}
	\frac{1}{q}
	\left|\
	\displaystyle\sideset{}{^*}\sum_{\chi (\text{mod } q)}
	\psi_1(\chi) \overline{\psi_2(\chi)}\
	\right|^2
	\ll X^{4-\frac{1}{3(k+2)}},
\end{equation}
and \eqref{eq:largeSieve10} becomes
\begin{equation}
	\frac{1}{Q}
	\sum_{q\sim Q}\
	\displaystyle\sideset{}{^*}\sum_{\chi (\text{mod } q)}
	|\psi_1(\chi) \overline{\psi_2(\chi)}|^2
	\ll \frac{1}{Q}(Q^2 + X^2)
	\left(\sum_{n \le X} \tau_k(n) \right)
	\left(\sum_{n \le X} \Lambda(n) \right)
	= \left(Q + \frac{X^2}{Q} \right) X^2 \mathcal{L}^{k-1},
\end{equation}
both of which are admissible for \eqref{eq:pairOfTau_k2}. This concludes the proof of Theorem \ref{thm:pairOfTau_k}.

\bigskip
\textbf{Added in proof.} Fouvry has informed us of recent work with Radziwi\l{}\l{} \cite{FouvryRaziwill2018}, which is now to appear in ``Annales Scientifiques de l'Ecole Normale Sup\'erieure", in which the authors proved in Corollary 1.2 that for fixed integer $k\ge 1$ and $\epsilon>0$, one has
\begin{equation}
	\sum_{\substack{Q\le q\le 2Q\\ (q,a)=1}}
	\left|
	\sum_{\substack{X < n\le 2X\\ n\equiv a (\text{mod } q)}} \tau_k(n)
	- \frac{1}{\varphi(q)} \sum_{\substack{X< n\le 2X\\ (n,q)=1 }} \tau_k(n)
	\right|
	\ll \frac{X}{(\log X)^{1-\epsilon}}
\end{equation}
uniformly for $X\ge 2$, $Q\le X^{\frac{17}{33} - \epsilon}$ and $1\le |a| \le X/12$.

\end{document}